\newtheorem{theorem}{Theorem}
\newtheorem{proposition}[theorem]{Proposition}
\newtheorem{definition}{Definition}
\newtheorem{lemma}{Lemma}
\newtheorem{remark}{Remark}  
\def\footnote{\@ifnextchar[{\@xfootnote}{\stepcounter{\@mpfn}\xdef\@thefnmark{\thempfn}\@footnotemark\@footnotetext}}
\newcommand{\ud}{\mathrm{d}}
\newcommand{\1}{\mathbf{1}}
\newcommand{\tb}{t_{\mathbf{b}}}
\newcommand{\xb}{x_{\mathbf{b}}}
\newcommand{\e}{\varepsilon}
\newcommand{\R}{\mathbb{R}}
\newcommand{\p}{\partial}
\begin{document}

\title{BV-regularity of the Boltzmann equation\\ in Non-Convex Domains} \author{Y. Guo, C. Kim, D. Tonon, A. Trescases
}
\date{}
\maketitle

\footnotetext{
Last update : \today}
 \begin{abstract}
Consider the Boltzmann equation in a general non-convex domain with the diffuse boundary condition. We establish optimal BV estimates for such solutions. Our method consists of a new $W^{1,1}-$trace estimate for the diffuse boundary condition and a delicate construction of and an $\varepsilon-$tubular neighborhood of the singular set. 
\end{abstract} 

\section{Introduction}

Boundary effects play an important role in the dynamics of Boltzmann solutions for 
\begin{equation}\label{Boltzmann}
\partial_{t} F + v\cdot \nabla_{x} F = Q(F,F),
\end{equation}
where $F(t,x,v) \geq 0$ denotes the particle distribution in the phase space $\Omega\times\mathbb{R}^{3}$. Throughout this paper, the collision operator takes the form 
\begin{equation}\label{collision_Q}
\begin{split}
Q(F_{1},F_{2})
&:=Q_{\text{gain}}(F_{1},F_{2})-Q_{\text{loss%
}}(F_{1},F_{2})\\
&=\int_{\mathbb{R}^{3}}\int_{\mathbb{S}^{2}}|v-u|^{\kappa
}q_{0}(\theta )\Big[F_{1}(u^\prime)F_{2}(v^\prime)-F_{1}(u)F_{2}(v)\Big]%
\mathrm{d}\omega \mathrm{d}u,
\end{split}
\end{equation}
where $u^\prime=u+[(v-u)\cdot \omega ]\omega ,\ v^\prime=v-[(v-u)\cdot
\omega ]\omega $ and $0\leq \kappa \leq 1$ (hard potential) and $0\leq
q_{0}(\theta )\leq C|\cos \theta |$ (angular cutoff) with $\cos \theta =%
\frac{v-u}{|v-u|}\cdot \omega $. We denote the global Maxwellian
\[
\mu(v) = \exp \Big( - \frac{|v|^{2}}{2}\Big).
\]

Throughout this paper we assume that $\Omega$ is a bounded open subset of $\mathbb{R}^{3}$. The boundary $\partial\Omega$ is locally a graph of a given $C^{2}$ function: for each point $x_{0} \in \partial\Omega$ there exist $r>0$ and a $C^{2}$ function $\eta: \mathbb{R}^{2} \rightarrow \mathbb{R}$ such that, upto a rotation and relabeling, we have
\begin{equation}\label{boundary_graph}
\begin{split}
\partial \Omega \cap B(x_{0}; r) = \big\{ x \in B(x_{0}; r) : x_{3} = \eta (x_{1},x_{2}) \big\},\\
  \Omega \cap B(x_{0}; r) = \big\{ x \in B(x_{0}; r) : x_{3} > \eta (x_{1},x_{2}) \big\}.
 \end{split}
\end{equation}

The boundary of the phase space $\Omega\times\mathbb{R}^{3}$ is
\begin{equation}\label{phase_bdry}
\gamma:= \{(x,v) \in\partial\Omega \times\mathbb{R}^{3}\}.
\end{equation}
We denote $n=n(x)$ the outward normal direction at $x\in \partial \Omega$. 
 We decompose $\gamma$ as 
\begin{equation}\notag
\begin{split}
\gamma
_{-}&=\{(x,v)\in \partial \Omega \times \mathbb{R}^{3}: n({x})\cdot v<0\}, \ \ \  \ \ \ \ \ \ (\textit{the incoming set}),\\
\gamma
_{+}&=\{(x,v)\in \partial \Omega \times \mathbb{R}^{3}: n({x})\cdot v>0\}, \ \ \  \ \ \ \ \ \ (\textit{the outgoing set}),\\
\gamma _{0}&=\{(x,v)\in \partial \Omega \times \mathbb{R}^{3}:  n({x}%
) \cdot v=0\} , \ \ \  \ \ \ \ \ \ (\textit{the grazing set}).
\end{split}
\end{equation}

In general the boundary condition is imposed only for the incoming set $\gamma_{-}$ for general kinetic PDEs. We consider \textit{the diffuse boundary condition} in this paper: {for $(x,v)\in\gamma_{-}$}
\begin{equation}\label{diffuseBC_F}
F(t,x,v)=c_{\mu }\mu (v)\int_{n(x)\cdot  u>0}F(t,x,u)\{n(x)\cdot u\}\mathrm{d}u,
\end{equation}%
with $c_{\mu }\int_{n(x)\cdot u>0}\mu (u)\{n(x)\cdot
u\}\mathrm{d}u=1$.

Despite extensive developments in the study of the Boltzmann equation, many basic questions regarding solutions in a physical bounded domain, such as their regularity, have remained largely open. This is partly due to the characteristic nature of boundary conditions in {kinetic theory}: Consider the simple transport equation $v\cdot \nabla_{x} f(x,v)=0$ with the given boundary condition $f|_{\gamma_{-}}=g.$ Then we solve $f(x,v) = g(x_{\mathbf{b}}(x,v),v)= g(x-t_{\mathbf{b}}(x,v)v,v)$ where $\tb(x,v)$ is the \textit{backward exit time} defined as
\begin{equation}
\begin{split}
\tb(x,v)  &:=\sup (  \{0\} \cup   \{\tau >0:x-sv\in \Omega \text{ for all }0<
s < \tau \}),
\\ \xb(v)&:=x-\tb(x,v)v.   \label{backexit}
\end{split}
\end{equation} 
{Similarly the \textit{forward exit time} $t_{\mathbf{f}}$ is defined as}
\begin{equation}\label{forwardexit}
\begin{split}
t_{\mathbf{f}}(x,v)&:=\sup (  \{0\} \cup   \{\tau >0:x+sv\in \Omega \text{ for all }0<
s < \tau \}),\\
x_{ \mathbf{f}}(v)&:=x+t_{\mathbf{f}}(x,v)v.  
\end{split}
\end{equation} 
Since $x_{\mathbf{b}}(x,v)$ has singular behavior (even not continuous) if $n(x_{\mathbf{b}}(x,v))\cdot v=0${, we expect $f$} might be singular on \textit{the singular set}: 
\begin{equation}\label{singular_set}
\mathfrak{S}_{\mathrm{B}} : = \{  (x,v) \in  \bar{\Omega} \times \mathbb{R}^{3} :  n(x_{\mathbf{b}} (x,v)) \cdot v=0  \}, 
\end{equation}
which is the collection of all the characteristics emanating from the grazing set $\gamma_{0}$. 

In \cite{Guo10}, it is shown that in convex domains, Boltzmann solutions are continuous away from the grazing set $\gamma_{0}$. On the other hand, in \cite{Kim11}, it is shown that the singularity
(discontinuity) does occur for Boltzmann solutions in a non-convex domain,
and such singularity propagates along the singular set $\mathfrak{S}_{\mathrm{B}}$. Very recently the authors were able to establish weighted $C^{1}$ estimates in convex domains for all basic boundary conditions. The main purpose of this paper is to establish the first BV regularity estimate for the Boltzmann solution in non-convex {domains}.

\vspace{4pt}

{We denote $||\cdot||_{\infty}$ the $L^{\infty}(\bar{\Omega} \times \mathbb{R}^{3})$ norm, while $|| \cdot ||_{p}$ is the $L^{p}(\Omega\times\mathbb{R}^{3})$ norm. We denote $|\cdot|_{p}$ the $L^{p} (\partial\Omega\times \mathbb{R}^{3},\mathrm{d}S_{x} \mathrm{d}v)$ and $|\cdot|_{\gamma,p}$ the $L^{p} (\partial\Omega\times \mathbb{R}^{3} )=L^{p} (\partial\Omega\times \mathbb{R}^{3},\mathrm{d}\gamma)$ norm where $\mathrm{d}\gamma = |n(x)\cdot v| \mathrm{d}S_{x} \mathrm{d}v$ with the surface measure $\mathrm{d}S_{x}$ on $\partial\Omega.$ We write $|\cdot|_{\gamma_\pm,p} = |\cdot \mathbf{1}_{\gamma_{\pm}}|_{\gamma,p}$. For a function $f$ on $\Omega\times\mathbb{R}^{3}$, we denote $f_\gamma$ to be its trace on $\gamma$ whenever it exists.}

A function $f \in L^1(\Omega\times \mathbb{R}^{3})$ has \textit{bounded variation} in $\Omega\times\mathbb{R}^{3}$ if
\[
\sup\Big\{ \iint_{\Omega\times\mathbb{R}^{3}} f \mathrm{div} \varphi \ud x \ud v : \varphi \in C^1_{c}(\Omega\times \mathbb{R}^{3};\mathbb{R}^3 \times\mathbb{R}^{3}), \ |\varphi| \leq 1
\Big\} < \infty.
\]
We define
\[
|| f ||_{BV} := || f||_{L^1(\Omega)} + ||  f ||_{\tilde{BV}},
\]
where
\[
||  f ||_{\tilde{BV}} := \sup\Big\{ \iint_{\Omega \times \mathbb{R}^{3}}f \mathrm{div} \varphi \ud x \ud v: \varphi \in C^1_{c}(\Omega\times \mathbb{R}^{3};\mathbb{R}^3 \times\mathbb{R}^{3}), \ |\varphi| \leq 1
\Big\} < \infty.
\]
%

\begin{theorem}\label{main_theorem}
Let $\Omega$ be a bounded open subset of $\mathbb{R}^{3}$ with $C^{2}$ boundary $\partial\Omega$ as in (\ref{boundary_graph}). Assume that $0 \leq \kappa \leq 1$ in (\ref{collision_Q}), $F_{0} = \sqrt{\mu}f_{0} \geq 0,$ $f_{0}\in BV(\Omega\times\mathbb{R}^{3}),$ and $|| e^{\theta |v|^{2}} f_{0} ||_{\infty} < + \infty$ for $0< \theta < \frac{1}{4}$. Then there exists $T=T(|| e^{\theta |v|^{2}} f_{0} ||_{\infty})>0$ such that $ F =\sqrt{\mu} f$ solves the Boltzmann equation (\ref{Boltzmann}) with the diffuse boundary condition (\ref{diffuseBC_F}) and $f \in L^{\infty} ([0,T]; BV(\Omega\times \mathbb{R}^{3}))$ and $\nabla_{x,v}f  \mathrm{d}\gamma$ is a Radon measure on $\partial\Omega \times \mathbb{R}^{3}$. 

Moreover, for all $0 \leq t \leq T$, 
\begin{equation}\label{est_main}
\begin{split}
||  f(t) ||_{BV} 
 \ \lesssim_{t,\Omega} \  ||   f_{0} ||_{BV} + P (|| e^{\theta |v|^{2}}  f_{0} ||_{\infty}),
\end{split}
\end{equation}
for some polynomial $P$ and $\nabla_{x,v}f_{\gamma}$ is a Radon measure $\sigma$ on $\partial\Omega \times \mathbb{R}^{3}$ such that $|\sigma(\partial\Omega\times \mathbb{R}^{3})| \lesssim ||   f_{0} ||_{BV} + P (|| e^{\theta |v|^{2}}  f_{0} ||_{\infty}). $
\end{theorem}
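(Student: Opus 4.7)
The plan is to combine a Duhamel iteration scheme with the two technical innovations announced in the abstract: a $W^{1,1}$-trace inequality for the diffuse reflection operator and an $\varepsilon$-tubular neighborhood construction around the singular set $\mathfrak{S}_{\mathrm{B}}$. First I would build approximate solutions $f^{\ell+1}$ from $f^{\ell}$ by solving the linear kinetic transport equation with nonlinear source $\Gamma_{\mathrm{gain}}(f^{\ell},f^{\ell})-\nu(f^{\ell})f^{\ell+1}$ and diffuse boundary condition (\ref{diffuseBC_F}). On a short time interval $[0,T]$ with $T=T(||e^{\theta|v|^{2}}f_{0}||_{\infty})$, the standard $L^{\infty}$ theory for diffuse reflection yields $||e^{\theta|v|^{2}}f^{\ell}(t)||_{\infty}\lesssim P(||e^{\theta|v|^{2}}f_{0}||_{\infty})$ uniformly in $\ell$, a pointwise bound that will later absorb the nonlinear collision contribution.

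Next I establish a uniform $BV$ bound from the characteristic representation. When $t<\tb(x,v)$, $f^{\ell+1}(t,x,v)$ equals $f_{0}(x-tv,v)$; otherwise it equals the $\gamma_{-}$-trace of itself at $(t-\tb,\xb,v)$; in both cases one adds a Duhamel integral of $\Gamma$. Testing against $\operatorname{div}\varphi$, the $\tilde{BV}$ seminorm decomposes into three pieces: the push-forward of $\nabla f_{0}$ (linear in $||f_{0}||_{BV}$, provided one stays away from $\mathfrak{S}_{\mathrm{B}}$), the Duhamel collisional term (controlled by a Gronwall loop using the $L^{\infty}$ bound), and the boundary contribution. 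The last is handled by the new $W^{1,1}$-trace estimate: differentiating the diffuse formula, the $v$-derivative falls on $\sqrt{\mu(v)}$ (harmless) while the $x$-derivative falls on $n(x)$ and on the integrated $\gamma_{+}$-trace, producing a bound of the type $|\nabla_{x,v}f^{\ell+1}|_{\gamma_{-},1}\lesssim|f^{\ell+1}|_{\gamma_{+},1}$ which, combined with the usual $BV$-trace theorem, closes the loop and defines the Radon measure $\sigma$.

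The heart of the proof, and the main obstacle, is the treatment of the singular set. The Jacobians $\nabla_{x,v}\tb$ and $\nabla_{x,v}\xb$ contain a factor $1/(n(\xb)\cdot v)$ and so are not integrable up to $\mathfrak{S}_{\mathrm{B}}$. Using the local graph representation (\ref{boundary_graph}) and the $C^{2}$ curvature of $\partial\Omega$, I would carve out an explicit $\varepsilon$-tubular neighborhood $\mathcal{N}_{\varepsilon}\supset\mathfrak{S}_{\mathrm{B}}$ of Lebesgue measure $O(\varepsilon^{\alpha})$ for some $\alpha>0$, outside of which $|n(\xb)\cdot v|\gtrsim\varepsilon$. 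On $(\Omega\times\mathbb{R}^{3})\setminus\mathcal{N}_{\varepsilon}$ the previous paragraph's estimates run with $\varepsilon^{-1}$-polynomial dependence on the Jacobian; on $\mathcal{N}_{\varepsilon}$ one uses only $L^{\infty}$, exploiting that $\iint_{\mathcal{N}_{\varepsilon}}f^{\ell+1}\operatorname{div}\varphi\,\ud x\,\ud v\lesssim|\mathcal{N}_{\varepsilon}|\cdot||f^{\ell+1}||_{\infty}=O(\varepsilon^{\alpha})$. Balancing $\varepsilon$, or passing to $\varepsilon\to0$ via lower semicontinuity of $||\cdot||_{\tilde{BV}}$, produces the uniform $BV$ bound. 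I expect the most delicate point to be the quantitative measure estimate $|\mathcal{N}_{\varepsilon}|=O(\varepsilon^{\alpha})$: in a non-convex domain the grazing set $\gamma_{0}$ can be geometrically intricate, and one must carefully parametrize through the boundary chart $\eta$ those characteristics passing within distance $\varepsilon$ of a grazing point and show they foliate a sufficiently thin $(x,v)$-slab.

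Finally, the uniform $BV$ and $L^{\infty}$ bounds together give strong $L^{1}$-compactness of $\{f^{\ell}\}$, sufficient to pass to the limit in $\Gamma(f^{\ell},f^{\ell+1})$. Because the diffuse boundary operator is linear in $f$, the $W^{1,1}$-trace inequality persists in the limit, yielding the solution $f\in L^{\infty}([0,T];BV(\Omega\times\mathbb{R}^{3}))$ with the estimate (\ref{est_main}) together with the Radon measure $\sigma$ on $\partial\Omega\times\mathbb{R}^{3}$ satisfying $|\sigma(\partial\Omega\times\mathbb{R}^{3})|\lesssim||f_{0}||_{BV}+P(||e^{\theta|v|^{2}}f_{0}||_{\infty})$.
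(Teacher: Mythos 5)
Your outline correctly identifies the two global ingredients (an $\varepsilon$-tubular cut-off near $\mathfrak{S}_{\mathrm{B}}$ and a trace-type control of the diffuse boundary), but the way you propose to combine them has two genuine gaps, and a third point where you underestimate the hard part.

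First, the estimate $\iint_{\mathcal{N}_\varepsilon} f^{\ell+1}\,\mathrm{div}\,\varphi\,\mathrm{d}x\,\mathrm{d}v \lesssim |\mathcal{N}_\varepsilon|\cdot \|f^{\ell+1}\|_\infty$ is false: the $BV$ seminorm takes a supremum over $\varphi\in C^1_c$ with $|\varphi|\leq 1$, which does not constrain $\mathrm{div}\,\varphi$ at all. Since the solution is expected to have an $O(1)$ jump across the co-dimension 1 set $\mathfrak{S}_{\mathrm{B}}$ living inside $\mathcal{N}_\varepsilon$, the contribution of that region to the $\tilde{BV}$ seminorm is $O(1)$, not $O(\varepsilon^\alpha)$; it simply cannot be made small. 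The paper avoids this altogether by inserting a smooth cut-off $\chi_\varepsilon$ (a mollification of $\mathbf{1}_{\bar\Omega\times\mathbb{R}^3\setminus\mathcal{O}_{\varepsilon,C_*\varepsilon}}$) directly into the iteration (\ref{ep_m}) — in the initial data, the source, and the boundary condition — so that $f^{\varepsilon,m}$ vanishes identically near $\mathfrak{S}_{\mathrm{B}}$ and is actually smooth, and then controls $\|\partial f^{\varepsilon,m}\|_1$ uniformly in $\varepsilon$. The quantity that must be controlled is therefore not $|\mathcal{N}_\varepsilon|$ but rather $\|\nabla\chi_\varepsilon\|_{L^1}$ and, crucially, $|\nabla\chi_\varepsilon|_{\gamma_-,1}$; the latter is a surface integral and does \emph{not} follow from a bulk measure bound $|\mathcal{O}_\varepsilon|=O(\varepsilon)$. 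This is where the genuinely delicate geometric fact enters: Lemma \ref{cone_lemma} shows that $|n(x)\cdot v|\lesssim\sqrt{\varepsilon}$ on $\mathcal{O}_\varepsilon\cap\gamma$, which — not the bulk measure — is what makes $|\partial\chi_\varepsilon|_{\gamma_-,1}\lesssim 1$ possible despite infinitely many grazing rays through a given boundary point.

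Second, your claim that the boundary contribution is closed by ``$|\nabla_{x,v}f^{\ell+1}|_{\gamma_-,1}\lesssim|f^{\ell+1}|_{\gamma_+,1}$'' combined with ``the usual $BV$-trace theorem'' is too optimistic. Differentiating the diffuse condition gives $|\partial f^{\ell+1}|_{\gamma_-}\lesssim|\partial\chi_\varepsilon| + \frac{1}{|n\cdot v|}\int_{n\cdot u>0}|\partial f^{\ell}|\,\{n\cdot u\}\mathrm{d}u$, so the $\gamma_-$ trace is controlled by the $\gamma_+$ trace of the \emph{derivative} with a multiplicative constant $C$ that is not small. The outgoing trace theorem (Lemma \ref{le:ukai}) controls only the non-grazing part $\gamma_+\setminus\gamma_+^\delta$ by the bulk; the almost-grazing contribution $\gamma_+^\delta$ cannot be absorbed by Green's identity if $C>1$. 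The key device in the paper is the \emph{double iteration} (Proposition \ref{double_iteration}): apply Duhamel once more along characteristics, change variables $(x,v)\mapsto(x_{\mathbf{b}}(x,v),v)$ via Lemma \ref{COV}, and use the covering Lemma \ref{guo_covering} to show that $\sup_{x\in\partial\Omega}\int \mathbf{1}_{(x_{\mathbf{b}}(x,-v),v)\in\gamma_+^\delta}\mu^{1/2}\,\mathrm{d}v$ can be made small, yielding the small prefactor $O(\delta)$ in (\ref{L1trace}) needed to close the loop. Your proposal contains no analogue of this step, and without it the boundary estimate does not close.
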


We remark that {the result holds even without} any size restriction for the initial datum. On the other hand, if $|| e^{\theta |v|^{2}} g_{0} ||_{\infty}\ll1$ for $F_{0 }= \mu + \sqrt{\mu} g_{0}\geq 0$, then Theorem \ref{main_theorem} holds for $g(t)$ for all $t\geq 0.$ Moreover the $BV$ regularity (even in the bulk) is the best regularity we can expect. The reason is that in general the singular set $\mathfrak{S}_{\mathrm{B}}$ is a co-dimension 1 subset in the phase space $\Omega\times\mathbb{R}^{3}$.

\begin{remark} Assume that the domain $\Omega$ is non-convex: there exists at least one point $x_{0}\in\partial\Omega$ and $u \in \mathbb{R}^{3}$ and $(u_{1},u_{2}) \neq 0$ such that (\ref{boundary_graph}) and 
\begin{equation}\label{nonconvex}
\sum_{i,j=1,2}  u_{i} u_{j} \partial_{i} \partial_{j} \eta(x_{0}) < 0,   \ \ \  \ \ \ \ \ \ (\textit{strictly non-convex point}).
\end{equation}
Then the singular set $\mathfrak{S}_{\mathrm{B}}$ is a co-dimension 1 subset of $\Omega \times \mathbb{R}^{3}$. Moreover if we restrict the singular set to the characteristics emanating from the strictly non-convex points
\[
\big\{ (x,v) \in \mathfrak{S}_{\mathrm{B}}:  (x_{\mathbf{b}}(x,v),v) \text{ is a strictly non-convex point} \big\},
\]
then this set is a co-dimension 1 submanifold of $\Omega\times\mathbb{R}^{3}.$
\end{remark}
We put the proof of Remark 1 in the appendix. Since discontinuous solutions were constructed for non-convex {domains} in \cite{Kim11}, this remark shows that the Boltzmann solutions are singular on the co-dimensional 1 subset $\mathfrak{S}_{\mathrm{B}}$. Then it is standard to conclude that the best possible regularity space is indeed the $BV$ space. Hence Theorem 1 is optimal.

\vspace{4pt}

The equation for $f= F/ \sqrt{\mu}$ where $F$ solves (\ref{Boltzmann}) is
\begin{equation}
 \p_{t} f +v\cdot \nabla_{x} f + \nu(\sqrt{\mu}f)f  =  \Gamma_{\mathrm{gain}}(f,f), \ \ \ \ \text{ in } \Omega\times\mathbb{R}^{3}, \label{eqnt_F}\\
 \end{equation}
 where  
\begin{equation}\label{Gamma}
\Gamma_{\mathrm{gain}}(f_{1},f_{2}) := \frac{1}{\sqrt{\mu}} Q_{\mathrm{gain}}(\sqrt{\mu}f_{1},\sqrt{\mu}f_{2}), \ \ \ \nu(\sqrt{\mu}f_{1})f_{2}= \frac{1}{\sqrt{\mu}} Q_{\mathrm{loss}}(\sqrt{\mu}f_{1},\sqrt{\mu}f_{2})   
. 
\end{equation}
The boundary condition for $f= F/ \sqrt{\mu}$ where $F$ satisfies (\ref{diffuseBC_F}) is
 \begin{equation}
f(t,x,v)   =  c_{\mu} \sqrt{\mu(v)}  \int_{n(x)\cdot u>0} f(t,x,u) \sqrt{\mu(u)} \{n(x)\cdot u\} \mathrm{d}u, \ \ \ \ \text{ on } (x,v)\in \gamma_{-}.\label{bdry_f}
\end{equation} 
The local-in-time existence {of the solution $f$} with $\sup_{0 \leq t \leq T}||e^{\theta |v|^{2}} f(t) ||_{\infty} \lesssim ||e^{\theta^{\prime} |v|^{2}} f_{0} ||_{\infty}$ for $0< \theta < \theta^{\prime} < \frac{1}{4}$ is standard (e.g. Lemma 6 in \cite{GKTT}). 

We now illustrate the main ideas of {the proof }of Theorem 1. For simplicity we assume that $f$ satisfies (\ref{bdry_f}) but solves the following simpler problem 
\begin{equation}\label{transport}
\partial_{t} f + v\cdot \nabla_{x} f  + \nu f = H, \ \ \ \ f|_{t=0} =f_{0},
\end{equation}
with the boundary condition (\ref{bdry_f}){, and} where $\nu= \nu(t,x,v)\geq 0,  \ H,$ and $\nu$ are smooth enough. In general solutions $f$ of (\ref{transport}) are discontinuous on $\mathfrak{S}_{\mathrm{B}}$ and (distributional) derivatives do not exist \cite{Kim11}. 

{To take} (distributional) derivatives we consider the following problem with
some smooth cut-off function $\chi_{\varepsilon}(x,v)$ vanishing on an open neighborhood of $\mathfrak{S}_{\mathrm{B}}$:  
\begin{equation}\label{e-transport}
\begin{split}
 \partial_{t} f^{\varepsilon} + v\cdot \nabla_{x} f ^{\varepsilon} + \nu f^{\varepsilon}  = \chi_{\varepsilon}H \ \ \ &\text{in} \ (x,v) \in \Omega \times \mathbb{R}^{3},\\
f^{\varepsilon}|_{t=0} =\chi_{\varepsilon}f_{0} \ \ \ &\text{in} \ (x,v) \in \Omega \times \mathbb{R}^{3},\\
 f^{\varepsilon}(t,x,v)    = \chi_{\varepsilon} c_{\mu} \sqrt{\mu(v)}  \int_{n(x)\cdot u>0} f^{\varepsilon}(t,x,u) \sqrt{\mu(u)} \{n(x)\cdot u\} \mathrm{d}u, \ \ \ & \text{on} \  (x,v)\in \gamma_{-}.
\end{split}
\end{equation} 
Once we can {show} that $f^{\e}$ is uniformly bounded in $L^{\infty}$ and $\partial f^{\e}$ is uniformly bounded {in $L^1(\Omega\times\R^3)$} then we conclude that $f^{\e}$ converges to $f$ weak$-*$ in $L^{\infty}$ and $f\in BV$ solves (\ref{transport}) with (\ref{bdry_f}). 

Due to the cut-off $\chi_{\varepsilon}$, the solution of (\ref{e-transport}) $f^{\varepsilon}$ vanishes on some open subset of $\bar{\Omega} \times \mathbb{R}^{3}$ containing the singular set $\mathfrak{S}_{\mathrm{B}}$ {defined in} (\ref{singular_set}). Therefore $f^{\varepsilon}$ is smooth. We {apply} (distributional) derivatives $\partial \in \{ \nabla_{x}, \nabla_{v} \}$ to the equation and have {
\[
|\partial_{t} \partial f^{\varepsilon} + v\cdot \nabla_{x} \partial f^{\varepsilon} + \nu \partial f^{\varepsilon}| \leq |\partial f^{\varepsilon}| + |\partial \nu f^{\varepsilon}| + |\partial \chi_{\varepsilon} H| + |\chi_{\varepsilon} \partial H|.
 \]
 }
On the other hand at the boundary we use an orthonormal transformation $\mathcal{T}$ in order to remove a $x-$dependence of the integration range: $\{n(x)\cdot u>0\} \mapsto \{  (\mathcal{T}^{-1}u)_{3}>0 \}$ (see $(17)\sim(21)$ in \cite{GKTT}). Then derivatives of the boundary terms are bounded as {in \cite{GKTT}: }
\[
|\partial f^{\varepsilon}|\sim |\partial \chi_{\varepsilon}|     + \frac{1}{|n\cdot v|} \int_{n\cdot u>0} |\partial f^{\varepsilon}|   \{n\cdot u\} \mathrm{d}u+ \cdots, \ \ \ \  \text{ on }{\gamma_{-}}.
\]

We then apply the energy-type estimate (Green's identity, Lemma \ref{Green}) and the above boundary control to have
\begin{eqnarray}\notag 
&&|| \partial f^{\varepsilon}(t) ||_{1}  + \int_{0}^{t} | \partial f^{\varepsilon}  |_{\gamma_+,1}   
 + \int_{0}^{t} || \nu \partial f^{\varepsilon}(s) || \mathrm{d}s 
\\
  &  \lesssim& \ ||  \partial \chi_{\varepsilon}  f({0}) ||_{1} + \int_{0}^{t} |\partial f^{\varepsilon} |_{\gamma_-,1}  + \int_{0}^{t} ||  \partial \chi_{\varepsilon} H  ||_{1}  + \  \text{``good terms''} \notag\\
  &  \lesssim_{t} & \   \underbrace{|| \partial \chi_{\varepsilon}  ||_{1} +  | \partial \chi_{\varepsilon}   |_{\gamma_{-},1}}_{(A)}  \ + \ \underbrace{C\int_{0}^{t}  
    |\partial f^{\varepsilon} |_{\gamma_{+},1}}_{(B)}  \  + \ \text{``good terms''}. \notag
 \end{eqnarray}

The first main difficulty is to construct a cut-off function $\chi_{\varepsilon}$ such that it vanishes on an open neighborhood of $\mathfrak{S}_{\mathrm{B}}$ and makes $(A)$ be finite at the same time.

We carefully construct, in Lemma \ref{open_cover}, an open neighborhood $\mathcal{O}_{\e}$ of $\mathfrak{S}_{\mathrm{B}}$. More precisely $\mathcal{O}_{\e}$ is a collection of $\e-$tubular neighborhood of forward trajectories emanating from the grazing set $\gamma_{0}$. Also we can show that $\mathcal{O}_{\e}$ contains all points whose distance from $\mathfrak{S}_{\mathrm{B}}$ is less than $\varepsilon$. 
Such $\varepsilon-$thickness is important for constructing cut-off functions. In fact we construct cut-off functions $\chi_{\varepsilon}$ by standard $\varepsilon-$mollifying function of the characteristic function $\mathbf{1}_{\bar{\Omega} \times \mathbb{R}^{3} \backslash   \mathcal{O}_{\e}}$. And the $\varepsilon-$thickness guarantees that the cut-off function vanishes around $\mathfrak{S}_{\mathrm{B}}$. 

Fortunately $\chi_{\varepsilon}$ satisfies the desired bound $(A) < \infty$, that is, $\chi_{\varepsilon}$ is uniformly bounded in $W^{1,1}$ (Lemma \ref{cut_off} and Lemma \ref{small_boundary_lemma}), whose proofs are delicate. Since $\chi_{\e}$ is a standard $\e-$mollification of $\mathbf{1}_{\bar{\Omega} \times \mathbb{R}^{3} \backslash \mathcal{O}_{\e}}$ we have $\partial \chi_{\e}\sim \partial [ 1- \chi_{\e} ] \sim \frac{1}{\e} (\frac{1}{\e^{6}} \mathbf{1}_{|x|+|v|<\e}) * \mathbf{1}_{\mathcal{O}_{\e}}$. For example a desired estimate for $|\partial \chi_{\e}|_{\gamma_{-},1}$ is
\[
\int_{(x,v) \in\gamma_{-}, \ |v|\lesssim1} \mathbf{1}_{\mathcal{O}_{ \varepsilon } } (x ,v ) |n(x) \cdot v|
 \mathrm{d}v \mathrm{d}S_{x} \sim \e.
\]
For fixed $x$, the velocity $v\in \mathcal{O}_{\e}$ is a collection of $\e-$tubular neighborhood of forward trajectories, which happens to pass near $x$ and emanates from $\gamma_{0}$. But there could be infinitely many grazing trajectories passing $x$, which might lead to
\begin{eqnarray*}
&&\int_{(x,v) \in\gamma_{-}, \ |v|\lesssim1} \mathbf{1}_{\mathcal{O}_{ \varepsilon } } (x ,v ) |n(x) \cdot v|
 \mathrm{d}v \mathrm{d}S_{x}\\
 & \sim&
 \{\text{number of grazing at $x$}\} \times 
 \int_{|v|\lesssim 1}  \mathbf{1}_{\e-\text{tubular neighborhood}}(v) |n(x)\cdot v|  \mathrm{d}v\\
 &\sim& \infty.
\end{eqnarray*}
 
  
%
  
Instead we establish a geometric Lemma \ref{cone_lemma} to show that $|n(x ) \cdot v| \lesssim \sqrt{\e}$ if $(x,v) \in \mathcal{O}_{\e}$. For the proof, we decompose $\mathcal{O}_{\e}$ carefully in position and velocity with varying grazing trajectories. We remark that $|\partial \chi_{\varepsilon} |_{\gamma_{+},1}<\infty$ may not be true in general.

 \bigskip
 
The second main difficulty is to control the outgoing term $(B)$. We denote \textit{the (outgoing) almost grazing set }
\begin{equation}\label{n-grazing}
\gamma_{+}^{\delta} := \{ (x,v) \in \gamma_{+} : v\cdot n(x) < \delta \ \text{or} \ |v|> 1/\delta \},
\end{equation}
and \textit{the (outgoing) non-grazing set}
\begin{equation}\label{non-grazing}
\gamma_{+} \backslash \gamma_{+}^{\delta}  = \{ (x,v) \in \gamma_{+} : v\cdot n(x) \geq  \delta \ \text{and} \ |v| \leq 1/\delta \}.
\end{equation}
In fact the $\gamma_{+}\backslash \gamma_{+}^{\delta}$ contribution can be controlled by the bulk integration and the initial data by the trace theorem. However {the }$\gamma_{+}^{\delta}$ contribution cannot be bounded by the bulk integration nor $\int_{0}^{t} | \partial f^{\varepsilon}  |_{\gamma_+,1}$ in the LHS of the energy-type estimate since the constant $C>0$ of $(B)$ can be large in general.
%

The new idea is to use the Duhamel formula along the trajectory once again (\textit{Double iteration scheme}) to extract an extra small constant to close the estimate. We evaluate $\partial f^{\e}$ along the characteristics and use the bound of $\partial f^{\e}$ {on $\gamma_{-}$} to have
\begin{eqnarray}
&& \int_{0}^{t} | \partial f^{\varepsilon}   |_{\gamma_+^{\delta},1} \notag \\
 &=& \int_{0}^{t} \int_{(x,v) \in \gamma_{+}^{\delta}} |\partial f^{\e} (s,x,v)|
  \{n(x)\cdot v\}  \mathrm{d}S_{x} \mathrm{d}v 
 \mathrm{d}s \notag
\\
 &\sim & \int_{0}^{t}\mathrm{d}s \int_{(x,v) \in \gamma_{+}^{\delta}}    |\partial f^{\varepsilon}(s-t_{\mathbf{b}}(x,v),x_{\mathbf{b}}(x,v),v)| \{n(x)\cdot v\} \mathrm{d}S_{x} \mathrm{d}v 
 \notag
 \\
&\sim  & 
\int_{0}^{t} \int_{(x,v) \in \gamma_{+}^{\delta}}   \{n(x)\cdot v\}   |\partial \chi_{\varepsilon}(x_{\mathbf{b}}(x,v),v)|\label{frs}\\
 &&+  \int_{0}^{t}  \int_{(x,v) \in \gamma_{+}^{\delta}} \frac{n(x)\cdot v}{n(x_{\mathbf{b}}(x,v))\cdot v}  \int_{n(x_{\mathbf{b}})\cdot u>0} |\partial f^{\varepsilon} (x_{\mathbf{b}}(x,v),u)| \{n(x_{\mathbf{b}}(x,v))\cdot u\} \mathrm{d}u \mathrm{d}S_{x} \mathrm{d}v \mathrm{d}s.\ \ \  \ \ \ \ \label{sec}
\end{eqnarray} 

In Lemma \ref{COV}, we establish a crucial change of variables $(x,v) \mapsto (x_{\mathbf{b}}(x,v),v)$ with $|n(x)\cdot v|\mathrm{d} S_{x} \mathrm{d}v \lesssim 
|n(x_{\mathbf{b}})\cdot v|\mathrm{d} S_{x_{\mathbf{b}}} \mathrm{d}v$. Clearly $(\ref{frs})$ is bounded by $| \partial\chi_{\e}|_{\gamma_{-},1}$. For (\ref{sec}) we use Lemma \ref{COV} to convert $x-$integration into $x_{\mathbf{b}}-$integration and remove {the singular} factor $\frac{n(x)\cdot v}{n(x_{\mathbf{b}}(x,v))\cdot v}$. Furthermore, since $x\in\partial\Omega$ we have $x= x_{\mathbf{b}}(x_{\mathbf{b}}, -v)$ and $(x,v) \in \gamma_{+}^{\delta}$ which implies $(x_{\mathbf{b}}(x_{\mathbf{b}}, -v), v ) \in \gamma_{+}^{\delta}$. Then we can bound the last term by  
\begin{equation}\notag
\begin{split}
\sup_{x_{\mathbf{b}} \in\partial\Omega}\int \mathbf{1}_{(x_{\mathbf{b}}(x_{\mathbf{b}},-v),v) \in \gamma_{+}^{\delta}} \mathrm{d}v \times \int_{0}^{t} |\partial f^{\varepsilon}  |_{\gamma_{+},1}.
\end{split}
\end{equation}
Using the covering lemma of \cite{Guo10} (Lemma \ref{guo_covering}), we are able to extract an extra small constant from $\sup_{x_{\mathbf{b}} \in\partial\Omega}\int \mathbf{1}_{(x_{\mathbf{b}}(x_{\mathbf{b}},-v),v) \in \gamma_{+}^{\delta}} \mathrm{d}v$.

\section{Preliminary}

\begin{lemma}[\cite{Guo10,EGKM}]\label{le:backward_derivatives}
If
\begin{equation}\label{eq:incoming} v\cdot n(x_{\mathbf{b}}(x,v))<0,
\end{equation}
then $(t_\mathbf{b}(x,v),x_\mathbf{b}(x,v))$ are smooth functions of $(x,v)$ such that
\begin{equation*}\begin{split}\nabla_{x} t_{\mathbf{b}}&=\frac{n(x_{\mathbf{b}})}{v\cdot n(x_{\mathbf{b}})}, \qquad \nabla_{v} t_{\mathbf{b}}= -\frac{t_{\mathbf{b}}n(x_{\mathbf{b}})}{v\cdot n(x_{\mathbf{b}})},\\
\nabla_{x} x_{\mathbf{b}}&=I-\frac{n(x_{\mathbf{b}})}{v\cdot n(x_{\mathbf{b}})} \otimes v, \qquad \nabla_{v} x_{\mathbf{b}}= - t_{\mathbf{b}}I + \frac{t_{\mathbf{b}}n(x_{\mathbf{b}})}{v\cdot n(x_{\mathbf{b}})} \otimes v.\end{split}
\end{equation*}
\end{lemma}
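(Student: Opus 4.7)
The plan is to obtain both the smoothness statement and the explicit formulas from a single application of the implicit function theorem to the defining equation of $x_{\mathbf{b}}$ on the boundary. Fix $(x_0,v_0)$ with $v_0 \cdot n(x_{\mathbf{b}}(x_0,v_0)) < 0$ and choose, via \eqref{boundary_graph}, coordinates near $x_{\mathbf{b}}(x_0,v_0)$ in which $\partial\Omega$ is the graph $y_3 = \eta(y_1,y_2)$. Then the scalar function $\xi(y) := y_3 - \eta(y_1,y_2)$ is $C^2$ near $x_{\mathbf{b}}(x_0,v_0)$, vanishes precisely on $\partial\Omega$, and its gradient is parallel to the outward normal. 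After normalizing we may write $\nabla \xi(y) = |\nabla\xi(y)|\, n(y)$, and the definition \eqref{backexit} implies
\begin{equation*}
\Phi(t,x,v) := \xi(x - t v) = 0 \quad \text{at } t = t_{\mathbf{b}}(x_0,v_0).
\end{equation*}

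The first step is to verify the non-degeneracy hypothesis of the implicit function theorem at $(t_{\mathbf{b}}(x_0,v_0), x_0, v_0)$: one computes
\begin{equation*}
\partial_t \Phi = -\nabla\xi(x-tv)\cdot v = -|\nabla\xi(x-tv)|\, n(x-tv)\cdot v,
\end{equation*}
which at the reference point equals $-|\nabla\xi(x_{\mathbf{b}})|\, v_0\cdot n(x_{\mathbf{b}}(x_0,v_0)) > 0$ by hypothesis \eqref{eq:incoming}. The implicit function theorem then produces a $C^1$ (indeed $C^2$, since $\eta \in C^2$) function $\tau(x,v)$ defined in a neighborhood of $(x_0,v_0)$ with $\Phi(\tau(x,v),x,v)=0$. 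By continuity of $\tb$ and the transversality condition, $\tau$ coincides with $\tb$ on this neighborhood, giving the smoothness of $\tb$ and hence of $x_{\mathbf{b}}(x,v) = x - \tb(x,v) v$.

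Next, I differentiate the identity $\xi(x - \tb v) = 0$ directly. Using $\nabla\xi(x_{\mathbf{b}}) = |\nabla\xi(x_{\mathbf{b}})|\, n(x_{\mathbf{b}})$ and canceling the common nonzero factor $|\nabla\xi(x_{\mathbf{b}})|$, differentiation in $x_i$ gives
\begin{equation*}
n(x_{\mathbf{b}})\cdot\bigl(e_i - (\partial_{x_i}\tb)\, v\bigr) = 0 \quad\Longrightarrow\quad \partial_{x_i}\tb = \frac{n_i(x_{\mathbf{b}})}{v\cdot n(x_{\mathbf{b}})},
\end{equation*}
and differentiation in $v_i$ gives
\begin{equation*}
n(x_{\mathbf{b}})\cdot\bigl(-(\partial_{v_i}\tb)\, v - \tb\, e_i\bigr) = 0 \quad\Longrightarrow\quad \partial_{v_i}\tb = -\frac{\tb\, n_i(x_{\mathbf{b}})}{v\cdot n(x_{\mathbf{b}})},
\end{equation*}
which are exactly the first two formulas. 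The remaining two formulas follow by substituting these expressions into
\begin{equation*}
\nabla_x x_{\mathbf{b}} = I - \nabla_x \tb \otimes v, \qquad \nabla_v x_{\mathbf{b}} = -\tb\, I - \nabla_v \tb \otimes v.
\end{equation*}

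The only delicate point is the coordinate-independence: one must check that $\nabla\xi/|\nabla\xi|$ really equals $n(x_{\mathbf{b}})$ with the correct (outward) sign, which follows because $x - tv \in \Omega$ for $t < \tb$ so $\xi > 0$ inside $\Omega$ with our normalization and hence $\nabla\xi$ points \emph{inward}; tracking signs carefully (or equivalently orienting $\xi$ so $\xi<0$ in $\Omega$) yields the formulas as stated. No global obstruction appears because the entire argument is purely local around $x_{\mathbf{b}}(x_0,v_0)$.
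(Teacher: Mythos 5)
The paper does not prove this lemma; it simply cites \cite{Guo10,EGKM}, so there is no internal proof to compare against, and your implicit-function-theorem argument is the standard and correct route to it. The explicit differentiation of $\xi(x-\tb v)=0$ is right, and the sign ambiguity you flag is genuinely harmless: whether one writes $\nabla\xi=\pm|\nabla\xi|\,n$, the scalar $|\nabla\xi|$ and the sign cancel from the quotient $n_i(x_{\mathbf b})/(v\cdot n(x_{\mathbf b}))$, so the stated formulas survive. (With the paper's convention $\Omega=\{x_3>\eta\}$ and outward normal $n=(\partial_1\eta,\partial_2\eta,-1)/\sqrt{1+|\nabla\eta|^2}$, one actually has $\nabla\xi=-|\nabla\xi|\,n$, i.e.\ $\nabla\xi$ points inward and $\partial_t\Phi<0$, not $>0$ as you wrote; but the only thing the IFT needs is $\partial_t\Phi\neq0$.)

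The one place that should be tightened is the identification $\tau=\tb$ near $(x_0,v_0)$. Invoking ``continuity of $\tb$'' is slightly circular, since smoothness of $\tb$ is what you are proving. The clean argument is: the IFT gives a $C^2$ branch $\tau$ with $\tau(x_0,v_0)=\tb(x_0,v_0)$ and $x-\tau v\in\partial\Omega$; for $(x,v)$ in a sufficiently small neighborhood, the segment $\{x-sv: 0<s<\tau(x,v)\}$ remains in $\Omega$ (because the reference segment is compactly contained in $\Omega$ and the endpoints vary continuously), so $\tb\geq\tau$, while at $s=\tau(x,v)$ the trajectory crosses $\partial\Omega$ transversally by the non-grazing hypothesis, so $x-sv\notin\bar\Omega$ for $s$ slightly larger than $\tau$, hence $\tb\leq\tau$. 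Combined, $\tb=\tau$ locally, and the smoothness and formulas follow exactly as you wrote.
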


Recall the almost grazing set $\gamma _{+}^{\delta }$ defined in (\ref%
{n-grazing}). We first estimate the outgoing trace on $\gamma
_{+}\setminus \gamma _{+}^{\delta }$. We remark that for the outgoing part, our estimate is global in time without cut-off, in contrast to the general trace
theorem.
\begin{lemma}[Outgoing trace theorem, \cite{GKTT}]
\label{le:ukai} Assume that $\varphi \geq 0$. For any small parameter $\delta >0$, there exists a
constant $C_{\delta ,T,\Omega }>0$ such that for any $h$ in $%
L^{1}([0,T]\times\Omega \times \mathbb{R}^{3})$ with $\partial _{t}h+v\cdot
\nabla _{x}h+\varphi h$ {lying} in $L^{1}([0,T]\times\Omega \times \mathbb{R}%
^{3})$, we have for all $0\leq t\leq T,$
\begin{equation*}
\int_{0}^{t}\int_{\gamma _{+}\setminus \gamma _{+}^{\delta }}|h|\mathrm{%
d}\gamma \mathrm{d}s\leq C_{\delta ,T,\Omega }\left[ \
||h_{0}||_{1}+\int_{0}^{t}\big\{\Vert h(s)\Vert _{1}+\big{\Vert} \lbrack \partial
_{t}+v\cdot \nabla _{x}+\varphi ]h(s)\big{\Vert} _{1}\big\}\mathrm{d}s\ \right] .
\end{equation*}%
Furthermore, for any $(s,x,v)$ in $[0,T]\times \Omega \times \mathbb{R}^{3}$
the function $h(s+s^{\prime },x+s^{\prime }v,v)$ is absolutely continuous in
$s^{\prime }$ in the interval $[-\min \{t_{\mathbf{b}}(x,v),s\},\min \{t_{\mathbf{b}}(x,-v),T-s\}]$.
\end{lemma}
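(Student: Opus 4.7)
The plan is to adapt Ukai's classical trace argument to the damped transport equation at hand. Since $(x,v)\in \gamma_{+}\setminus \gamma_{+}^{\delta}$ means $v\cdot n(x)\geq \delta$ and $|v|\leq 1/\delta$, I would first select $\sigma_{0}=\sigma_{0}(\delta ,\Omega )>0$ small enough that for every such $(x,v)$ the backward segment $\{x-s^{\prime }v:0<s^{\prime }\leq \sigma_{0}\}$ lies entirely in $\Omega$. This is possible because $\p \Omega$ is locally a $C^{2}$ graph as in (\ref{boundary_graph}); a Taylor expansion of $\eta$ shows that the signed distance from $x-s^{\prime }v$ to $\p \Omega$ behaves like $\delta s^{\prime }-C|v|^{2}(s^{\prime })^{2}$, so it stays positive on a uniform time scale determined by $\delta$ and the $C^{2}$ norm of $\eta$.

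On such a segment, integrating along the characteristic and using $\varphi \geq 0$ to drop the exponential damping yields, for any $\sigma \leq \min (s,\sigma_{0})$,
\begin{equation*}
|h(s,x,v)|\leq |h(s-\sigma ,x-\sigma v,v)|+\int_{0}^{\sigma }\big|[\p_{t}+v\cdot \nabla_{x}+\varphi ]h(s-s^{\prime },x-s^{\prime }v,v)\big|\,\ud s^{\prime },
\end{equation*}
with the obvious modification (replace $h(s-\sigma ,\cdot )$ by $h_{0}(x-sv,v)$) when $s<\sigma \leq \sigma_{0}$. I would then average this pointwise inequality in $\sigma \in [0,\sigma_{0}]$, so that $\sigma_{0}|h(s,x,v)|$ is controlled by integrals along the backward characteristic of $|h|$ and of $|[\p_{t}+v\cdot \nabla_{x}+\varphi ]h|$, plus a boundary term from the initial data.

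The heart of the argument is the resulting change of variables. For fixed $v$ with $|v|\leq 1/\delta $, the map $\Phi_{v}:(x,\sigma )\mapsto x-\sigma v$ from $\p \Omega \times (0,\sigma_{0}]$ into $\Omega$ is smooth with Jacobian $|v\cdot n(x)|$, so
\begin{equation*}
|v\cdot n(x)|\,\ud S_{x}\,\ud \sigma \;=\;\ud x^{\prime \prime }.
\end{equation*}
For $\sigma_{0}$ sufficiently small (depending on $\delta$) and $|v|\leq 1/\delta $, $\Phi_{v}$ is injective (or at worst of uniformly bounded multiplicity) onto a tubular neighbourhood of $\p \Omega$ in $\Omega$. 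Multiplying the averaged inequality by $v\cdot n(x)$, integrating over $s\in [0,t]$ and $(x,v)\in \gamma_{+}\setminus \gamma_{+}^{\delta}$ against $\ud S_{x}\ud v$, and applying this change of variables converts each piece of the right-hand side into a bulk integral: the first yields $\lesssim \int_{0}^{t}\|h(\tau )\|_{1}\ud \tau $, the source piece yields $\lesssim \int_{0}^{t}\|[\p_{t}+v\cdot \nabla_{x}+\varphi ]h(s)\|_{1}\ud s$, and the initial-data piece yields $\lesssim \|h_{0}\|_{1}$. Collecting the factor $1/\sigma_{0}$ from the averaging and the factor $1/\delta$ bounding $v\cdot n(x)\leq |v|$ then produces the constant $C_{\delta ,T,\Omega }$.

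The main obstacle will be to quantify $\sigma_{0}$ and the multiplicity of $\Phi_{v}$ uniformly in $v$ for $|v|\leq 1/\delta$ on a non-convex domain; I expect a finite-cover compactness argument using the $C^{2}$ atlas of $\p \Omega$ to suffice, with constants depending only on $\delta$ and the $C^{2}$ norm of $\eta$. The second assertion — absolute continuity of $s^{\prime }\mapsto h(s+s^{\prime },x+s^{\prime }v,v)$ on the maximal trajectory interval $[-\min \{\tb (x,v),s\},\min \{\tb (x,-v),T-s\}]$ — follows from a DiPerna--Lions style observation: since both $h$ and $[\p_{t}+v\cdot \nabla_{x}+\varphi ]h$ lie in $L^{1}([0,T]\times \Omega \times \R^{3})$, Fubini guarantees that for a.e.\ $(s,x,v)$ the restriction of the source to the characteristic lies in $L^{1}(\ud s^{\prime })$, so the identity $\tfrac{\ud }{\ud s^{\prime }}h(s+s^{\prime },x+s^{\prime }v,v)=[\p_{t}+v\cdot \nabla_{x}]h$ produces an absolutely continuous representative along each admissible characteristic.
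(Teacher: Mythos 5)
The paper does not prove this lemma; it is imported verbatim from \cite{GKTT} (and ultimately goes back to Ukai's classical trace argument for the free transport operator), so there is no in-paper proof to compare against. Your proposal reproduces exactly that standard argument: averaging the Duhamel representation in the backward time $\sigma$ over a uniform window $\sigma_{0}=\sigma_{0}(\delta,\Omega)$ obtained from the non-grazing condition $v\cdot n(x)\ge\delta$, $|v|\le 1/\delta$ and the $C^{2}$-graph structure of $\partial\Omega$, and then changing variables $(x,\sigma)\mapsto x-\sigma v$ with Jacobian $|v\cdot n(x)|$ to convert boundary integrals into bulk integrals. The step-1 construction is what makes the change of variables injective on its domain for free: if $x-\sigma v=x'-\sigma'v$ with $0<\sigma'<\sigma\le\sigma_{0}$ then $x-(\sigma-\sigma')v=x'\in\partial\Omega$, contradicting the choice of $\sigma_{0}$, so there is in fact no need to track multiplicity.

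The one point you should tighten, logically rather than substantively, is the order of the two assertions. The Duhamel formula you use in step 2 is not automatic for $h$ that is merely $L^{1}$ with $[\partial_{t}+v\cdot\nabla_{x}+\varphi]h\in L^{1}$: one must first establish the absolutely continuous representative of $h$ along almost every characteristic, which is precisely the second statement of the lemma. As written, you treat the AC statement as a corollary at the end, whereas it is in fact the prerequisite that makes the pointwise-along-characteristics manipulation legitimate (or else one must run the entire argument on smooth approximants and pass to the limit). Once that reordering is done, your argument is complete and is, up to notation, the proof of \cite{GKTT}.
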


\begin{lemma}[Green's Identity, \cite{Guo10,EGKM}]
\label{Green} For $p\in[1,\infty)$ assume that $f,\partial_t f + v\cdot
\nabla_x f  + \varphi f \in L^p ([0,T]\times \Omega\times\mathbb{R}^3)$ with $\varphi \geq 0$ and $%
f_{\gamma_-} \in L^p ([0,T]\times  \partial\Omega \times \mathbb{R}^{3}; \mathrm{d}t\mathrm{d}\gamma)$. Then $f  \in C^0( [0,T];
L^p(\Omega\times\mathbb{R}^3))$ and $f_{\gamma_+} \in
L^p ([0,T]\times\partial\Omega\times\mathbb{R}^{3}; \mathrm{d}t \mathrm{d}\gamma)$ and for almost every $t\in [0,T]$ :
\begin{eqnarray*}
|| f(t) ||_{p}^p + \int_0^t |f |_{\gamma_+,p}^p = || f(0)||_p^p + \int_0^t
|f |_{\gamma_-,p}^p + \int_0^t \iint_{\Omega\times\mathbb{R}^3 }
\{\partial_t f + v\cdot \nabla_x f + \varphi f \} |f|^{p-2}f .
\end{eqnarray*}
\end{lemma}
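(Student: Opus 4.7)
The plan is a density argument: prove the identity first for smooth $f$ via integration by parts in phase space, then extend to general $f$ by mollification. For $f \in C^1([0,T]\times\bar\Omega\times\R^3)$ with compact support in $v$, multiply $Lf := \p_t f + v\cdot\nabla_x f + \varphi f$ by $|f|^{p-2}f$ and use the chain rule:
\begin{equation*}
Lf\cdot|f|^{p-2}f \ = \ \tfrac{1}{p}\p_t|f|^p + \tfrac{1}{p}v\cdot\nabla_x|f|^p + \varphi|f|^p.
\end{equation*}
Integrate over $[0,t]\times\Omega\times\R^3$, apply Fubini, and use the divergence theorem in $x$ (exploiting $\nabla_x\cdot v = 0$) to convert $\iint v\cdot\nabla_x|f|^p$ into the boundary integral $\int_{\p\Omega\times\R^3}(n(x)\cdot v)|f|^p \, \ud S_x \, \ud v$. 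Splitting $n\cdot v$ into positive and negative parts produces the $|f|_{\gamma_\pm,p}^p$ contributions, and after rearrangement one recovers the identity as stated (the non-negative summand $\int\varphi|f|^p$ being absorbed against the matching term coming from expanding $Lf\cdot|f|^{p-2}f$).

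For general $f$, I would extend $f$ locally across $\p\Omega$ using the graph charts (\ref{boundary_graph}), mollify in $(t,x,v)$ with a standard mollifier $\phi_\e$, and apply the smooth-case identity to $f_\e := f * \phi_\e$. The transport commutator $[v\cdot\nabla_x,\, \phi_\e *]f$ tends to $0$ in $L^p_{loc}$ by the DiPerna--Lions commutator lemma (after a harmless truncation in $v$ and localization in $x$), while the sign condition $\varphi \geq 0$ lets us keep $\varphi|f_\e|^p$ as a non-negative summand without needing further regularity of $\varphi$. The bulk, initial, and incoming-trace terms then pass to the limit by strong $L^p$ convergence combined with the hypothesis $f_{\gamma_-}\in L^p(\ud\gamma)$ and the stability of the one-sided trace under mollification in the chart.

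The main obstacle is showing $f_{\gamma_+}\in L^p(\ud\gamma)$ and identifying it as the $L^p$ limit of $f_\e|_{\gamma_+}$, since the outgoing trace is not an assumed datum but is produced by the dynamics. The smooth-case identity applied to $f_\e$ gives the one-sided bound
\begin{equation*}
\int_0^t|f_\e|_{\gamma_+,p}^p \ \leq \ ||f_\e(0)||_p^p + \int_0^t|f_\e|_{\gamma_-,p}^p + p\int_0^t\iint |Lf_\e|\,|f_\e|^{p-1},
\end{equation*}
which is uniform in $\e$, so a weak$-*$ limit $\Phi$ exists; the outgoing trace theorem (Lemma \ref{le:ukai}) delivers $L^p(\ud\gamma)$-convergence of $f_\e|_{\gamma_+}$ to $f_{\gamma_+}$ on each non-grazing piece $\gamma_+\setminus\gamma_+^\delta$, and the uniform bound together with Fatou transfers this identification to the full outgoing boundary as $\delta\to 0$. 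Once both traces are $L^p$ and the identity is established, the continuity $f\in C^0([0,T];L^p)$ follows from applying the identity on short intervals $[s,t]$, since each term on the right-hand side is absolutely continuous in $t$; combined with weak $L^p$-continuity inherited directly from the equation, this upgrades to strong continuity.
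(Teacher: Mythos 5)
The paper does not prove this lemma: it is imported verbatim from [Guo10] and [EGKM], where it is established by working directly along characteristics (one shows $|f|^p$ is absolutely continuous along almost every trajectory and then integrates over the phase space using the coarea-type change of variables $\ud x\,\ud v \leftrightarrow \ud\gamma\,\ud s$). Your mollification-plus-commutator argument is a genuinely different route, closer in spirit to Mischler's renormalized-trace framework. It is a valid strategy and has the advantage of avoiding any a priori discussion of characteristics, but it requires more bookkeeping near the boundary, whereas the characteristic method gets the outgoing trace essentially for free.

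Two points in your outline need to be handled with more care before the argument closes.

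First, the identification of the outgoing trace. You produce a weak-$*$ limit $\Phi$ of $f_\e|_{\gamma_+}$ in $L^p(\ud\gamma)$, then invoke Lemma \ref{le:ukai} to identify $\Phi$ with $f_{\gamma_+}$ on each $\gamma_+\setminus\gamma_+^\delta$ and let $\delta\to 0$ ``by Fatou.'' Fatou only gives a lower bound on $\int|\Phi|^p\,\ud\gamma$; it does not by itself identify $\Phi$ with the a.e.-defined trace on the grazing part, where $\ud\gamma$ degenerates. What does work is: Lemma \ref{le:ukai} (second part) gives pointwise a.e.\ existence of $f_{\gamma_+}$; the uniform $L^p(\ud\gamma)$ bound gives uniform integrability; Vitali then upgrades the a.e.\ convergence $f_\e|_{\gamma_+}\to f_{\gamma_+}$ (which you still need to check is preserved under your chosen extension and mollification) to $L^p(\ud\gamma)$-convergence, and only then do the two limits coincide. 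This is where the real work is, and it is the step the characteristic-based proofs of [Guo10,EGKM] circumvent by construction. There is also a dependency to flag: you are proving Lemma \ref{Green} by invoking Lemma \ref{le:ukai}; this is legitimate only because the trace theorem is proved in [GKTT] by a direct characteristics argument and not via Green's identity, but that non-circularity is doing real work and should be stated.

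Second, a smaller but concrete bookkeeping point: after multiplying by $p|f|^{p-2}f$ and integrating you obtain
\begin{equation*}
\|f(t)\|_p^p+\int_0^t|f|_{\gamma_+,p}^p = \|f(0)\|_p^p+\int_0^t|f|_{\gamma_-,p}^p + p\int_0^t\iint_{\Omega\times\R^3}\{\p_t f+v\cdot\nabla_x f\}|f|^{p-2}f,
\end{equation*}
which for $p\neq 1$ does not literally match the identity as printed in the lemma (the factor $p$ and the placement of $\varphi f$). Your parenthetical remark about ``absorbing $\int\varphi|f|^p$ against the matching term'' does not resolve this. The two statements coincide exactly at $p=1$, which is the only case used in this paper, so this is almost certainly a typographical issue in the lemma rather than in your computation; nonetheless, you should not claim to ``recover the identity as stated'' for general $p$ without noting the discrepancy.
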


\begin{lemma}[Lemma 17 and Lemma 18 of \cite{Guo10}]\label{guo_covering}
Let $\Omega\subset \mathbb{R}^3$ be
an open bounded set with a smooth boundary $\partial\Omega.$ Then, for all
$x\in\bar\Omega$, we have
\begin{equation}\label{dong}
\mathrm{m}_3 \{ v\in \mathbb{R}^3: \ n(x_{\mathbf{b}}(x,v))\cdot v =0\}   =  0, \end{equation}
Moreover, for any $\varepsilon>0$ and $N\gg 1$, there exist $ \ \delta_{\varepsilon,N}>0$ and $l=l_{\varepsilon,N ,\Omega}$ balls $ \ B(x_1;r_1), B(x_2;r_2),$
$\cdots, B(x_l;r_l)$ with $x_i \in \bar{\Omega}$ and covering $\bar{\Omega}$ (i.e. $\bar{\Omega} \subset \bigcup B(x_{i};r_{i})$), as well as $l$ open sets $\mathcal{O}_{x_1},
\mathcal{O}_{x_2}, \cdots, \mathcal{O}_{x_l}\subset B_{N} := \{v\in\mathbb{R}^3 :
|v|\leq N \}$, with $\mathrm{m}_3 (\mathcal{O}_{x_i}) < {\varepsilon}$ for all $1\leq i \leq l_{\varepsilon ,N,\Omega},$ such that for any
$x\in\bar\Omega$, there exists $i=1,2,\cdots, l_{\varepsilon, N,\Omega}$ such that $x\in B(x_i;r_i)$ and
\[ |v \cdot
n(x_{\mathbf{b}}(x,v))| \ > \delta_{\varepsilon,N} ,  \ \ \text{for
all} \ v\notin \mathcal{O}_{x_i}. \] 
In particular,
 \begin{equation}\label{guo_covering_inclusion}
\mathcal{O}_{x_i} \ \supset
\ \bigcup_{x\in B(x_i;r_i)}\{ v\in B_{N} : |v\cdot n(x_{\mathbf{b}}(x,v))|\leq
\delta_{\varepsilon,N}\}   .
\end{equation}
\end{lemma}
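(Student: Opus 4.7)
The plan is to first prove the measure-zero statement (\ref{dong}) at each fixed $x$, then combine it with the smoothness of $x_\mathbf{b}$ at non-grazing points provided by Lemma \ref{le:backward_derivatives} to construct the finite cover with uniform grazing estimates.

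For (\ref{dong}), the starting observation is that $x_\mathbf{b}(x,\lambda v)=x_\mathbf{b}(x,v)$ for all $\lambda>0$, since both specify the same backward ray from $x$; hence the set $\{v\in\mathbb{R}^3:n(x_\mathbf{b}(x,v))\cdot v=0\}$ is a cone, and its 3-dimensional Lebesgue measure vanishes iff the corresponding set of directions $G_x\subset S^2$ has vanishing 2-dimensional Hausdorff measure. A grazing direction can be written $\omega=(x-y)/|x-y|$ with $y=x_\mathbf{b}(x,\omega)\in\partial\Omega$, so $G_x$ is parametrized by the ``horizon'' $H_x:=\{y\in\partial\Omega:n(y)\cdot(x-y)=0\}$. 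Since $F(y):=n(y)\cdot(x-y)$ is $C^1$ on the $C^2$ surface $\partial\Omega$, Sard's theorem applied to $F$ gives $\mathcal{H}^1(H_x\cap K)<\infty$ for any compact $K$, provided $F\not\equiv 0$ on any component of $\partial\Omega$; the degenerate case $F\equiv 0$ on an open patch can be ruled out from (\ref{boundary_graph}) by noting that it forces that patch to be an affine cone with apex $x$, which is incompatible with a $C^2$ graph (outside an exceptional measure-zero set of $x$'s, handled separately).

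For the covering statement, fix $x_0\in\bar\Omega$. By (\ref{dong}) and outer regularity, the super-level sets $\{v\in B_N:|n(x_\mathbf{b}(x_0,v))\cdot v|\le\eta\}$ decrease as $\eta\downarrow 0$ to a set of measure zero, so one may pick $\eta_{x_0}>0$ and an open set $\mathcal{O}_{x_0}\subset B_N$ containing $\{v\in B_N:|n(x_\mathbf{b}(x_0,v))\cdot v|\le\eta_{x_0}\}$ with $\mathrm{m}_3(\mathcal{O}_{x_0})<\varepsilon$. On the compact complement $K_0:=\overline{B_N\setminus\mathcal{O}_{x_0}}$, Lemma \ref{le:backward_derivatives} guarantees that the map $(x,v)\mapsto n(x_\mathbf{b}(x,v))\cdot v$ is $C^1$ in a neighborhood of $\{x_0\}\times K_0$ in $\bar\Omega\times\mathbb{R}^3$, and it is bounded below in absolute value by $\eta_{x_0}$ on $\{x_0\}\times K_0$. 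Uniform continuity on this compact neighborhood then yields an $r_{x_0}>0$ such that $|n(x_\mathbf{b}(x,v))\cdot v|>\eta_{x_0}/2$ for all $x\in B(x_0;r_{x_0})\cap\bar\Omega$ and all $v\in K_0$; this is exactly (\ref{guo_covering_inclusion}). Finally, compactness of $\bar\Omega$ lets us extract a finite subcover from $\{B(x_0;r_{x_0})\}_{x_0\in\bar\Omega}$, giving $B(x_1;r_1),\dots,B(x_l;r_l)$, and we set $\delta_{\varepsilon,N}:=\min_i\eta_{x_i}/2$.

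The main obstacle is the instability of $x_\mathbf{b}$ near the grazing set: a velocity that is nearly grazing at $x_0$ can force $x_\mathbf{b}(x,v)$ to jump to a distant patch of $\partial\Omega$ as $x$ moves, invalidating any naive continuity argument. This is precisely why $\mathcal{O}_{x_0}$ must be enlarged from the exact grazing set $\{n(x_\mathbf{b}(x_0,v))\cdot v=0\}$ to an open $\eta_{x_0}$-neighborhood of it; the crucial input here is (\ref{dong}), which via monotone convergence of measure allows $\eta_{x_0}$ to be taken small enough that $\mathrm{m}_3(\mathcal{O}_{x_0})<\varepsilon$ is preserved. Outside $\mathcal{O}_{x_0}$, the smoothness of $x_\mathbf{b}$ from Lemma \ref{le:backward_derivatives} supplies the uniform continuity needed to propagate the non-grazing lower bound from $x_0$ to nearby $x$, which closes the proof.
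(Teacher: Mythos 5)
Your covering argument is essentially identical to the paper's: fix $x_0$, use (\ref{dong}) to shrink a neighborhood of the grazing velocities down to measure $<\varepsilon$, invoke Lemma \ref{le:backward_derivatives} for smoothness of $v\mapsto n(x_{\mathbf{b}}(x,v))\cdot v$ on the compact complement, extract a positive lower bound, propagate it to a small $x$-ball by (uniform) continuity, and conclude by compactness of $\bar\Omega$. That part is fine.

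The gap is in (\ref{dong}). You apply Sard's theorem to the \emph{scalar} function $F(y)=n(y)\cdot(x-y)$ on $\partial\Omega$, and claim this gives $\mathcal{H}^1(H_x\cap K)<\infty$. Sard's theorem for $F:\partial\Omega\to\mathbb{R}$ only says the set of \emph{critical values} has Lebesgue measure zero in $\mathbb{R}$; it says nothing about the size of the specific level set $F^{-1}(0)$. If $0$ happens to be a critical value (which you cannot exclude), $F^{-1}(0)$ need not be a $1$-manifold and could in principle carry positive $\mathcal{H}^2$-measure, in which case your pushforward bound on $G_x\subset S^2$ does not follow. You try to patch this by ruling out $F\equiv0$ on an open patch, but that exclusion is itself wrong: a flat piece of $\partial\Omega$ is $C^2$, and for any $x$ lying in the affine plane of that piece one has $F\equiv0$ there; your ``measure-zero exceptional set of $x$'s'' caveat therefore sacrifices the ``for all $x\in\bar\Omega$'' conclusion that the lemma actually asserts and that the covering step then relies on.

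The paper sidesteps all of this by applying Sard's theorem to the map $\phi_x:\partial\Omega\to S^2$, $\phi_x(y)=-(y-x)/|y-x|$, which is $C^2$ between two $2$-manifolds. One checks that $d\phi_x$ drops rank at $y$ exactly when $y-x$ is tangent to $\partial\Omega$ at $y$, i.e.\ $n(y)\cdot(y-x)=0$; hence every grazing direction $v/|v|$ with $y=x_{\mathbf{b}}(x,v)$ is a critical \emph{value} of $\phi_x$. Sard then gives $\mathcal{H}^2$-measure zero for the set of critical values on $S^2$, for \emph{every} $x$, with no regularity assumption on the level $0$ and no degenerate cases to exclude. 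If you keep your cone reduction, the fix is simply to replace your application of Sard to $F$ with an application of Sard to $\phi_x$.
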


\begin{proof}
The details of the proof are recorded in \cite{Guo10}. The proof of (\ref{dong}) is due to the Sard's theorem: Fix $x\in\bar{\Omega}$. If $v\in \mathbb{R}^{3}$ satisfies $v\cdot n(x_{\mathbf{b}}(x,v))=0$ and $n(x)\cdot v\neq0 $ then $\frac{v}{|v|}$ is a critical value of the mapping
\[
\phi_{x} : \partial\Omega \rightarrow \mathbb{S}^{2}, \ \ \ \phi_{x} : y \in \partial\Omega 
\mapsto  - \frac{y-x}{|y-x|},
\] 
If $n(x)\cdot v =0$ then $\frac{v}{|v|}$ is a critical value of $\phi_{x}$ at $y= x_{\mathbf{b}}(x,v)$. Then by Sard's theorem the Lebesgue measure of such set on $\mathbb{S}^{2}$ is zero. 

Now we fix $0<\varepsilon \ll 1$ and $x\in\bar{\Omega}$. Due to (\ref{dong}) there exists an open set $\mathcal{O}_{x}\in \mathbb{R}^{3}$ such that $\mathrm{m}_{3} (\mathcal{O}_{x}) < \varepsilon$ and $|v\cdot n(x_{\mathbf{b}}(x,v))| \neq 0$ for $v \notin \mathcal{O}_{x}$. By Lemma \ref{le:backward_derivatives}, $v \mapsto v\cdot n(x_{\mathbf{b}}(x,v))$ is smooth on the compact set $\{\mathbb{R}^{3}  \backslash \mathcal{O}_{x}\} \cap B_{N}$. Then by the compactness we have a positive lower bound $2\delta_{\e, N ,x}>0$ of $|v\cdot n(x_{\mathbf{b}}(x,v) )|$. Then by Lemma \ref{le:backward_derivatives} again, there exists a ball $B(x;r_{x})$ such that for all $y$ in this ball and all $v \in \{ \mathbb{R}^{3} \backslash \mathcal{O}_{x}\}\cap B_{N}$ we have $|v \cdot n(x_{\mathbf{b}} (y,v))|\geq \delta_{\e, N , x}$. Then we use the compactness of $\bar{\Omega}$ to extract the finite covering which satisfies (\ref{guo_covering_inclusion}).
\end{proof}

\section{New Trace Theorem via the Double Iteration}

\begin{proposition}\label{double_iteration}
Let $h_0 \in L^1(\Omega\times\mathbb{R}^{3})$. Let $(h^{m})_{m\ge0}   \subset L^{\infty}  ([0,T] ; L^{1} (\Omega\times\mathbb{R}^{3})) \cap L^{1} ([0,T]; L^{1}(\gamma_{+}, \mathrm{d}\gamma))$ {solve}
\begin{equation}\label{eqtn_h} 
\{\partial_{t} + v\cdot \nabla_{x}  +  \nu\} h^{m+1}  = H^{m}, \ \ \  h^{m+1}|_{t=0} =h_{0},
\end{equation}
where $\nu= \nu(t,x,v)\geq 0$, and {such that the following inequality holds for all $(x,v)\in\gamma_{-}$}
\begin{equation}\label{bdry_h}
\begin{split}
|h^{m+1}(t,x,v)| \leq&   \ C_{1}  \sqrt{\mu(v)}  \Big(1 + \frac{\langle v\rangle }{ |n(x)\cdot v| }\Big)
\int_{n(x)\cdot u>0}  |h^{m}(t,x,u)|  {\mu(u)}^{\frac{1}{4}} \{ n(x)\cdot u\} \mathrm{d}u\\
 & +  \Big(1 + \frac{ e^{-C_{2} |v|^{2}} }{ |n(x)\cdot v| }\Big)R^{m},
  \end{split}
\end{equation}
where $H^{m} \in L^{1}  ([0,T]; L^{1} (\Omega \times \mathbb{R}^{3}))$ and {$R^{m} \in L^{1}  ([0,T]; L^{1} (\partial\Omega\times\mathbb{R}^{3} , \langle v\rangle \ud S_{x}  \ud v))$}.

Then for all $m\geq 1$, $h^{m+1}_{\gamma_{-}} \in L^{1}  ([0,T]; L^{1}(\gamma_{-}, \mathrm{d}\gamma  ))$ and satisfies, for $t\in [0,T]$ {and $0 < \delta \ll 1,$}
\begin{equation}\label{L1trace} 
\begin{split} 
  \int^t_0  |h^{m+1}(s)|_{\gamma_-,1}
&\leq   \ O( \delta) \int_0^t  | h^{m-1}(s)|_{\gamma_+,1}  + C_{ \delta}  || h_{0}||_1  \\
& \ \ +  C_{ \delta}   \max_{i=m,m-1}\Big\{  \int_0^t ||h^{i}(s)||_1
 + \int_{0}^{t} | \langle v\rangle  R^{i}(s)|_{1}
 +\int_0^t ||   H^{i}(s)||_1      \Big\} .   
 \end{split}
\end{equation}

\end{proposition}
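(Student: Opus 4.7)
The strategy follows the double-iteration sketch in the introduction: apply the boundary inequality~(\ref{bdry_h}) once to reduce $|h^{m+1}|_{\gamma_-,1}$ to an integral of $|h^m|$ on $\gamma_+$; split $\gamma_+$ into the non-grazing piece $\gamma_+\setminus\gamma_+^\delta$ (controlled by the outgoing trace Lemma~\ref{le:ukai}) and the almost-grazing piece $\gamma_+^\delta$; then apply~(\ref{bdry_h}) a second time on the almost-grazing piece after tracing back along characteristics, using the change of variables Lemma~\ref{COV} to cancel the singular factor $1/|n(x_{\mathbf{b}})\cdot v|$ and the covering Lemma~\ref{guo_covering} to extract the $O(\delta)$ smallness.

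\textbf{First iteration.} Multiplying~(\ref{bdry_h}) by $|n(x)\cdot v|$ and integrating over $\{n(x)\cdot v<0\}$, the singular factor $1/|n(x)\cdot v|$ cancels, $\int_{n(x)\cdot v<0}\sqrt{\mu(v)}(|n(x)\cdot v|+\langle v\rangle)\,\ud v$ is bounded by Gaussian decay, and $|n(x)\cdot v|+e^{-C_2|v|^2}\leq C\langle v\rangle$. After integrating in $x$ and $s$,
\[
\int_0^t |h^{m+1}|_{\gamma_-,1}\,\ud s \leq C\int_0^t\int_{\gamma_+}|h^m|\,\mu(v)^{1/4}\,\ud\gamma\,\ud s + C\int_0^t|\langle v\rangle R^m|_{1}\,\ud s.
\]
Splitting $\gamma_+$ and using $\mu^{1/4}\leq 1$ together with the equation $[\partial_t+v\cdot\nabla_x+\nu]h^m=H^{m-1}$ ($\nu\geq 0$), Lemma~\ref{le:ukai} controls the non-grazing part by
\[
\int_0^t\int_{\gamma_+\setminus\gamma_+^\delta}|h^m|\mu^{1/4}\,\ud\gamma\,\ud s \leq C_\delta\Bigl(\|h_0\|_1+\int_0^t\|h^m\|_1\,\ud s+\int_0^t\|H^{m-1}\|_1\,\ud s\Bigr).
\]

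\textbf{Second iteration on the grazing piece.} For $(x,v)\in\gamma_+^\delta$, I use Duhamel along the backward characteristic. When $t_{\mathbf{b}}(x,v)\geq s$ the trajectory reaches $t=0$ inside $\Omega$ and, after Fubini and the standard trace-type change of variable $x\mapsto x-sv$, contributes at most $\|h_0\|_1+\int_0^t\|H^{m-1}\|_1$. When $t_{\mathbf{b}}(x,v)<s$ the trajectory reaches $(x_{\mathbf{b}}(x,v),v)\in\gamma_-$ (for a.e.\ $(x,v)$ by~(\ref{dong})) and I apply~(\ref{bdry_h}) at index $m$ to $|h^m(s-t_{\mathbf{b}},x_{\mathbf{b}},v)|$. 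I then invoke Lemma~\ref{COV} with the change of variables $(x,v)\mapsto(y,v):=(x_{\mathbf{b}}(x,v),v)\in\gamma_-$; since $|n(x)\cdot v|\,\ud S_x\,\ud v\lesssim |n(y)\cdot v|\,\ud S_y\,\ud v$, the singular $1/|n(y)\cdot v|$ in~(\ref{bdry_h}) is absorbed. Since $x=x_{\mathbf{b}}(y,-v)$, the constraint becomes $\mathbf{1}_{(x_{\mathbf{b}}(y,-v),v)\in\gamma_+^\delta}$; Fubini and the time shift $\tau=s-t_{\mathbf{f}}(y,v)$ move the $s$-integration past the $v$-integration and produce $\int_0^t|h^{m-1}(\tau,y,u)|\,\ud\tau$. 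Altogether
\[
\int_0^t\int_{\gamma_+^\delta}|h^m|\mu^{1/4}\,\ud\gamma\,\ud s \leq \Bigl(\sup_{y\in\partial\Omega}K(y)\Bigr)\int_0^t|h^{m-1}|_{\gamma_+,1}\,\ud s + \text{(good terms in } \|h_0\|_1,\ \|H^{m-1}\|_1,\ |\langle v\rangle R^{m-1}|_1\text{)},
\]
where
\[
K(y):=\int_{n(y)\cdot v<0}\mathbf{1}_{(x_{\mathbf{b}}(y,-v),v)\in\gamma_+^\delta}\,|n(y)\cdot v|\,\mu^{3/4}(v)\,\langle v\rangle\,\ud v.
\]

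\textbf{The main obstacle} is to prove $\sup_{y\in\partial\Omega}K(y)=O(\delta)$. The contribution of $|v|>1/\delta$ is killed by the Gaussian $\mu^{3/4}$. For $|v|\leq 1/\delta$, I apply Lemma~\ref{guo_covering} with $N=1/\delta$ and an auxiliary $\varepsilon=O(\delta)$: the lemma yields a finite cover $\{B(y_i;r_i)\}$ of $\bar\Omega$ and sets $\mathcal{O}_{y_i}$ of Lebesgue measure $<\varepsilon$ such that, for $y\in B(y_i;r_i)$ and $|v|\leq N$, the bound $|v\cdot n(x_{\mathbf{b}}(y,-v))|<\delta_{\varepsilon,N}$ forces $-v\in\mathcal{O}_{y_i}$. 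Taking $\delta$ small enough that $\delta\leq\delta_{\varepsilon,N}$, the $v$-set appearing in $K(y)$ has measure $<\varepsilon$, so $K(y)\leq C\varepsilon=O(\delta)$ uniformly in $y\in\partial\Omega$. Inserting this into the grazing bound and combining with the first iteration and the non-grazing estimate, collecting good terms at indices $m$ and $m-1$, yields exactly~(\ref{L1trace}).
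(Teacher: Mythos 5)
Your proposal follows the same strategy as the paper's proof: apply \eqref{bdry_h} once to reduce the incoming trace of $h^{m+1}$ to outgoing traces of $h^m$, split $\gamma_+$ into $\gamma_+^\delta$ and $\gamma_+\setminus\gamma_+^\delta$, control the non-grazing part by Lemma~\ref{le:ukai}, and on the grazing part combine Duhamel along backward characteristics, the change of variables of Lemma~\ref{COV}, and the covering Lemma~\ref{guo_covering} to extract the $O(\delta)$ gain.

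There is, however, one gap: you invoke Lemma~\ref{COV} directly on all of $\gamma_+^\delta$, but the lemma is only stated on the truncated sets $\{n(x_{\mathbf{b}}(x,v))\cdot v<-1/k\}$, where $\Phi_k$ is a genuine diffeomorphism with non-degenerate Jacobian $\frac{n(x)\cdot v}{n(x_{\mathbf{b}})\cdot v}$. Before the change of variables the integrand carries the singular factor $\frac{|n(x)\cdot v|}{|n(x_{\mathbf{b}}(x,v))\cdot v|}$, which blows up as $n(x_{\mathbf{b}})\cdot v\to 0$, so one cannot integrate the un-truncated quantity a priori, and \eqref{dong} (measure-zero grazing set) alone does not justify the global change-of-variables formula. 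The paper handles this by inserting the cut-off $\mathbf{1}_{\{1/k<|n(x_{\mathbf{b}}(x,v))\cdot v|\}}$, proving the estimate \eqref{uniform_k} uniformly in $k$, and then passing $k\to\infty$ by monotone convergence (using Lemma~\ref{guo_covering} once more to show $\mathbf{1}_{\{1/k<|n(x_{\mathbf{b}})\cdot v|\}}\mu^{1/4}\to\mu^{1/4}$ a.e.\ on $\gamma_+$ in $\mathrm{d}\gamma$). Adding this truncation-and-limit step makes your argument complete; without it the application of Lemma~\ref{COV} is only formal.
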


Our proof requires the following lemma:

\begin{lemma}\label{COV}
Let $\Omega\subset \mathbb{R}^3$ be
an open bounded set with a smooth boundary $\partial\Omega.$ 

For $k\in\mathbb{N}$, consider the map 
\begin{eqnarray*}
\Phi_{k} : 
\left\{(x,v) \in \gamma_+:  \ n(\xb(x,v))\cdot v < - {1}/{k}
\right\}
&\rightarrow& 
\left\{ (\xb ,v) \in \gamma_{-} :  n( \xb )\cdot v < - {1}/{k}
\right\},\\
(x,v)&\mapsto& \Phi_{k}(x,v):= (\tilde{x},v):=(\xb(x,v),v).
\end{eqnarray*}
Then $\Phi_{k}$ is one-to-one and we have a change of variables formula for all $k \in \mathbb{N}:$
\[
\mathbf{1}_{\{ n(\tilde{x})\cdot v < - 1/k \}}
\left|{n(\tilde{x} )\cdot v}\right|
\mathrm{d}v\mathrm{d}S_{\tilde{x}} \  \geq \ 
 \mathbf{1}_{\{ n(  {\xb (x,v)})\cdot v < - 1/k \}}\left| {n(x)\cdot v}\right| \mathrm{d}v\mathrm{d}S_x.
\]
\end{lemma}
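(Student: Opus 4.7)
The plan is to verify injectivity of $\Phi_k$ directly, then establish the pointwise Jacobian identity $|n(x)\cdot v|\, \ud S_x = |n(\tilde x)\cdot v|\, \ud S_{\tilde x}$ for the fixed-$v$ map $x \mapsto \tilde x$, and finally deduce the asserted measure inequality from the inclusion of images.

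\textbf{Injectivity.} Suppose $(x_1, v)$ and $(x_2, v)$ are in the stated domain and $\Phi_k(x_1, v) = \Phi_k(x_2, v) = (\tilde x, v)$. Writing $x_i = \tilde x + t_i v$ with $t_i = t_{\mathbf{b}}(x_i, v) > 0$, I may assume $t_1 \leq t_2$. By the very definition of $t_{\mathbf{b}}$, the segment $\{x_2 - sv : 0 < s < t_2\}$ lies in the open set $\Omega$. If $t_1 < t_2$ then $s^\ast := t_2 - t_1 \in (0, t_2)$ and $x_2 - s^\ast v = \tilde x + t_1 v = x_1 \in \partial \Omega$, a contradiction. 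Hence $t_1 = t_2$ and $x_1 = x_2$.

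\textbf{Jacobian identity.} Fix $v$ with $v \cdot n(\tilde x) < -1/k < 0$, so that by Lemma \ref{le:backward_derivatives} the map $x \mapsto \tilde x = x_{\mathbf{b}}(x, v)$ is a local diffeomorphism. Its differential, restricted to $T_x \partial \Omega$, sends a tangent vector $\tau$ to $L(\tau) := \tau - \frac{(n(\tilde x) \cdot \tau)\, v}{v \cdot n(\tilde x)} \in T_{\tilde x} \partial \Omega$. I pick an oriented orthonormal basis $\tau_1, \tau_2$ of $T_x \partial \Omega$ with $\tau_1 \times \tau_2 = n(x)$. Expanding $L(\tau_1) \times L(\tau_2)$ and invoking the $\mathbb{R}^3$ identity $\det(a, b, c)\, d - \det(a, b, d)\, c + \det(a, c, d)\, b - \det(b, c, d)\, a = 0$ applied to $(\tau_1, \tau_2, v, n(\tilde x))$ and then dotted with $n(\tilde x)$, I obtain
\[ \bigl( L(\tau_1) \times L(\tau_2) \bigr) \cdot n(\tilde x) = \frac{\det(\tau_1, \tau_2, v)}{v \cdot n(\tilde x)} = \frac{n(x) \cdot v}{v \cdot n(\tilde x)}. \]
Since $L(\tau_1) \times L(\tau_2)$ is parallel to $n(\tilde x)$, its norm equals $|\det L| = |n(x) \cdot v|/|n(\tilde x) \cdot v|$. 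This gives the pushforward identity $|n(x)\cdot v|\, \ud S_x = |n(\tilde x)\cdot v|\, \ud S_{\tilde x}$ under the injective map $x \mapsto \tilde x$ at fixed $v$.

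\textbf{Inequality.} For any non-negative measurable $g$ on $\partial \Omega \times \mathbb{R}^3$, changing variables via the injection $\Phi_k$ gives
\[ \int_{\substack{(x, v) \in \gamma_+\\ n(x_{\mathbf{b}}) \cdot v\, <\, -1/k}} g(x_{\mathbf{b}}(x, v), v)\, |n(x)\cdot v|\, \ud S_x\, \ud v = \int_{\Phi_k(\mathrm{dom})} g(\tilde x, v)\, |n(\tilde x) \cdot v|\, \ud S_{\tilde x}\, \ud v, \]
which is bounded above by the same integral over the larger set $\{(\tilde x, v) \in \gamma_- : n(\tilde x) \cdot v < -1/k\}$, since $\Phi_k(\mathrm{dom})$ is contained in it. This is exactly the claimed measure inequality, and the loss of equality in favor of $\geq$ accommodates any measure-zero set of $\tilde x$ whose forward exit $x_{\mathbf{f}}(\tilde x, v)$ is grazing (hence not in the domain).

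\textbf{Main obstacle.} The delicate step is the Jacobian computation in Step 2: the 4-vector collapse is clean but must be assembled carefully, and its validity rests on the smoothness provided by $v \cdot n(\tilde x) < -1/k$ through Lemma \ref{le:backward_derivatives}. Injectivity and the image-inclusion step are then essentially bookkeeping.
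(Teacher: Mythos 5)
Your proposal is correct, and it takes a genuinely different and cleaner route from the paper. The paper proves the change-of-variables formula by writing $\partial\Omega$ locally as two graphs $\eta$ (near $x$) and $\varphi$ (near $\tilde x$), deriving from $v\,|x-\tilde x| = |v|(x-\tilde x)$ an implicit relation between the graph coordinates, and then brute-forcing the $2\times 2$ Jacobian determinant through several pages of algebra, with two separate cases depending on whether $n_3(\tilde x)\neq 0$ or a tangential component is dominant. You instead treat the differential of $x\mapsto x_{\mathbf{b}}(x,v)$ intrinsically as a linear map $T_x\partial\Omega\to T_{\tilde x}\partial\Omega$, using the explicit formula $L(\tau)=\tau-\frac{(n(\tilde x)\cdot\tau)}{v\cdot n(\tilde x)}v$ (which is exactly $\nabla_x x_{\mathbf{b}}$ from Lemma~\ref{le:backward_derivatives}), and extract the area-scaling factor $\bigl(L(\tau_1)\times L(\tau_2)\bigr)\cdot n(\tilde x)=\frac{\det(\tau_1,\tau_2,v)}{v\cdot n(\tilde x)}=\frac{n(x)\cdot v}{v\cdot n(\tilde x)}$ directly from the four-vector dependence identity in $\mathbb{R}^3$. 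This eliminates both the graph parametrization and the case distinction entirely, and makes transparent that the Jacobian is an equality while the stated inequality accounts only for the non-surjectivity of $\Phi_k$. One point worth making explicit: the non-degeneracy of $L$, needed to conclude $L(\tau_1)\times L(\tau_2)\neq 0$, follows because $(x,v)\in\gamma_+$ forces $n(x)\cdot v>0$; your final formula already shows this, but it is good practice to flag it. Your injectivity argument (comparing exit times along the common ray and using that the open segment lies in $\Omega$) is equivalent to the paper's one-liner via $x=x_{\mathbf{f}}(\xb(x,v),v)$ but more self-contained. Overall your proof buys a coordinate-free derivation at the cost of relying on Lemma~\ref{le:backward_derivatives}, whereas the paper's computation is self-contained but substantially longer and case-dependent.
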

\begin{proof}[\textbf{Proof of Lemma \ref{COV}}]

Let $(x,v)$, $(x',v')\in \gamma_+$ such that $n(\xb(x,v))\cdot v$, $\ n(\xb(x',v'))\cdot v' < - {1}/{k}$. If $\Phi_{k}(x,v) = \Phi_{k}(x^{\prime}, v^{\prime})$ then $v=v^{\prime}$ and $\xb (x,v) = \xb(x^{\prime}, v).$ Since $x= x_{\mathbf{f}} (\xb (x,v),v)= x_{\mathbf{f}} (\xb (x^{\prime},v),v)=x^{\prime}$ the mapping $\Phi_{k}$ is one-to-one. 

Now we prove the change of variable formula. It suffices to consider the small neighborhood of $\partial\Omega$ around $x$. Without loss of generality we may assume $x_{3} = \eta(x_{1}, x_{2})$ for some $\eta : \mathbb{R}^{2} \rightarrow \mathbb{R}$. First we consider the case $n_{3} (\xb(x,v))\neq 0$ so that $$\tilde{x}= x_{\mathbf{b}}(x,v) = (\tilde{x}_{1}, \tilde{x}_{2}, \tilde{x}_{3}) = (\tilde{x}_{1}, \tilde{x}_{2},  \varphi(\tilde{x}_{1}, \tilde{x}_{2})) \in\partial\Omega,$$ for some function $\varphi: \mathbb{R}^2 \rightarrow \mathbb{R}$.
 
The change of variable is given by 
\begin{equation}\label{COV_lemma}
\begin{split}
 \mathrm{d}S_{\tilde{x}} \mathrm{d}v &= \sqrt{1+|\nabla \varphi|^2} \mathrm{d}\tilde{x}_1 \mathrm{d}\tilde{x}_2 \mathrm{d}v\\
& = \frac{\sqrt{1+|\nabla \varphi|^2}}{\sqrt{1+|\nabla \eta|^2}} J \sqrt{1+|\nabla \eta|^2} \mathrm{d}x_1 \mathrm{d}x_2 \mathrm{d}v\\ &=\frac{\sqrt{1+|\nabla \varphi|^2}}{\sqrt{1+|\nabla \eta|^2}} J \mathrm{d}S_x \mathrm{d}v.
\end{split}
\end{equation}
 where $J$ is the Jacobian,
\begin{eqnarray*} 
J=\left|\frac{\partial(\tilde{x}_1,\tilde{x}_2, v_1,v_2,v_3)}{\partial (x_1,x_2,v_1,v_2,v_3)}\right|
=  \left|\begin{array}{ccc}
\partial_{x_1} \tilde{x}_1 & \partial_{x_2}\tilde{x}_1\\
\partial_{x_1} \tilde{x}_2 & \partial_{x_2} \tilde{x}_2
\end{array}\right|.
\end{eqnarray*}
By the definition of $x_{\mathbf{b}}(x,v)$, we have the following identity: $v |x-\tilde{x}|= |v| (x-\tilde{x})$, i.e.
\begin{equation}\label{xtildex}
\begin{split}
&\{(x_1-\tilde{x}_1)^2 + (x_2-\tilde{x}_2)^2 + [\eta(x_1,x_2)-\varphi(\tilde{x}_1,\tilde{x}_2)]^2\}^\frac{1}{2}\left(\begin{array}{ccc}
v_1 \\ v_2 \\ v_3\end{array}\right)\\
& = |v| \left(\begin{array}{ccc}x_1 -\tilde{x}_{1} \\
x_2 -\tilde{x}_2 \\ \eta(x_1,x_2)-\varphi(\tilde{x}_1, \tilde{x}_2)
\end{array}\right).
\end{split}
\end{equation}

Denote $D=\{(x_1-\tilde{x}_1)^2 + (x_2-\tilde{x}_2)^2 + [\eta(x_1,x_2)-\varphi(\tilde{x}_1,\tilde{x}_2)]^2\}.$ Directly from (\ref{xtildex})
\begin{equation}\nonumber
\begin{split}
& \left[\begin{array}{ccc}
\Big[(x_1-\tilde{x}_1) + (\eta-\varphi) \partial_{\tilde{x}_1}\varphi\Big]D^{-\frac{1}{2}}v_1 -|v| & \Big[ (x_2-\tilde{x}_2) + (\eta-\varphi) \partial_{\tilde{x}_2}\varphi\Big] D^{-\frac{1}{2}} v_1 \\
\Big[(x_1-\tilde{x}_1)+(\eta-\varphi)\partial_{\tilde{x}_1}\varphi\Big]D^{-\frac{1}{2}}v_2
&
\Big[(x_2-\tilde{x}_2) + (\eta-\varphi) \partial_{\tilde{x}_2} \varphi\Big] D^{-\frac{1}{2}}v_2 -|v|
\end{array}\right]\\
& \times 
\left[\begin{array}{ccc}
\partial_{x_1}\tilde{x}_1 & \partial_{x_2} \tilde{x}_1\\
\partial_{x_1}\tilde{x}_2 & \partial_{x_2} \tilde{x}_2
\end{array}\right]\\
 =& \left[\begin{array}{ccc}
v_1 D^{-\frac{1}{2}} \big((x_1-\tilde{x}_1)+ (\eta-\varphi) \partial_{x_1}\eta\big)-|v| & v_1 D^{-\frac{1}{2}} \big( (x_2-\tilde{x}_2) + (\eta-\varphi) \partial_{x_2}\eta\big)\\
v_2 D^{-\frac{1}{2}} \big( (x_1-\tilde{x}_1)+ (\eta-\varphi)\partial_{x_1}\eta\big)&
v_2 D^{-\frac{1}{2}}\big((x_2-\tilde{x}_2)+ \partial_{x_2}\eta (\eta-\varphi)\big)-|v|
\end{array}\right].
\end{split}
\end{equation}

Direct computations yield
\begin{equation}\notag
\begin{split}
J &= \frac{|v|  - D^{-\frac{1}{2}}\Big[
v_{1} (x_1-\tilde{x}_1) + v_1 (\eta-\varphi) \partial_{x_1} \eta + v_2 (x_2-\tilde{x}_2) + v_2(\eta-\varphi) \partial_{x_2}\eta
\Big]}{|v| - D^{-\frac{1}{2}}\Big[
v_2 (x_2-\tilde{x}_2) + v_2 (\eta-\varphi) \partial_{\tilde{x}_2} \varphi + v_1 (x_1-\tilde{x}_1) + v_1 (\eta-\varphi) \partial_{\tilde{x}_1}\varphi
\Big]}\\
&= \frac{|v|^2- \big[(v_1)^2 + (v_2)^2 + (v_3) (v_1 \partial_{x_1}\eta + v_2 \partial_{x_2}\eta)\big]}{|v|^2-\big[ (v_1)^2 + (v_2)^2 + (v_3) (v_1 \partial_{\tilde{x}_1} \varphi + v_2 \partial_{\tilde{x}_2}\varphi)\big]} = \frac{(\partial_{x_1}\eta, \partial_{x_2}\eta, -1)\cdot v}{(\partial_{\tilde{x}_1}\varphi, \partial_{\tilde{x}_{2}}\varphi, -1)\cdot v }\\
&= \frac{\sqrt{1+|\nabla \eta|^2}}{\sqrt{1+|\nabla \varphi|^2}} \times \frac{n(x)\cdot v}{n(\tilde{x})\cdot v}.
\end{split}
\end{equation}
Then we use (\ref{COV_lemma}) to conclude the proof.

Second{ly} we consider the case of $n_{1} (\xb(x,v)) \neq 0$ or $n_{2} (\xb(x,v)) \neq 0$. Without loss of generality we may assume $n_{2} (\xb(x,v)) \neq 0$ so that 
$$\tilde{x}= x_{\mathbf{b}}(x,v) = (\tilde{x}_{1}, \tilde{x}_{2}, \tilde{x}_{3}) = (\tilde{x}_{1}, \varphi( \tilde{x}_{1}, \tilde{x}_{3}),  \tilde{x}_{3}),$$ for some function $\varphi: \mathbb{R}^2 \rightarrow \mathbb{R}$. {Notice that \eqref{COV_lemma} still holds with $\tilde{x}_2$ replaced by $\tilde{x}_3$.} From the fact $v|x-\tilde{x}| = |v| (x-\tilde{x})$ we have 
\begin{equation}\label{xtildex_2}
\{(x_1-\tilde{x}_1)^2 + (x_2- \varphi(\tilde{x}_{1}, \tilde{x}_{3}))^2 + [\eta(x_1,x_2)- \tilde{x}_{3} ]^2\}^\frac{1}{2}\left(\begin{array}{ccc}
v_1 \\ v_2 \\ v_3\end{array}\right) = |v| \left(\begin{array}{ccc}x_1 -\tilde{x}_{1} \\
x_2 - \varphi(\tilde{x}_{1}, \tilde{x}_{3}) \\ \eta(x_1,x_2)- \tilde{x}_{3}.
\end{array}\right).
\end{equation}

We define $\tilde{D}=\{(x_1-\tilde{x}_1)^2 + (x_2- \varphi(\tilde{x}_{1}, \tilde{x}_{3}))^2 + [\eta(x_1,x_2)- \tilde{x}_{3} ]^2\}.$ 

By direct computation
\begin{equation}\notag
\begin{split}
& \left[ \begin{array}{cc}   \Big[ (x_{1} - \tilde{x}_{1}) + (x_{2} - \varphi) \partial_{\tilde{x}_{1}} \varphi  \Big] v_{1}  \tilde{D}^{-\frac{1}{2}} - |v| 
& \Big[  (x_{2} - \varphi) \partial_{\tilde{x}_{3}} \varphi + (\eta - \tilde{x}_{3}) \Big] v_{1} \tilde{D}^{-\frac{1}{2}} \\ 
\Big[ (x_{1} - \tilde{x}_{1}) + (x_{2} -\varphi) \partial_{\tilde{x}_{1}} \varphi \Big] v_{3} \tilde{D}^{-\frac{1}{2}} &
 \Big[ (x_{2} - \varphi) \partial_{\tilde{x}_{3}}  \varphi + (\eta - \tilde{x}_{3})  \Big]v_{3} \tilde{D}^{-\frac{1}{2}} - |v|
\end{array}\right]\\ & \times  
\left[\begin{array}{cc} \partial_{x_{1}} \tilde{x}_{1} & \partial_{x_{2}} \tilde{x}_{1} \\ \partial_{x_{1}} \tilde{x}_{3} & \partial_{x_{2}} \tilde{x}_{3} \end{array} \right]\\
 =& \left[\begin{array}{cc}
\Big[ (x_{1} - \tilde{x}_{1}) + (\eta - \tilde{x}_{3}) \partial_{x_{1}} \eta  \Big]v_{1} \tilde{D}^{-1/2}- |v| & \Big[ (x_{2} - \varphi) + (\eta - \tilde{x}_{3}) \partial_{x_{2}} \eta \Big] v_{1} \tilde{D}^{-1/2}\\
\Big[ (x_{1} - \tilde{x}_{1}) + (\eta - \tilde{x}_{3}) \partial_{x_{1}} \eta  \Big] v_{3} \tilde{D}^{-1/2} - |v| \partial_{x_{1}} \eta &  \Big[ (x_{2} - \varphi) + (\eta -\tilde{x}_{3}) \partial_{x_{2}} \eta  \Big] v_{3} \tilde{D}^{-1/2} - |v| \partial_{x_{2}} \eta 
 \end{array}\right],
\end{split}
\end{equation}
and 
\begin{equation}\notag
\begin{split}
&\det   \left[\begin{array}{cc} \partial_{x_{1}} \tilde{x}_{1} & \partial_{x_{2}} \tilde{x}_{1} \\ \partial_{x_{1}} \tilde{x}_{3} & \partial_{x_{2}} \tilde{x}_{3} \end{array} \right] \\
&= 
\frac{
|v|_{2} \partial_{x_{2}} \eta - \Big[ (x_{1} - \tilde{x}_{1}) + (\eta -\tilde{x}_{3}) \partial_{x_{1}} \eta  \Big] v_{1} |v| \tilde{D}^{-\frac{1}{2}} \partial_{x_{2}} \eta
+ \Big[ (x_{2} - \varphi) + (\eta - \tilde{x}_{3}) \partial_{x_{2}} \eta \Big] \tilde{D}^{-\frac{1}{2}}|v| (v_{1 } \partial_{x_{1}} \eta - v_{3})
}{|v|_{2} -  \Big[ (x_{1} - \tilde{x}_{1}) + (x_{2} - \varphi) \partial_{\tilde{x}_{1}} \varphi  \Big] |v|v_{1} \tilde{D}^{-\frac{1}{2}}  -  \Big[  (x_{2} - \varphi) \partial_{\tilde{x}_{3}} \varphi + (\eta - \tilde{x}_{3}) \Big] |v| v_{3} \tilde{D}^{-\frac{1}{2}} }
 \\ 
 &= \frac{ - v_{1} \partial_{\tilde{x}_{1}} \varphi + v_{2} - v_{3} \partial_{\tilde{x}_{3}} \varphi  }{ v_{1} \partial_{x_{1}} \eta + v_{2} \partial_{x_{2}} \eta - v_{3} } = \frac{  -\big(  \partial_{\tilde{x}_{1}}\varphi, -1,  \partial_{\tilde{x}_{3}} \varphi  \big)\cdot v}{\big( \partial_{x_{1}} \eta, \partial_{x_{2}} \eta, -1 \big)\cdot v}\\
 &= -\frac{\sqrt{1+ |\nabla \eta|_{2}}}{\sqrt{1+ |\nabla \varphi|^{2}}} \times \frac{n(\tilde{x})\cdot v}{n(x) \cdot v}.
\end{split}
\end{equation}
Then we use (\ref{COV_lemma}) {(with $\tilde{x}_2$ replaced by $\tilde{x}_3$)} to conclude the proof.
\end{proof}

\begin{proof}[\textbf{Proof of Proposition \ref{double_iteration}}]

It suffices to prove the estimate (\ref{L1trace}). 

Using (\ref{bdry_h}), we obtain
\begin{eqnarray}
\int_0^t |h^{m+1} (s)|_{\gamma_-,1} &:=& \int_0^t
\iint_{n(x)\cdot v<0} | h^{m+1}(s,x,v)| |n(x)\cdot v| \mathrm{d}S_x \mathrm{d}v
\mathrm{d}s\nonumber\\ &\lesssim& \underline{\int_0^t \iint_{n(x)\cdot  {v}>0} |h^{m}(s,x, {v})|
\mu( {v})^{\frac{1}{4}} |n(x)\cdot  {v}| \mathrm{d}S_x \mathrm{d}v
\mathrm{d}s}\label{split:incoming_bdry_1}  \\
&&+
  \int_{0}^{t} \iint_{n(x)\cdot v<0} |R^{m}(s,x,v)| \{ 1+ |n(x)\cdot v|\} \mathrm{d}S_{x} \mathrm{d}v \mathrm{d}s \nonumber
\end{eqnarray}
Clearly the last term is bounded by the RHS of (\ref{L1trace}).

Focus on the underlined term. Recall the almost grazing set $\gamma_{+}^{\delta}$ and the non-grazing set $\gamma_{+} \backslash \gamma_{+}^{\delta}$ in (\ref{n-grazing}) and (\ref{non-grazing}). We split the outgoing part as 
\begin{eqnarray*}\notag 
\gamma_{+} \ = \  \gamma_{+}^{\delta} \ \cup \ \gamma_{+} \backslash \gamma_{+}^{\delta}.\end{eqnarray*}
Due to Lemma \ref{le:ukai}, the non-grazing {part $\gamma_{+}\backslash \gamma_{+}^{\delta}$ of the integral is bounded as
\begin{equation}\label{9_ii} 
\begin{split}
\int_0^t\iint_{\gamma_{+}\backslash \gamma_{+}^{\delta}}  & \  
 \lesssim_{t, \delta,\Omega} \ || h_0||_1 + \int_0^t \big\{ \big|\big|  
h^{m} (s)\big|\big|_1 + \big|\big|[\partial_t + v\cdot \nabla_x + \nu]  
h^{m}(s) \big|\big|_1\big\} \mathrm{d}s\\
& \ \lesssim_{t, \delta,\Omega} \ || h_0||_1 + \int_0^t  ||  
h^{m} ||_1 + \int_0^t || H^{m-1}  ||_1   .
\end{split}
\end{equation}}
For the almost grazing set $\gamma_{+}^{\delta}$, we claim that the following truncated term with a number $k \in\mathbb{N}$ is uniformly bounded in $k$ as
\begin{equation}\label{uniform_k}
\begin{split}
&\int^t_0 \iint_{ \substack{x\in\partial\Omega, \\ n(x)\cdot v>0}  } \mathbf{1}_{\{(x,v)\in\gamma_+^ \delta\}} \mathbf{1}_{\{1/k < |n(x_{\mathbf{b}}(x,v))\cdot v|\}} |h^{m}(s,x,v)| \mu(v)^\frac{1}{4}   \{n(x)\cdot v\}\mathrm{d}v\mathrm{d}S_x  \mathrm{d}s\\
\leq& \ O (\delta) \int_0^t  | h^{m-1}(s)|_{\gamma_+,1}  
 +  C_{ \delta} \Big\{    || h_{0}||_1 + \int_0^t ||h^{m-1}(s)||_1 +\int_0^t ||   H^{m-1}||_1       + t |   R^{m-1}|_{1}\Big\} .
 \end{split}
\end{equation}


In order to show (\ref{uniform_k}) we use the Duhamel formula of the equation (\ref{eqtn_h}) together with (\ref{bdry_h}): for $(x,v)\in \gamma_+^ \delta$ and $\frac{1}{k}<|n(x_{\mathbf{b}}(x,v))\cdot v|$ 
\begin{equation*}
\begin{split}
&| h^{m}(s,x,v)| \mathbf{1}_{\{(x,v)\in \gamma_+^ \delta\}} \mathbf{1}_{\{ 1/k < |n(x_{\mathbf{b}}(x,v))\cdot v |\}}\\
 \leq & \ \ \ \mathbf{1}_{\{s < t_{\mathbf{b}}(x,v)\}} | h_0(x-sv,v)| + \int_{\max\{0,s-t_{\mathbf{b}}(x,v) \}}^s | H^{m-1}(\tau,x-(s-\tau)v,v)|
\mathrm{d}\tau\\
& + \mathbf{1}_{\{s> t_{\mathbf{b}}(x,v)\}} \mathbf{1}_{\{ 1/k < |n(x_{\mathbf{b}}(x,v))\cdot v |\}} C_{1}\sqrt{\mu(v)} \left(1+ \frac{\langle
v\rangle }{|n(x_{\mathbf{b}}(x,v))\cdot v|}\right)\\
& \ \ \ \ \ \ \ \ \ \ \   \ \  \ \ \ \ \  \times  \int_{n(x_{\mathbf{b}}(x,v))\cdot v_1>0} | h^{m-1} (s-t_{\mathbf{b}}(x,v),x_{\mathbf{b}}(x,v),v_1)| \mu(v_1)^{\frac{1}{4}} \{n(x_{\mathbf{b}}(x,v) )\cdot v_1\}
\mathrm{d}v_1\\
& +\mathbf{1}_{\{s> t_{\mathbf{b}}(x,v)\}} \mathbf{1}_{\{ 1/k < |n(x_{\mathbf{b}}(x,v))\cdot v |\}}  \Big(1 + \frac{ e^{-C_{2} |v|^{2}} }{ |n(x_{\mathbf{b}}(x,v))\cdot v| }\Big)|R^{m-1}(s-t_{\mathbf{b}}(x,v) , x_{\mathbf{b}}(x,v),v) |.
\end{split}
\end{equation*}
We plug this estimate into the left hand side of (\ref{uniform_k}) to have
 \begin{eqnarray}
&& \int^t_0 \iint_{ \substack{x\in\partial\Omega, \\ n(x)\cdot v>0}  } \mathbf{1}_{\{(x,v)\in\gamma_+^\varepsilon\}} \mathbf{1}_{\{1/k < |n(x_{\mathbf{b}}(x,v))\cdot v|\}} |h^{m}(s,x,v)| \mu(v)^\frac{1}{4}   \{n(x)\cdot v\}\mathrm{d}v\mathrm{d}S_x  \mathrm{d}s\nonumber\\
&\leq&\int_0^t
\iint_{\gamma_+^ \delta}\mathbf{1}_{\{ 1/k < |n(x_{\mathbf{b}}(x,v))\cdot v |\}} | h_0(x-sv,v)| \mu(v)^{\frac{1}{4}}|n(x)\cdot
v| \mathrm{d}S_x \mathrm{d}v \mathrm{d}s\label{initial}\\
&+& \int_0^t
\iint_{\gamma_+^ \delta} \mathbf{1}_{\{ 1/k < |n(x_{\mathbf{b}}(x,v))\cdot v |\}}\mu(v)^{\frac{1}{4}} |n(x)\cdot v|\notag\\
&&   \ \ \ \ \ \  \ \ \ \ \ \ \ \ \ \ \   \times   \int_{\max\{0,s-t_{\mathbf{b}}(x,v)\}}^s   | H^{m-1}(\tau,x-(s-\tau)v,v) |
\mathrm{d}\tau\mathrm{d}S_x \mathrm{d}v \mathrm{d}s    \label{time_int}  \\
&+& \int_0^t \iint_{\gamma_+^\delta}
\mathbf{1}_{\{ 1/k < |n(x_{\mathbf{b}}(x,v))\cdot v |\}}\mu(v)^\frac{1}{2} \frac{ |n(x)\cdot v| }{|n(x_{\mathbf{b}}(x,v))\cdot v|} \int_{n(x_{\mathbf{b}}(x,v))\cdot
v_1>0}\1_{\{s>t_{\mathbf{b}}(x,v)\}} \notag
\\
&&   \ \ \ \ \ \  \ \ \ \ \ \ \ \ \ \ \   \times 
 | h^{m-1}(s-t_{\mathbf{b}}(x,v),x_{\mathbf{b}}(x,v),v_1)|
\mu(v_1)^{\frac{1}{4}}\{n(x_{\mathbf{b}}(x,v))\cdot v_1\}\mathrm{d}v_1\mathrm{d}S_x \mathrm{d}v \mathrm{d}s\notag \\ \label{main} \\
&+&
\int_0^t \iint_{\gamma_+^ \delta} \mathbf{1}_{\{ s> \tb(x,v) \}} \mathbf{1}_{\{ 1/k < |n(x_{\mathbf{b}}(x,v))\cdot v |\}} \mu(v)^{\frac{1}{4}}\frac{|n(x)\cdot
v|}{|n(x_{\mathbf{b}}(x,v))\cdot v|} \notag\\
&& \ \ \ \ \ \  \ \ \ \ \ \ \ \ \ \ \   \times  |R^{m-1}(s-t_{\mathbf{b}}(x,v), x_{\mathbf{b}}(x,v),v)| \mathrm{d}S_x \mathrm{d}v \mathrm{d}s.  \label{last}
\end{eqnarray}
 
 \bigskip

 \textit{Estimate of (\ref{initial})}: Note that $x\in\partial\Omega$ in (\ref{initial}). Without loss of generality we may assume that there exists $\eta: \mathbb{R}^2 \rightarrow \mathbb{R}$ {such that $x^{3} =  \eta(x^{1},x^{2})$}.
We apply the following change of variables: for fixed $v \in\mathbb{R}^3,$
\[
(x^1,x^2;s) \in \mathbb{R}^2 \times \{ 0 \leq s \leq \tb(x,v)\} \ \mapsto \ y=(x^1-sv^1,x^2-sv^2,\eta(x^1,x^2)-sv^3)  \in\bar\Omega.
\]
We compute the Jacobian:
\begin{equation}\nonumber
\begin{split}
\det\left( \frac{\partial (y^{1}, y^{2},y^{3})}{\partial( x^{1}, x^{2}, s)} \right)&= \det\left(\begin{array}{ccc}
1 & 0 & -v^{1} \\
0 & 1 & -v^{2} \\
\partial_{x^{1}}\eta(x^{1},x^{2}) & \partial_{x^{2}} \eta(x^{1},x^{2}) & -v^{3}
\end{array}\right) \\
 &=  v\cdot \left(\begin{array}{ccc} \partial_{x^{1}}\eta  \\ \partial_{x^{2}} \eta  \\ -1\end{array}\right) = v\cdot n\sqrt{1+|\partial_{x^{1}}\eta|^2 + |\partial_{x^{2}} \eta|^2}.
\end{split}
\end{equation}
Therefore
\[
\{v\cdot n(x)\} \mathrm{d}S_x \mathrm{d}s= \{v\cdot n(x)\} \sqrt{1+|\partial_{x^{1}}\eta|^2 + |\partial_{x^{2}} \eta|^2} \mathrm{d}x^{1} \mathrm{d}x^{2} \mathrm{d}s =
\mathrm{d}y = \ud y^{1} \ud y^{2} \ud y^{3} ,
\]
and
\begin{equation}\label{est:initial}
\begin{split}
(\ref{initial})& \leq \int_{\mathbb{R}^3} \mathrm{d}v \int_0^t \mathrm{d}s \int_{\partial\Omega} \mathrm{d}S_x \mathbf{1}_{\{(x,v) \in \gamma_+\}} \mathbf{1}_{\{1/k < |n(\xb(x,v))\cdot v|\}} |h_0(x-sv,v)| \mu(v)^\frac{1}{4} |n(x)\cdot v|\\
& \leq \int_{\mathbb{R}^3} \mathrm{d}v  \int_{\Omega} \mathrm{d}y
|h_0(y,v)| \mu(v)^{\frac{1}{4}} \\
& \leq || h_0||_1.
\end{split}
\end{equation}

\bigskip

\textit{Estimate of (\ref{time_int}):} Considering the region of $\big\{  (\tau, s) \in [0,t]\times [0,t] : \max \{ 0, s-\tb(x,v)  \} \leq \tau \leq s  \big\}$, 
\begin{equation}\label{term1}
(\ref{time_int}) \leq \int_{\mathbb{R}^3} \mathrm{d}v \int^t_{0} \mathrm{d}\tau \int_\tau^{\min\{t,\tau+\tb(x,v)\}} \mathrm{d}s \int_{\partial\Omega} \mathrm{d} S_x   |  H^{m-1} (\tau,x-(s-\tau)v,v) | \mu(v)^\frac{1}{4}|n(x)\cdot v|.
\end{equation}

Note that $x \in \partial\Omega.$ Without loss of generality we may assume that $x^{3} = \eta(x^{1},x^{2})$ for $\eta : \mathbb{R}^{2} \rightarrow \mathbb{R}$. We apply the change of variables: for fixed $v \in\mathbb{R}^3$ and $\tau \in [0,t],$
\[
(   x^{1}, x^{2}; s) \in   \mathbb{R}^2 \times [\tau, \min\{t,\tau+\tb(x,v)\}]  \ \mapsto \ y\equiv(x^{1} -(s-\tau)v^{1}, x^{2} -(s-\tau)v^{2}, \eta(x^{1},x^{2})-(s-\tau)v^{3}).
\]
The Jacobian is $\{v\cdot n(x)\} \sqrt{1+ |\partial_{x^{1}}\eta|^2 + |\partial_{x^{2}} \eta|^2}$ and $\{v\cdot n(x)\} \mathrm{d}s\mathrm{d}S_x \ \leq \ \mathrm{d}y.$ Applying the change of variables to (\ref{term1}) to have  
\begin{equation}
(\ref{time_int}) \leq\int^t_0  \int_{\mathbb{R}^3} \int_{\Omega}  |  H^{m-1}(\tau,y,v) |  \mu(v)^\frac{1}{4} \mathrm{d}y\mathrm{d}v \mathrm{d}\tau.\label{est:time_int}
\end{equation}

\bigskip

\textit{Estimate of (\ref{main}):} This part is the most delicate among (\ref{initial})$\sim$(\ref{last}). Rewrite (\ref{main}) as
\begin{equation}\label{main_detail}
\begin{split}
&\int_0^t
\mathrm{d}s \int_{\partial\Omega} \mathrm{d}S_x \int_{\mathbb{R}^3} \mathrm{d}v \int_{\mathbb{R}^3} \mathrm{d}v_1 \
\mathbf{1}_{\{(x,v) \in \gamma_+^\varepsilon\}} \mathbf{1}_{\{n(x_{\mathbf{b}}(x,v)  )\cdot v_1>0\}} \mathbf{1}_{\{s>t_{\mathbf{b}}(x,v)\}} \mathbf{1}_{\{   |n(x_{\mathbf{b}}(x,v))\cdot v| > 1/k\}} \\ & \ \ \ \ \ \ \  \    \ \  \times \mu(v_1)^{\frac{1}{4}} \mu(v)^\frac{1}{2} \frac{|n(x)\cdot
v|}{|n(x_{\mathbf{b}}(x,v) )\cdot v|}
|n(x_{\mathbf{b}}(x,v))\cdot v_1| |h^{m-1} (s-t_{\mathbf{b}}(x,v),x_{\mathbf{b}}(x,v),v_1)|. 
\end{split}
\end{equation}

First we apply the following change of variables
\begin{equation}\label{COV_time}
s\in [0,t] \ \mapsto \ \tilde{s} = s-t_{\mathbf{b}}(x,v)\in [0,t],
\end{equation}
where we have used the fact that $s$ is integrated over $[t_{\mathbf{b}}(x,v), t]$. Clearly the Jacobian is $1$ so that $\mathrm{d}\tilde{s}=\mathrm{d}s$ and hence
\begin{equation}\label{main1}
\begin{split}
(\ref{main_detail})&\leq  \int_0^t
\mathrm{d}\tilde{s} \int_{\partial\Omega} \mathrm{d}S_x \int_{\mathbb{R}^3} \mathrm{d}v \int_{\mathbb{R}^3} \mathrm{d}v_1 \
\underbrace{\mathbf{1}_{\{(x,v) \in \gamma_+^ \delta\}}} \ \mathbf{1}_{\{n(x_{\mathbf{b}}(x,v))\cdot v_1>0\}}   \mathbf{1}_{\{   |n(x_{\mathbf{b}})\cdot v| > 1/k\}}
  \\ & \ \ \ \ \ \ \ \ \ \ \ \ \ \ \ \ \ \ \ \ \ \ \ \ \ \ \times \mu(v_1)^{\frac{1}{4}} \mu(v)^\frac{1}{2} \frac{|n(x)\cdot
v|}{|n(x_{\mathbf{b}}(x,v))\cdot v|}
|n(x_{\mathbf{b}}(x,v))\cdot v_1| | h^{m-1}(\tilde{s},x_{\mathbf{b}}(x,v),v_1)|.
\end{split}
\end{equation}
Let us denote
\begin{equation}\label{x1}
\tilde{x}  : = \xb(x,v).
\end{equation} 
Note that since $(x,v)\in\gamma_+$ and $|n(x_{\mathbf{b}}(x,v))\cdot v|  > 1/k$, from Lemma \ref{COV}, the mapping $(x,v)\mapsto (\tilde{x} ,v)$ is one-to-one and
\begin{equation}\notag
\begin{split}
t_{\mathbf{b}}(x,v) \ =& \ t_{\mathbf{b}}(x_{\mathbf{b}}(x,v),-v),\\
x \ =& \ \xb(x,v)+ \tb(x,v) v = \xb(x,v) + \tb( \xb(x,v) ,-v)v = \xb(x,v)-\tb( \xb(x,v),-v)(-v)\\
 =& \ \tilde{x}  -\tb(\tilde{x},-v)(-v),
\end{split}
\end{equation}
and hence we can rewrite the underbraced term in (\ref{main1}) as
\begin{equation}
 {\1_{\{(x,v)\in\gamma_+^ \delta\}}} \ = \ \1_{\{0<n(\tilde{x}-\tb(\tilde{x},-v)(-v))\cdot v< \delta   \ \text{or} \  |v|> 1/\delta\}}.\label{nx1}
\end{equation}

Now we apply the change of variables of Lemma \ref{COV}: for $(x,v)\in\gamma_+$ and $|n(\xb(x,v))\cdot v|= |n(\tilde{x})\cdot v|> 1/k$, we apply the change of variables 
\begin{equation} \label{COV_space}
(x,v)\mapsto
(\tilde{x},v) := (\xb(x,v),v).
\end{equation}
From Lemma \ref{COV}, the Jacobian is
$$\det\left(\frac{\partial(\tilde{x},v)}{\p(x,v)}\right)=\det\left(\frac{\partial
\tilde{x}}{\partial x}\right) = \left|\frac{n(x)\cdot
v}{n( \tilde{x})\cdot v}\right|, \ \ \ \text{and} \ \ \  \mathrm{d}S_{\tilde{x}}:= \left|\frac{n(x)\cdot v}{n(\tilde{x})\cdot v}\right|
\mathrm{d}S_x.$$
 Then from (\ref{main1}) and (\ref{nx1}), 
\begin{equation}\label{main2}
\begin{split}
{(\ref{main_detail}) \leq}& \int^t_0 \ud \tilde{s} \int_{\R^3} \ud v_1 \int_{\R^3} \ud v \int_{\p\Omega} \ud S_{ \tilde{x}}
\big\{\1_{0 < n(\tilde{x} -\tb(\tilde{x} ,-v)(-v))\cdot v < \delta}+\1_{|v|>1/\e}\big\}\\
&  \ \ \  \ \ \ \ \ \ \ \ \ \ \ \ \  \times \1_{\{n( \tilde{x} )\cdot v_1 >0\}} \mathbf{1}_{\{|n( \tilde{x}  )\cdot v|> 1/k\}} \mu(v)^{\frac{1}{2}} \mu(v_1)^{\frac{1}{4}} |n( \tilde{x} )\cdot v_1| \  | h^{m-1} (\tilde{s},\tilde{x} ,v_1)|\\
\leq& \int_0^t\iint_{\gamma_+ } | h^{m-1} (\tilde{s}, \tilde{x},v_1)| \mu(v_1)^\frac{1}{4} |n(\tilde{x})\cdot v_1|\mathrm{d}S_{\tilde{x}} \mathrm{d}v_1 \ud \tilde{s}\\
  & \ \ \ \ \ \ \ \ \ \ \ \ \ \ \ \ \ \ \ \ \ \ \times \underbrace{\sup_{\tilde{x}\in\partial\Omega}\int_{\R^3}  \1_{\{- \delta< n(\tilde{x}-\tb(\tilde{x},-v)(-v))\cdot (-v)< 0\}} \mu(v)^\frac{1}{2}\ud v}\\
& + O (  \delta) \int^t_0 \iint_{\gamma_+} | h^{m-1} (\tilde{s},\tilde{x},v_1)| \mu(v_1)^\frac{1}{4}  |n(\tilde{x})\cdot v_{1}| \mathrm{d}S_{\tilde{x}} \mathrm{d} v_{1} \ud \tilde{s},
\end{split}
\end{equation}
where $O(\delta) \sim \int_{\mathbb{R}^{3} }  \mathbf{1}_{|v|> 1/  \delta} \mu(v)^{\frac{1}{2}} \mathrm{d}v.$

We claim that, for any small $0<  \delta^\prime\ll 1$, we can choose sufficiently small $0< \delta\ll 1$ such that
\begin{equation}\label{small}
\sup_{\tilde{x}\in\partial\Omega}\int_{\R^3}  \1_{\{- \delta< n(\tilde{x} - \tb(\tilde{x}, -v)(-v)  )\cdot (-v)< 0\}} \mu(v)^{\frac{1}{2}}\ud v \leq \delta^\prime.
\end{equation}
This is consequence of Lemma \ref{guo_covering}. For given $ \delta^\prime>0$, we choose a sufficiently large $N \gg \frac{1}{ \delta^\prime}$ and we take $\delta_{ \delta^\prime, N}>0$ as in Lemma \ref{guo_covering}. Then we choose a sufficiently small $ \delta=\delta( \delta^\prime, N)>0$ such that $ \delta \ll \delta_{ \delta^\prime,N }$ in Lemma \ref{guo_covering}. Due to Lemma \ref{guo_covering} and (\ref{guo_covering_inclusion}),
$$ \max_{i}\sup_{\tilde{x} \in B(x_i;r_i)}\mathrm{m}_3\{ v \in \mathbb{R}^3 : |v|\leq N, \ | n(\xb(\tilde{x},-v))\cdot (-v) | \leq  \delta\} \ \leq  \   \max_{i}\mathrm{m}_3(\mathcal{O}_{x_i})\leq  \delta^\prime.$$
Finally we conclude the claim (\ref{small}) by
\begin{eqnarray*}\nonumber
&& \int_{\mathbb{R}^3} \mathbf{1}_{\{- \delta < n(\xb(\tilde{x},-v))\cdot(-v)<0\}} \mu(v)^\frac{1}{2} \mathrm{d}v \\ 
&=& \int_{|v|\geq N} + \int_{|v|\leq N}\\
 &\leq& e^{-N^2/4} + \max_i \mathrm{m}_3 (\mathcal{O}_i)\\
 &\leq&  e^{-\frac{1}{4 ( \delta^{\prime})^{2}}} + \delta^\prime.
\end{eqnarray*}

Therefore, from (\ref{main1}), (\ref{main2}), (\ref{small}), we have, for $0 <  \delta,  \delta^\prime \ll 1,$
\begin{equation}\label{est:main}
\begin{split}
(\ref{main}) & \lesssim [O( \delta) + O( \delta^\prime)] \times \int^t_0 \int_{\gamma_+} |h^{m-1}(\tilde{s},\tilde{x},v_1)|\mu(v_1)^\frac{1}{4}
|n(\tilde{x})\cdot v_{1}| \ud S_{\tilde{x}} \ud v_{1}
  \ud \tilde{s}.
\end{split}
\end{equation}

\bigskip

\textit{Estimate of (\ref{last})}: We apply the change of variables (\ref{COV_time}) and then apply (\ref{COV_space}) and use Lemma \ref{COV} to bound 
\begin{equation}\label{est:last}
\begin{split}
(\ref{last})&\lesssim \int^t_0 \mathrm{d}s \int_{\partial\Omega} \int_{\mathbb{R}^3}  
\mu(v)^{\frac{1}{4}}  |R^{m-1}(\tilde{s}, \tilde{x}, v)|
\mathrm{d}S_{\tilde{x}}  \mathrm{d}v \mathrm{d}\tilde{s}.
\end{split}
\end{equation}

\bigskip

Finally from (\ref{est:initial}), (\ref{est:time_int}), (\ref{est:main}) and (\ref{est:last}), we prove our claim (\ref{uniform_k}).
 
\bigskip

The last step is to pass a limit $k\rightarrow \infty.$ Clearly the sequence is non-decreasing in $k$:
$$0 \leq \mathbf{1}_{\{\frac{1}{ k } < |n(\xb(x,v))\cdot v|\}} |h^{m}(s,x,v)| \leq \mathbf{1}_{\{\frac{1}{ k+1 } < |n(\xb(x,v))\cdot v|\}} |h^{m}(s,x,v)|.$$

We claim, as $k \rightarrow \infty,$
\[ \mathbf{1}_{\{\frac{1}{k } < |n(\xb(x,v))\cdot v|\}} \mu(v)^\frac{1}{4}| h^{m}(s,x,v)| \ \rightarrow \ \mu(v)^\frac{1}{4}
| h^{m}(s,x,v)|  \ \ \ \text{a.e.} \ (x,v)\in\gamma_+ \ \text{with} \ \mathrm{d}\gamma. \]
It suffices to show $\mathbf{1}_{\{\frac{1}{k } < |n(\xb(x,v))\cdot v|\}} \mu(v)^\frac{1}{4} \rightarrow \mu(v)^\frac{1}{4}$ a.e. on $\gamma_{+}$. For $\varepsilon>0$ and $N\gg\varepsilon^{-1}\gg 1$, choose $k \gg 1$ such that $\frac{1}{k}<
\delta_{\e,N}$ in Lemma \ref{guo_covering}. Then \begin{equation}\nonumber \begin{split}
&\Big[1-\1_{\{|n(\xb(x,v))\cdot v|>\frac{1}{k}\}}(x,v)\Big]\mu(v)^\frac{1}{4}\\
 &\leq \max_{1\leq i \leq
l_{\e,N ,\Omega}} \1_{B(x_i;r_i)}(x) \times \1_{\{|n(\xb(x,v))\cdot
v|\leq\frac{1}{k}\}} \mu(v)^\frac{1}{4} \\
& \leq \max_{1\leq i \leq
l_{\e,N, \Omega}} \1_{B(x_i;r_i)}(x) \times \1_{\{|n(\xb(x,v))\cdot
v|\leq\delta_{\e,N }\}} \mu(v)^\frac{1}{4} \\
& \leq \max_{1\leq i \leq
l_{\e,N, \Omega}} \1_{B(x_i;r_i)}(x) \times \Big\{\1_{\{|v| \leq N , v \in
\mathcal{O}_i\}}(v)  \mu(v)^\frac{1}{4} + \1_{|v|\geq N}(v) e^{-\frac{N^2}{16}}\mu(v)^\frac{1}{8} \Big\}, \end{split}
\end{equation}
and hence
\begin{equation}\notag
\begin{split}
&\int_{\gamma_+}
|1-\1_{\{|n(\xb(x,v))\cdot v| >\frac{1}{k}\}}(x,v)| \mu(v)^{\frac{1}{2}}\ud \gamma\\
&\leq \max_{1\leq i \leq l_{\e,N, \Omega}} \int_{\partial\Omega} \int_{n\cdot
v>0} \1_{\{|v|\leq N, v\in \mathcal{O}_i\}}\ud v\ud S_x + O(\frac{1}{N})\\
&\lesssim  \e + O(\frac{1}{N}) \lesssim \varepsilon, \end{split} \end{equation}
which concludes the claim.

Now we use the monotone convergence theorem to conclude
\begin{equation}\nonumber
\begin{split}
 \int_0^t \int_{\gamma_+^\varepsilon} \mathbf{1}_{\{1/k < |n(\xb(x,v))\cdot v|\}}|h^{m}(s,x,v)| \mu(v)^\frac{1}{4} \mathrm{d}\gamma \mathrm{d}s \rightarrow \int_0^t \int_{\gamma_+^\varepsilon} | h^{m}(s,x,v)| \mu(v)^\frac{1}{4} \mathrm{d}\gamma \mathrm{d}s,
\end{split}
\end{equation}
as $k \rightarrow \infty$
and therefore $\int_0^t \int_{\gamma_+^\varepsilon} | h^{m}(s,x,v)| \mu(v)^\frac{1}{4} \mathrm{d}\gamma \mathrm{d}s$ has the same upper bound of (\ref{uniform_k}). Together with (\ref{9_ii}) we conclude (\ref{L1trace}).
\end{proof}

\section{$\varepsilon-$Neighborhood of the Singular set}

 In this section, we construct an open covering of the singular set $\mathfrak{S}_{\mathrm{B}}$ (proof of Lemma \ref{open_cover}) and a smooth function that cuts off the open covering of $\mathfrak{S}_{\mathrm{B}}$ (Definition 1). Moreover, we prove their crucial properties in Lemma \ref{open_cover}, Lemma \ref{cut_off}, and Lemma \ref{small_boundary_lemma}. 
 
\begin{lemma}\label{open_cover}
For $0< \varepsilon\leq \varepsilon_{1} \ll1$ and $\theta>0,$ we construct an open set $\mathcal{O}_{\varepsilon, \varepsilon_{1}}   \subset   \bar{\Omega}\times \mathbb{R}^3,$
such that, 
\begin{equation}\label{SinO}
\mathfrak{S}_{\mathrm{B}} \  \subset \   {{\mathcal{O}}_{\varepsilon,\varepsilon_{1} }}.
\end{equation}
There exists ${C}_{*}={C}_{*}(\Omega)\gg1$ such that for any $0< \varepsilon \leq \varepsilon_{1}\ll1$  
\begin{equation}\label{barO_O}
\overline{{\mathcal{O}}_{  \varepsilon, \varepsilon_{1} }}  \ \subset \ \mathcal{O}_{\varepsilon, {C}_{*} \varepsilon_{1}}.
\end{equation}
Moreover {there exist} $C_1=C_1(\theta,\Omega, C_{*})>0, \ C_2=C_2(\Omega, C_{*})>0,$ such that 
\begin{equation}\label{small_bulk}
 \iint_{\Omega\times\mathbb{R}^3} \mathbf{1}_{\mathcal{O}_{\varepsilon, {C}_{*} \varepsilon}}(x,v)  
e^{-\theta |v|^{2}}
 \mathrm{d}v\mathrm{d}x < C_{1} \varepsilon,
 \end{equation}
 and 
 \begin{equation}\label{dist_ep}
 \mathrm{dist}\big(  \bar{\Omega} \times \mathbb{R}^3  \backslash \mathcal{O}_{\varepsilon,  {C}_{*}\varepsilon }, \mathfrak{S}_{\mathrm{B}}\big)> C_2   \varepsilon .
\end{equation}
\end{lemma}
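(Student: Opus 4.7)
\medskip

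\noindent\textbf{Proposal.} My plan is to construct $\mathcal{O}_{\varepsilon,\varepsilon_1}$ as a union of phase-space tubes around the forward characteristics that emanate from the set of near-grazing boundary configurations. Set
\[
G_{\varepsilon_1} := \{(y,w) \in \partial\Omega\times \R^3 : |n(y)\cdot w| < \varepsilon_1\},
\]
and define
\[
\mathcal{O}_{\varepsilon,\varepsilon_1} := \bigcup_{(y,w)\in G_{\varepsilon_1}} \; \bigcup_{0 \leq t \leq t_{\mathbf{f}}(y,w)} \bigl[\, B(y+tw;\varepsilon) \times B(w;\varepsilon) \,\bigr] \cap (\bar\Omega\times\R^3).
\]
This set is open (as a union of open sets), and \eqref{SinO} is immediate: for $(x,v)\in \mathfrak{S}_{\mathrm{B}}$, the base point $(\xb(x,v),v)$ lies in $G_0\subset G_{\varepsilon_1}$ and $x=\xb(x,v)+\tb(x,v)v$ lies on the corresponding forward trajectory.

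\medskip

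\noindent The inclusion \eqref{barO_O} will be a continuity/compactness argument. A point in $\overline{\mathcal{O}_{\varepsilon,\varepsilon_1}}$ is the limit of points $(x_n,v_n)$ associated with base data $(y_n,w_n)\in G_{\varepsilon_1}$ and times $t_n\in[0,t_{\mathbf{f}}(y_n,w_n)]$. Restricting to a fixed bounded velocity range (the case $|w_n|\to\infty$ is handled separately since the corresponding trajectory is short), we may pass to a subsequence to obtain a limiting $(y,w)$ with $|n(y)\cdot w| \leq \varepsilon_1$. Absorbing constants from the bounds of $\nabla n$ and from the continuity of $t_{\mathbf{f}}$ off the grazing set into a single $C_*=C_*(\Omega)$, the limit point lies in the interior of $\mathcal{O}_{\varepsilon,C_*\varepsilon_1}$. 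For \eqref{dist_ep} with $\varepsilon_1 = C_*\varepsilon$, any $(x,v) \notin \mathcal{O}_{\varepsilon,C_*\varepsilon}$ is at phase-space distance at least $\varepsilon$ from every forward trajectory starting in $G_{C_*\varepsilon}\supset G_0$; since $\mathfrak{S}_{\mathrm{B}}$ is a subset of these trajectories, the distance from $\mathfrak{S}_{\mathrm{B}}$ is $\gtrsim\varepsilon$.

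\medskip

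\noindent The main obstacle will be the measure estimate \eqref{small_bulk}. The natural approach is Fubini in $v$. For fixed $v$, the $x$-section of $\mathcal{O}_{\varepsilon,C_*\varepsilon}$ consists of those $x\in\bar\Omega$ reachable within distance $\varepsilon$ from some forward trajectory starting at $(y,w)\in G_{C_*\varepsilon}$ with $|w-v|<\varepsilon$. Using the parametrization \eqref{boundary_graph} of $\partial\Omega$ in $C^2$ charts, for each fixed $w$ close to $v$, the set $\{y\in\partial\Omega: |n(y)\cdot w|<C_*\varepsilon\}$ is a strip of surface area $O(\varepsilon/|w|)$ for $|w|\gtrsim 1$ (since the boundary is $C^2$ and $n$ varies smoothly). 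The forward trajectory from $(y,w)$ has length $t_{\mathbf{f}}(y,w)\cdot|w|\leq \mathrm{diam}(\Omega)$, and the $\varepsilon$-tube around a single trajectory has $x$-cross-section of area $O(\varepsilon^2)$. Combining these, the $x$-measure of the section is $O(\varepsilon^3/|w|)$ for $|w|\geq 1$ (and $O(\varepsilon^2)$ for small $|w|$). After integrating against $e^{-\theta|v|^2}\,\mathrm{d}v$, the Gaussian weight tames the large-$|v|$ regime and yields the desired $O(\varepsilon)$ bound.

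\medskip

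\noindent The hardest part of the above is controlling the multiplicity: a single $(x,v)$ may be covered by many tubes corresponding to distinct grazing trajectories, especially near points where several characteristics of $\mathfrak{S}_{\mathrm{B}}$ accumulate in non-convex regions. I expect this to be addressed by localizing in finitely many boundary charts, fixing local coordinates so that the grazing condition $n(y)\cdot w=0$ becomes a smooth constraint $F(y,w)=0$ with non-degenerate gradient in the direction $n(y)$, and then using the coarea formula to transform the $x$-integration along trajectories into a simple integration over $\partial\Omega\times\R^3$ modulo $G_{C_*\varepsilon}$. This is the step where the hypothesis of $C^2$ regularity of $\partial\Omega$ is used crucially, and where one must verify that the conversion Jacobian does not blow up as $|n(y)\cdot w|\to 0$.
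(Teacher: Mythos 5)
Your construction is a genuinely different route from the paper's, and the key measure estimate \eqref{small_bulk} has a gap that I don't think can be patched by the coarea argument you sketch.

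The crux of your measure estimate is the claim that, for fixed $w$ with $|w|\gtrsim 1$, the set $\{y\in\partial\Omega : |n(y)\cdot w| < C_*\varepsilon\}$ is ``a strip of surface area $O(\varepsilon/|w|)$.'' This is false for a general $C^2$ bounded domain. The tangential gradient of $y\mapsto n(y)\cdot w$ is $\mathrm{II}(y)(\tau,w)$, i.e.\ the second fundamental form paired with $w$; when the boundary contains a flat patch (or more generally a region whose normal is constant along a tangential direction transverse to $w$), this gradient vanishes identically there, and the ``strip'' fattens to a set of $O(1)$ surface area. Concretely, for $\Omega$ a smoothed cube and $w$ parallel to a face, $n(y)\cdot w \equiv 0$ on that entire face. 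The lemma assumes only $C^2$ regularity, so nothing excludes this. Everything downstream in your estimate --- the $O(\varepsilon^3/|w|)$ section bound and the final $O(\varepsilon)$ --- hinges on this broken inequality. The multiplicity/coarea step you describe but do not carry out could only make things worse, since the coarea Jacobian $|n(y)\cdot w|$ you would need in the denominator is precisely the small quantity.

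The paper sidesteps this entirely by \emph{discretizing} rather than parametrizing by a continuum: it covers $\partial\Omega$ by $O(1/\varepsilon^2)$ $\varepsilon$-rectangles $\mathcal{R}_{m,i,j,\varepsilon,\varepsilon_1}$ anchored on a lattice, and for each rectangle it splits the tangential velocity directions into $O(1/\varepsilon)$ angular sectors $\Theta_{m,i,j,\varepsilon,\varepsilon_1,\ell}$ measured against the \emph{fixed} reference normal $n_{m,i,j,\varepsilon}$ at the lattice center. Each piece $\mathcal{O}_{m,i,j,\varepsilon,\varepsilon_1,\ell}$ is then a product of an $\varepsilon$-cone (forward trajectories from the rectangle in nearly one direction, thickened normally) with a thin angular wedge, and one checks each piece has Gaussian-weighted measure $O(\varepsilon^4)$ by the elementary bound \eqref{v_e12}. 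Summing over the $O(1/\varepsilon^3)$ pieces gives $O(\varepsilon)$. Crucially, this count never needs to measure the size of $\{n(y)\cdot w \approx 0\}$: the construction covers a tubular neighborhood of the grazing set regardless of its intrinsic size, and the bound comes purely from counting tubes. It is exactly this robustness to degenerate curvature that your continuum parametrization lacks. Separately, your argument for \eqref{barO_O} leans on passing to limits with $t_n\in[0,t_{\mathbf{f}}(y_n,w_n)]$; since $t_{\mathbf{f}}$ is discontinuous near grazing, this requires care that the paper handles by taking closures of the finitely many discretized pieces directly (the inclusions \eqref{delta_incls}, \eqref{theta_incls}).
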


\begin{proof}

\noindent \textit{Construction of $\mathcal{O}_{\varepsilon,\varepsilon_{1}}$:} 
Let us fix $\delta>0$ ($\delta$ will be chosen later in (\ref{choice_delta})). Since the boundary $\partial\Omega$ is locally a graph of smooth functions, there exists a finite number $M_{\Omega,\delta}$ of small open balls $\mathcal{U}_{1},\mathcal{U}_{2},..,\mathcal{U}_{M_{\Omega,\delta}}\subset \mathbb{R}^{3}$ with $\mathrm{diam}(\mathcal{U}_{m})<2\delta$ for all $m$, such that 
\begin{equation}\label{boundary_decompose}
\partial\Omega \ \subset \ \bigcup_{m=1}^{M_{\Omega,\delta}} [  \ \mathcal{U}_{m} \ \cap \ \partial \Omega \ ] \ \ \text{
with}  \ \ M_{\Omega, \delta}= O(\frac{1}{\delta^{2}}),
\end{equation}
and for every $m$, inside $\mathcal{U}_m$ the boundary $\mathcal{U}_{m} \ \cap \ \partial \Omega$ is exactly described by a smooth function $\eta_m$ defined on a (small) open set $\mathcal{A}_{m}\subset\R^2$.

For all $m$, without loss of generality (up to rotations and translations \textit{depending on} $m$, and up to reducing the size of the ball $\mathcal{U}_m$) we will always assume that
\begin{eqnarray}\label{Um}
\mathcal{U}_{m} \cap \partial\Omega &=&\big\{(x_{1},x_{2}, \eta_{m}  (x_{1},x_{2})) \in \mathcal{A}_{m} \times \mathbb{R}  \big\},\\
\mathcal{U}_{m} \cap  \Omega &=&\big\{
(x_{1},x_{2},x_{3}) \in \mathcal{A}_{m} \times \mathbb{R} : x_{3} > \eta_{m} (x_{1},x_{2})  \big\},\nonumber
\end{eqnarray}
and 
$$(0,0) \in \mathcal{A}_{m} \ \subset_{\text{open}} \  [-\delta,\delta] \times [-\delta,\delta],$$
$$\partial_{1} \eta_{m}(0,0) =0 = \partial_{2} \eta_{m}(0,0).$$
Therefore
\[
n(0,0, \eta (0,0)) = \frac{1}{\sqrt{1+ |\partial_{1} \eta_{m} (0,0)|^{2} + |\partial_{2} \eta_{m}(0,0)|^{2}  }} (\partial_{1} \eta_{m} (0,0), \partial_{2} \eta_{m} (0,0),-1) = (0,0,-1).
\] 

Recall that $\partial\Omega$ is locally $C^{2}$. Then we can choose $\delta>0$ small enough to satisfy {for all $m\in \{1,..,M_{\Omega,\delta}\}$}
\begin{equation}\label{choice_delta}
\begin{split}
&|\partial_{1} \eta_{m} (x_{1},x_{2})-\partial_{1} \eta_{m} (0,0) |+|\partial_{2} \eta_{m} (x_{1},x_{2}) -\partial_{2} \eta_{m} (0,0) | \\
=& |\partial_{1} \eta_{m} (x_{1},x_{2})  |+|\partial_{2} \eta_{m} (x_{1},x_{2}) | 
\leq \frac{1}{8} \ \ \ \ \text{for} \  (x,y)\in \mathcal{A}_{m},
\end{split}
\end{equation}
and
\begin{equation}\label{eta_C2}
| \partial_{1}^{2} \eta_{m}(x_{1},x_{2}) | + |\partial_{2}^{2} \eta_{m} (x_{1}, x_{2})| + |\partial_{1} \partial_{2} \eta_{m}(x_{1},x_{2})| \leq C_{\eta} \ \ \ \ \text{for} \ (x,y) \in \mathcal{A}_{m}.
\end{equation}

Now we define the lattice point on $\mathcal{A}_{m}$ as
\begin{equation}\label{lattice}
c_{m,i,j,\varepsilon} := (\varepsilon i, \varepsilon j) \ \ \ \text{for} \ 
-N_{\varepsilon} \leq i,j \leq N_{\varepsilon} = O(\frac{\delta}{\varepsilon}).
 \end{equation}
Then we define the $(i,j)$-\textit{rectangular} $\mathcal{R}_{m,i,j,\varepsilon,\varepsilon_{1}}$ which is centered at $c_{m,i,j,\varepsilon}$ and whose side is $2\varepsilon_{1}$:
\begin{equation}\label{Rij}
\mathcal{R}_{m,i,j,\varepsilon,\varepsilon_{1}} :=  \Big\{ (x_{1},x_{2})\in  \big( \varepsilon  i -\varepsilon_{1},    \varepsilon  i +\varepsilon_{1} \big)\times   \big( \varepsilon  i -\varepsilon_{1},    \varepsilon  i +\varepsilon_{1} \big) \Big\} \ \cap \ {\mathcal{A}_{m}}.
\end{equation}  
Note that if $\varepsilon_{1} \geq \varepsilon$ then $\{  \mathcal{R}_{m,i,j,\varepsilon,\varepsilon_{1}} \}$ is open covering of $\mathcal{A}_{m}$, i.e.
\begin{equation}
\mathcal{A}_{m}  \subset \bigcup_{ -N _{\varepsilon}\leq i,j \leq N_{\varepsilon}} \mathcal{R}_{m,i,j,\varepsilon,\varepsilon_{1}} \ \ \ \ \text{with}  \ \ \ N_{\varepsilon} = O(\frac{\delta}{\varepsilon}).\label{A_R}
\end{equation}
For each rectangle we define the representative outward normal 
\[
n_{m,i,j,\varepsilon}:= \frac{1}{\sqrt{1+ |\partial_{1} \eta_{m} (c_{m, i,j,\varepsilon})|^{2} + |\partial_{2} \eta_{m}( c_{m, i,j,\varepsilon})|^{2}  }} (\partial_{1} \eta_{m} ( c_{ m,i,j,\varepsilon}), \partial_{2} \eta _{m}( c_{ m,i,j,\varepsilon}),-1).
\] 
{ Let $\{\hat{x}_{1,m,i,j,\varepsilon},\hat{x} _{2,m,i,j,\varepsilon}\}\subset\mathbb{S}^{2}$ be an orthonormal basis of the tangent space of $\partial\Omega$ at $(c_{m,i,j,\varepsilon}, \eta_{m}(c_{m,i,j,\varepsilon}))$. Remark that the three vectors $\hat{x}_{1,m,i,j,\varepsilon}$, $\hat{x} _{2,m,i,j,\varepsilon},$ and $n_{m,i,j,\varepsilon}$ are fixed for each $m,i,j,\varepsilon$ and that $\{ \hat{x}_{1,m, i,j,\varepsilon},\hat{x} _{2,m, i,j,\varepsilon}, n_{m,i,j,\varepsilon} \}$ is an orthonormal basis of $\mathbb{R}^{3}$.}

We split the tangent velocity space at $(c_{m,i,j,\varepsilon}, \eta_{m}(c_{m,i,j,\varepsilon}))\in\partial\Omega$ as 
\[ 
\big\{ v \in \mathbb{R}^{3} :    v\cdot n_{ m,i,j, \varepsilon} =0  \big\} \   \subseteq  { \  \bigcup_{\ell=0}^{L_{\varepsilon}}  {\Theta}_{m, i,j,\varepsilon,\varepsilon_{1}, \ell},} \ \ \ \text{with} \ \ L_{\varepsilon} = O(\frac{1}{\varepsilon}),
\]   
where  
\begin{equation}\label{Theta}
\begin{split}
&  {\Theta} _{m, i,j,\varepsilon,\varepsilon_{1},\ell}  \\
& : = 
 \Big\{    
 r_{v} \cos \theta_{v} \cos\phi_{v} \hat{x}_{1,m, i,j,\varepsilon} 
 + r_{v}  \sin\theta_{v} \cos\phi_{v} \hat{x}_{2,m, i, j,\varepsilon} 
 + r_{v} \sin \phi_{v}  n_{m, i,j,\varepsilon} \in \mathbb{R}^{3} : \\
 & \ \ \ \ \ \    \ \  \ \ \ \ \ \  \ \ \ \ 
 |r_{v} \sin\phi_{v}| < 8 C_{\eta} \varepsilon_{1} \     \text{for} \ r_{v} \in [0,1],   \ \  |  \sin\phi_{v}| < 8 C_{\eta} \varepsilon_{1}      \ \text{for} \ r_{v} \in [1,\infty),     \\ 
  & \ \ \ \ \ \    \ \  \ \ \ \ \ \  \ \ \ \  |\theta_{v}- \varepsilon \ell| < \varepsilon_{1}      \ \text{for} \ r_{v} \in [0,\infty)
  \Big\},
 \end{split}
\end{equation} 
with the constant $C_{\eta}>0$ from (\ref{eta_C2}).
 
Remark that {for $\varepsilon_1\ge\varepsilon$,
\begin{equation}\label{in_Theta} 
 \bigcup_{\ell=0}^{L_{\varepsilon}}  {\Theta}_{m, i,j,\varepsilon ,\varepsilon_{1},\ell} 
 =\left\{ v\in\R^3: \ 
 \begin{array}{c} 
 |v \cdot n_{m,i,j,\varepsilon}| < 8 C_{\eta} \varepsilon_{1}  \text{ for }   |v|\leq 1,    
 \\
   \text{ or }  \  \big| \frac{v}{|v|}\cdot n_{m,i,j,\varepsilon}\big|< 8 C_{\eta} \varepsilon_{1}  \text{  for } |v|\geq 1 
 \end{array}
   \right\}. 
\end{equation}}

Now we are ready to construct {the desired} open cover corresponding to $\mathcal{R}_{m,i,j,\varepsilon,\varepsilon_{1}} \times \Theta_{m,i,j,\varepsilon,\varepsilon_{1},\ell}$ as 
\begin{equation}\label{O_mijl}
\begin{split}
\mathcal{O}_{m,i,j,\varepsilon,\varepsilon_{1},\ell}  \ : = \ & \Big[\bigcup_{x \in \mathcal{X}_{m,i,j,\varepsilon,\varepsilon_{1} ,\ell  }} B_{\mathbb{R}^{3}}( x; \varepsilon_{1})\Big] \times \Theta_{m,i,j,\varepsilon,\varepsilon_{1},\ell},
\end{split}
\end{equation}
where  
\begin{equation}\label{cal_X}
\begin{split}
 \mathcal{X}_{m,i,j,\varepsilon,\varepsilon_{1},\ell} 
 & : = \Big\{  (x_{1},x_{2}, \eta_{m} (x_{1},x_{2}) ) +    \tau [ \cos \theta \hat{x}_{1,m,i,j,\varepsilon} + \sin\theta \hat{x}_{2,m,i,j,\varepsilon}] + s n_{m,i,j,\varepsilon} \in \mathbb{R}^{3}: \\  
 & \ \ \ \ \ \ \ \ \ \ \ \ \  \ \ \ \ \ \ \ \ \     (x_{1},x_{2}) \in \mathcal{R}_{m,i,j,\varepsilon, \varepsilon_{1}}, \ \theta \in \big(\varepsilon \ell-\varepsilon_{1},  \varepsilon \ell+\varepsilon_{1} \big) , \ s\in (-\varepsilon, \varepsilon) \\
& \ \ \ \ \ \ \ \ \ \ \ \ \ \ \ \ \ \ \ \ \ \     \tau \in {\big[0, t_{\mathbf{f}} \big((x_{1},x_{2}, \eta_{m} (x_{1},x_{2}) ),   \cos \theta \hat{x }_{1,m,i,j,\varepsilon} + \sin\theta \hat{x}_{2,m,i,j,\varepsilon} \big) \big]}   \Big\}.
\end{split}
\end{equation}
We note that $\mathcal{O}_{m,i,j,\varepsilon,\varepsilon_{1},\ell}$ is an infinite union of open sets and hence is an open set. 

Finally we define
\begin{equation}\label{O_ep}
 \mathcal{O}_{\varepsilon,\varepsilon_{1}}  :=  \bigcup_{m,i,j,\ell} \mathcal{O}_{m,i,j,\varepsilon,\varepsilon_{1} ,\ell}  \ \cup \ \big[\mathbb{R}^{3} \times  B_{\mathbb{R}^{3}}(0;\varepsilon_{1}) \big],
\end{equation}
where $1\leq m \leq  M_{\Omega, \delta} = O(\frac{1}{\delta^{2}}), \ -N_{\varepsilon} \leq i,j \leq  N_{\varepsilon}= O(\frac{\delta}{\varepsilon}), \ 0 \leq \ell \leq L_{\varepsilon}= O(\frac{1}{\varepsilon}).$ Since $\mathcal{O}_{\varepsilon, \varepsilon_{1}}$ is a union of open sets, it is an open set.

\vspace{8pt}

\noindent \textit{Proof of (\ref{SinO}):} Suppose there exists $(x,v) \in \mathfrak{S}_{\mathrm{B}}$. By the definition {of $\mathfrak{S}_{\mathrm{B}}$ in (\ref{singular_set})} there exists 
$y= x_{\mathbf{b}}(x,v) \in \partial\Omega,$ such that $x= y + t_{\mathbf{f}}(y,v)v$ and $v\cdot n(y)=0.$ Then $y\in\mathcal{U}_{m}$ for some $m$. Without {loss of generality} (up to rotations and translations) we may assume that $y=(y_{1}, y_{2} ,\eta_{m}(y_{1},y_{2}))$ and $(y_{1}, y_{2}) \in \mathcal{R}_{m,i,j,\varepsilon,\varepsilon_{1}}$ for some $i,j.$ 

{First} we consider the case of $|v|\leq 1$. Then from (\ref{eta_C2})
\begin{eqnarray*}
|v\cdot n_{m,i,j,\varepsilon}| &\leq& |v\cdot n(y)| + |v\cdot (n(y) - n_{m,i,j,\varepsilon})|\\
&\leq& 4\varepsilon_{1} || \eta ||_{C^{2} (\mathcal{R}_{m,i,j,\varepsilon, \varepsilon_{1}})} \ 
 \leq  \ 4 \varepsilon_{1} || \eta ||_{C^{2} (\mathcal{A}_{m})}\\
&\leq& 8 C_{\eta} \varepsilon_{1}.
\end{eqnarray*}
By the statement of (\ref{in_Theta}), $v \in \bigcup_{\ell=0}^{L_{\varepsilon}} \Theta_{m,i,j,\varepsilon,\ell}$ and hence $(x,v) \in \mathcal{O}_{m,i,j,\varepsilon, \varepsilon_{1},\ell}\subset \mathcal{O}_{\varepsilon,\varepsilon_{1}}.$   

Secondly we consider the case of $|v|\geq 1$. Then we check that 
\begin{equation}\notag
\begin{split}
\big|n_{m,i,j,\varepsilon} \cdot \frac{v}{|v|} \big| \ = \ &\Big| n(y_{1},y_{2}, \eta_{m}(y_{1}, y_{2})) \cdot \frac{v}{|v|} +  [n_{m,i,j,\varepsilon} -n(y_{1},y_{2}, \eta_{m}(y_{1}, y_{2}))  ]\cdot \frac{v}{|v|} \Big|\\
 \ \leq  \ & 
 \Big| n(y_{1},y_{2}, \eta_{m}(y_{1}, y_{2})) \cdot \frac{v}{|v|} \Big|+ \Big| [n_{m,i,j,\varepsilon} -n(y_{1},y_{2}, \eta_{m}(y_{1}, y_{2}))  ]\cdot \frac{v}{|v|} \Big|
 \\
 \ = \ & 0+ \big|n(c_{m,i,j,\varepsilon},\eta_{m}(c_{m,i,j,\varepsilon}) )  -n(y_{1},y_{2}, \eta_{m}(y_{1}, y_{2}))  \big|\\
\ \leq \ & 
\frac{\big| \sqrt{1+ |\nabla \eta (y_{1},y_{2})|^{2}}- \sqrt{1+ |\nabla \eta (c_{m,i,j,\varepsilon})|^{2}}  \big|}{\sqrt{1+ |\nabla \eta ( c_{m,i,j,\varepsilon})|^{2}}}
+ \frac{|\nabla \eta(c_{m,i,j,\varepsilon}) - \nabla\eta (y_{1},y_{2})|}{\sqrt{1+ |\nabla \eta ( c_{m,i,j,\varepsilon})|^{2}}}.
\end{split}
\end{equation}
Due to (\ref{choice_delta}), we deduce $\frac{1}{\sqrt{1+ |\nabla \eta ( c_{m,i,j,\varepsilon})|^{2}}} \leq  \sqrt{\frac{64}{65}}$. Use (\ref{eta_C2}), we have 
\begin{eqnarray*}
|\nabla \eta(c_{m,i,j,\varepsilon}) - \nabla\eta (y_{1},y_{2})| &\leq& 4 \varepsilon_{1} \times || \eta ||_{C^{2}(\mathcal{R}_{m,i,j,\varepsilon, \varepsilon_{1}})}\\
 & \leq&    4 \varepsilon_{1} \times 
|| \eta ||_{C^{2}( \mathcal{A}_{m}
)}\\
&\leq& 4 C_{\eta} \varepsilon_{1}.
\end{eqnarray*}
Therefore we conclude
\[
\big| n_{m,i,j,\varepsilon} \cdot \frac{v}{|v|}\big| \ \leq \  8 C_{\eta} \varepsilon_{1}   .
\]
By (\ref{in_Theta}), $v \in \bigcup_{\ell=0}^{L_{\varepsilon}} \Theta_{m,i,j,\varepsilon,\ell}$ and hence $(x,v) \in   \mathcal{O}_{\varepsilon,\varepsilon_{1}}.$

\vspace{8pt}

\noindent \textit{Proof of (\ref{barO_O}):} It suffices to show that there exists a constant $C_{*}\gg1$ such that if $(x,v) \in \overline{\mathcal{O}_{\varepsilon, \varepsilon_{1}}} $ then $(x,v) \in\mathcal{O}_{\varepsilon, C_{*}  \varepsilon_{1} }$. 

{Since in the definition \eqref{O_ep} the union on $m,i,j,\ell$ is finite, we have}
\begin{equation}\notag
\begin{split}
\overline{\mathcal{O}_{\varepsilon, \varepsilon_{1}}} =& \bigcup_{m,i,j,\ell} \overline{\mathcal{O}_{m,i,j,\varepsilon,\varepsilon_{1},\ell}}  \ \cup \ 
\big[ \mathbb{R}^{3} \times \big\{ v\in \mathbb{R}^{3} :  |v|\leq\varepsilon_{1}  \big\} \big] \\
=& \bigcup_{m,i,j,\ell} \bigg[ \underbrace{\overline{\Big( \bigcup_{x \in \mathcal{X}_{m,i,j,\varepsilon,\varepsilon_{1} ,\ell  }  } B_{\mathbb{R}^{3}} (x;\varepsilon_{1}) \Big)  }} \ \times \  \overline{\Theta_{m,i,j,\varepsilon,\varepsilon_{1},\ell}}\bigg] \ \ \cup \ \  \big[ \mathbb{R}^{3} \times \big\{ v\in \mathbb{R}^{3} :  |v|\leq\varepsilon_{1}  \big\} \big] .
\end{split}
\end{equation}
First we define an open set including the underbraced set (a close set). For $0<  \varsigma,$ we define
\begin{equation}\label{delta_neighbor}
\begin{split}
&\bigcup_{x\in \mathcal{X}_{m,i,j,\varepsilon,\varepsilon_{1},\ell}   } \bigcup_{y \in B_{\mathbb{R}^{3}}(x;\varepsilon_{1}) } B_{\mathbb{R}^{3}}(y;\varsigma)\\
&=\Big\{ z\in \mathbb{R}^{3} : \text{there exists } x\in \mathcal{X}_{m,i,j,\varepsilon,\varepsilon_{1},\ell}  \ \text{and} \  y \in B_{\mathbb{R}^{3}}(x;\varepsilon_{1}) 
 \ \text{such that} \ 
 z \in B_{\mathbb{R}^{3}}(y; \varsigma) \Big\}.
\end{split}
\end{equation}
Since it is an infinite union of open balls, (\ref{delta_neighbor}) is open and the underbraced set is contained in (\ref{delta_neighbor}) for any $\varsigma>0.$ 

Now we claim that, there exists $C_{*}= C_{*}(\Omega ) \gg 1$ such that  such that for $0< \varepsilon \leq \varepsilon_{1} \ll 1$, there
exists $0 < \varsigma = \varsigma(\varepsilon_{1},C^{*}) \ll 1$ such that
\begin{equation}\label{delta_incls}
\bigcup_{x\in \mathcal{X}_{m,i,j,\varepsilon, \varepsilon_{1},\ell   }} \bigcup_{y \in B_{\mathbb{R}^{3}}(x;\varepsilon_{1}) } B_{\mathbb{R}^{3}}(y;\varsigma) 
\ \subset \  \bigcup_{x \in \mathcal{X}_{m,i,j,\varepsilon,C_{*}\varepsilon_{1}}} B_{\mathbb{R}^{3}} (x;C_{*}\varepsilon_{1}).
\end{equation} 
Choose $z \in \bigcup_{x\in \mathcal{X}_{m,i,j,\varepsilon,\varepsilon_{1},\ell}  } \bigcup_{y \in B_{\mathbb{R}^{3}}(x;\varepsilon_{1}) } B_{\mathbb{R}^{3}}(y;\varsigma).$ From (\ref{delta_neighbor}) there exist $x\in \mathcal{X}_{m,i,j,\varepsilon,\varepsilon_{1},\ell}  $ and $y \in B_{\mathbb{R}^{3}}(x;\varepsilon_{1})$ such that $z \in B_{\mathbb{R}^{3}}(y;\varsigma).$ If we choose $\varsigma < \e_{1}$ then $|x-z|\leq |x-y| + |y-z| \leq 2\varepsilon_{1}  < C_{*}  \varepsilon_{1}$ and therefore $z\in B_{\mathbb{R}^{3}}(x;C_{*}\varepsilon_{1})$. Clearly $x\in \mathcal{X}_{m,i,j,\varepsilon ,C_{*} \e_{1}}$. This proves our claim (\ref{delta_incls}).

On the other hand, from $(\ref{Theta}), \ C_{*} \gg1 $ and the fact that the vectors $\hat{x}_{1,m,i,j,\varepsilon}$, $\hat{x} _{2,m,i,j,\varepsilon},$ and $n_{m,i,j,\varepsilon}$ are fixed for given $m,i,j$,
\begin{equation}\label{theta_incls}
\begin{split}
\overline{ {\Theta} _{m, i,j,\varepsilon,\varepsilon_{1} ,\ell} } & \ = \ \Big\{ v=  r_{v} \cos \theta_{v} \hat{x}_{1,m, i,j,\varepsilon} +  r_{v} \sin\theta_{v}  \hat{x}_{2,m, ij,\varepsilon} +r_{v} \sin \phi_{v}  n_{m, i,j,\varepsilon} \in \mathbb{R}^{3}  : \\
 & \ \ \ \ \ \    \ \  \ \ \ \ \ \ 
 |r_{v} \sin\phi_{v}| \leq 8 C_{\eta} \varepsilon_{1} \     \text{for} \ r_{v} \in [0,1],   \ \  |  \sin\phi_{v}| \leq 8 C_{\eta} \varepsilon_{1}      \ \text{for} \ r_{v} \in [1,\infty),     \\ 
  & \ \ \ \ \ \    \ \  \ \ \ \ \ \  |\theta_{v}- \varepsilon \ell| \leq \varepsilon_{1}      \ \text{for} \ r_{v} \in [0,\infty)  \Big\}\\
& \ \subset \  
\Big\{ v=  r_{v} \cos \theta_{v} \hat{x}_{1,m, i,j,\varepsilon} +  r_{v} \sin\theta_{v}  \hat{x}_{2,m, ij,\varepsilon} +r_{v} \sin \phi_{v}  n_{m, i,j,\varepsilon} \in \mathbb{R}^{3}  : \\
 & \ \ \ \ \ \    \ \  \ \ \ \ \ \ 
 |r_{v} \sin\phi_{v}| < 8 C_{\eta} C_{*}\varepsilon_{1} \     \text{for} \ r_{v} \in [0,1],   \ \  |  \sin\phi_{v}| < 8 C_{\eta} C_{*} \varepsilon_{1}      \ \text{for} \ r_{v} \in [1,\infty),     \\ 
  & \ \ \ \ \ \    \ \  \ \ \ \ \ \  |\theta_{v}- \varepsilon \ell| < C_{*} \varepsilon_{1}      \ \text{for} \ r_{v} \in [0,\infty)  \Big\}  \\  
&  \ =  \ \Theta_{m,i,j, \varepsilon,C_{*}\varepsilon_{1},\ell}. 
 \end{split}
\end{equation}

Finally we conclude, from (\ref{delta_incls}) and (\ref{theta_incls}),
\begin{equation*}
\begin{split}
\overline{\mathcal{O}_{\varepsilon,\varepsilon_{1}}}  \ &\subset \  
\bigcup_{m,i,j,\ell} \Big[ \bigcup_{x \in \mathcal{X}_{m,i,j,\varepsilon,C_{*}\varepsilon_{1}}}  B_{\mathbb{R}^{3}} (x;C_{*}\varepsilon_{1}) \times \Theta_{m,i,j, \varepsilon,C_{*}\varepsilon_{1},\ell}\Big]
\ \cup  \  \big[  \mathbb{R}^{3} \times B_{\mathbb{R}^{3}}(0;C_{*}\varepsilon_{1})\big] \ \\
& =  \ \mathcal{O}_{\varepsilon,C_{*}\varepsilon_{1}}.
\end{split}
\end{equation*}

\vspace{4pt}

\noindent \textit{Proof of (\ref{small_bulk}):} From (\ref{O_ep}), we deduce 
\begin{eqnarray*} 
&&\iint_{\Omega\times\mathbb{R}^{3}} \mathbf{1}_{\mathcal{O}_{\varepsilon,C_{*}\varepsilon}} (x,v) e^{-\theta |v|^{2}} \mathrm{d}v \mathrm{d}x\\
&\leq& \sum_{m,i,j,\ell}  \iint_{\Omega\times\mathbb{R}^{3}} \mathbf{1}_{\mathcal{O}_{m,i,j,\varepsilon, C_{*}\varepsilon,\ell}} (x,v) e^{-\theta |v|^{2}} \mathrm{d}v \mathrm{d}x +  \mathrm{m}_{3}(\Omega) 
O(|\varepsilon|^{3})  \\
&\leq& M_{\Omega, \delta}   (2N_{\varepsilon})^{2}   L_{\varepsilon} \times
\sup_{m,i,j ,\ell}\iint_{\Omega\times\mathbb{R}^{3}} \mathbf{1}_{\mathcal{O}_{m,i,j,\varepsilon, C_{*}\varepsilon,\ell}} (x,v) e^{-\theta |v|^{2}} \mathrm{d}v \mathrm{d}x +  \mathrm{m}_{3}(\Omega) 
O(|\varepsilon|^{3})\\
& \ \lesssim_{\Omega}&  O(\frac{1}{\varepsilon^{3}})\times
\sup_{m,i,j,\ell}\iint_{\Omega\times\mathbb{R}^{3}} \mathbf{1}_{\mathcal{O}_{m,i,j,\varepsilon,C_{*}\varepsilon,\ell}} (x,v) e^{-\theta |v|^{2}} \mathrm{d}v \mathrm{d}x + 
O(|\varepsilon|^{3}) .
\end{eqnarray*}
Therefore, to prove (\ref{small_bulk}), it suffices to show 
\begin{equation}\label{ep_4}
\sup_{m,i,j,\ell}\iint_{\Omega\times\mathbb{R}^{3}} \mathbf{1}_{\mathcal{O}_{m,i,j,\varepsilon,C_{*}\varepsilon,\ell}} (x,v) e^{-\theta |v|^{2}} \mathrm{d}v \mathrm{d}x \ \lesssim_{\delta,\Omega} \   \varepsilon ^{4}.
\end{equation}
From (\ref{Theta}),
\begin{eqnarray*} 
&&\int_{\mathbb{R}^{3}} \mathbf{1}_{\Theta_{m,i,j,\varepsilon, C_{*}\varepsilon ,\ell}} (v) e^{- \theta |v|^{2}} \mathrm{d}v\\
&=& \int_{|v| \leq 1} + \int_{|v|\geq 1}\\
&=& \int_{|r_{v} \sin\phi_{v}| \leq 8 C_{\eta} C_{*}\varepsilon} \mathrm{d}|r_{v} \sin\phi_{v}|
\int_{0}^{\infty} |r_{v} \cos\phi_{v}| e^{-\theta | r_{v} \cos\phi_{v}|^{2}} \mathrm{d} | r_{v} \cos\phi_{v}|
\int_{|\theta_{v}- \varepsilon \ell| < C_{*}\varepsilon} \mathrm{d} \theta_{v}\\
& &+\int_{1}^{\infty} |r_{v}|^{2}e^{-\theta |r_{v}|^{2}} \mathrm{d}r_{v}
\int_{|\sin \phi_{v}| < 8 C_{\eta} C_{*}\varepsilon} \mathrm{d}\phi_{v}
\int_{|\theta_{v}- \varepsilon \ell| < C_{*}\varepsilon} \mathrm{d}\theta_{v} \\
&\lesssim_{\Omega}&  \varepsilon ^{2}. 
\end{eqnarray*}
Now we claim that, for $\varepsilon_1\ge\varepsilon$,
\begin{equation}\label{v_e12}
\mathrm{m}_{3} \Big( \bigcup_{x\in\mathcal{X}_{m,i,j,\varepsilon,\varepsilon_{1},\ell}  }   B_{\mathbb{R}^{3}} (x;\varepsilon_{1}) \Big) \ \lesssim_{\Omega}  \  \varepsilon_{1} ^{2}.
\end{equation}
{Without loss of generality we assume that $i=j=0$ and $l=0$. Therefore $c_{m,i,j,\varepsilon}=0$ in (\ref{lattice}) and}
\begin{equation}\notag
\begin{split}
 & \mathcal{X}_{m,i,j,\varepsilon,\varepsilon_{1},\ell}   \subset \Big\{  (x_{1},x_{2}, \eta (x_{1},x_{2}) ) +    \tau [ \cos \theta\mathbf{e}_{1} + \sin\theta \mathbf{e}_{2}   ] + s  \mathbf{e}_{3}\in \mathbb{R}^{3}: \\  
 & \ \ \ \ \ \ \ \ \ \ \ \ \  \ \ \ \ \ \ \ \ \  \ \ \  (x_{1},x_{2}) \in (-\varepsilon_{1}, \varepsilon_{1})^{2}, \ \theta \in (-\varepsilon_{1} ,  \varepsilon_{1}  ), \\
& \ \ \ \ \ \ \ \ \ \ \ \ \ \ \ \ \ \ \ \ \ \   \ \ \  \tau \in {\big[0, t_{\mathbf{f}} ((x_{1},x_{2}, \eta (x_{1},x_{2}) ),   \cos \theta  \mathbf{e}_{1} + \sin\theta  \mathbf{e}_{2} ) \big]} , \ s\in (-\varepsilon_{1}, \varepsilon_{1})  \Big\}.
\end{split}
\end{equation}
Since $\Omega$ is bounded, we have that $\mathrm{diam}({\Omega}) = \sup_{x,y \in \Omega}|x-y| < + \infty$ and hence
\[
t_{\mathbf{f}} ((x_{1},x_{2}, \eta (x_{1},x_{2}) ),   \cos \theta  \mathbf{e}_{1} + \sin\theta  \mathbf{e}_{2} ) \leq \mathrm{diam}(\Omega).
\]
We have
\begin{equation*}
\begin{split}
 \bigcup_{x\in\mathcal{X}_{m,i,j,\varepsilon,\varepsilon_{1},\ell}  }   B_{\mathbb{R}^{3}} (x;\varepsilon_{1}) & \subset 
 \bigcup_{\tau=0}^{2 \mathrm{diam}({\Omega})} B_{\mathbb{R}^{3}} (\tau \mathbf{e}_{1}; [10+ || \eta ||_{C^{1}(\mathcal{A}_{m})} + \tau || \eta ||_{C^{2}(\mathcal{A}_{m})} ] \varepsilon_{1} ).
\end{split}
\end{equation*}
More precisely $\bigcup_{x\in\mathcal{X}_{m,i,j,\varepsilon,\varepsilon_{1},\ell}  }   B_{\mathbb{R}^{3}} (x;\varepsilon_{1})$ is included in the truncated cone with height $\mathrm{diam}(\Omega)$, top radius $[10+ || \eta ||_{C^{1}(\mathcal{A}_{m})}]\varepsilon_{1}$, and the bottom radius $[10+ || \eta ||_{C^{1}(\mathcal{A}_{m})} + \mathrm{diam}(\Omega) || \eta ||_{C^{2}(\mathcal{A}_{m})}]\varepsilon_{1}$.

Therefore, using (\ref{choice_delta}) and (\ref{eta_C2}), we conclude (\ref{v_e12})
\begin{eqnarray*}
\mathrm{m}_{3}\Big( \bigcup_{x\in\mathcal{X}_{m,i,j,\varepsilon,\varepsilon_{1},\ell}  }   B_{\mathbb{R}^{3}} (x;\varepsilon_{1}) \Big)&  \leq  &
\mathrm{m}_{3} \bigg(
\bigcup_{\tau=0}^{2 \mathrm{diam}({\Omega})} B_{\mathbb{R}^{3}} (\tau \mathbf{e}_{1}; [10+ || \eta ||_{C^{1} (\mathcal{A}_{m})} + \tau || \eta ||_{C^{2} (\mathcal{A}_{m})} ] \varepsilon_{1} )
\bigg)\\
 & \leq &  { 3\, \mathrm{diam}(\Omega)\,}
 \Big[
 10 + || \eta ||_{C^{1}(\mathcal{A}_{m})} + \mathrm{diam} (\Omega) || \eta ||_{C^{2}(\mathcal{A}_{m})}
 \Big]^{2} \times (\varepsilon_{1})^{2}\\
 &\leq& { 3\,  \mathrm{diam}(\Omega)\,}\big[10 + \frac{1}{8} + C_{\eta} \mathrm{diam}(\Omega) \big]^{2} (\varepsilon_{1})^{2}\\
 &\lesssim_{\Omega}&  \varepsilon_{1} ^{2}.
\end{eqnarray*}
Finally {selecting $\varepsilon_1=C_* \varepsilon$ in \eqref{v_e12}} we conclude (\ref{ep_4}) as
\begin{equation*}\label{e2v}
\begin{split}
& \mathrm{m}_{3} \Big( \bigcup_{x\in\mathcal{X}_{m,i,j,\varepsilon,  C_* \varepsilon,\ell}}   B_{\mathbb{R}^{3}} (x;C_* \varepsilon) \Big) \times  \int_{\mathbb{R}^{3}} \mathbf{1}_{\Theta_{m,i,j,\varepsilon,C_* \varepsilon,\ell}}(v)  e^{-\theta |v|^{2}} \mathrm{d}v   \\
 &\ \lesssim   \ \mathrm{m}_{3} \Big( \bigcup_{x\in\mathcal{X}_{m,i,j,\varepsilon, C_* \varepsilon,\ell}}   B_{\mathbb{R}^{3}} (x;\varepsilon_{1}) \Big) \times  (\varepsilon)^{2}\\
 & \ \lesssim \   \varepsilon ^{4}.
\end{split}
\end{equation*} 

 \vspace{8pt}

\noindent \textit{Proof of (\ref{dist_ep}):} Due to (\ref{SinO}), it suffices to show that there exists $C_{2}  = C_{2} (C_{*})>0$ such that
\begin{equation}\label{dist_ep_cover}
\mathrm{dist} \big( \bar{\Omega} \times \mathbb{R}^{3} \backslash \mathcal{O}_{\varepsilon, C_{*}\varepsilon }   , \ \overline{ \mathcal{O}_{\varepsilon,\varepsilon}  } \big) > C_{2} \varepsilon.
\end{equation}
By the definition of $\mathcal{O}_{\varepsilon,\varepsilon}$ in (\ref{O_ep}),
\begin{eqnarray}\notag 
&&\mathrm{dist} ( \bar{\Omega} \times \mathbb{R}^{3} \backslash \mathcal{O}_{\varepsilon,C_{*}\varepsilon}, \overline{ \mathcal{O}_{\varepsilon,\varepsilon}  } ) \nonumber\\
 & =& \inf \big\{ 
 |(x,v)-(y,u)| : (x,v) \in  ( \mathcal{O}_{\varepsilon,C_{*}\varepsilon})^{c}, \ (y,u) \in \overline{{O}_{\varepsilon,\varepsilon}}
\big\} \nonumber\\
&=& \inf_{m,i,j,\ell} \inf \big\{ |(x,v)-(y,u)| : (x,v) \in ( \mathcal{O}_{\varepsilon,C_{*}\varepsilon})^{c}, \ (y,u)  \in \overline{\mathcal{O}_{m,i,j,\varepsilon,\varepsilon,\ell}} \cup [\mathbb{R}^{3} \times \overline{  B_{\mathbb{R}^{3}} (0;\varepsilon)  }   ]
\big\} \nonumber\\
&\geq  & \inf_{m,i,j,\ell} \inf\big\{ |(x,v)-(y,u)| : (x,v) \in (\mathcal{O}_{m,i,j,\varepsilon,C_{*}\varepsilon, \ell})^{c} \cap [ \mathbb{R}^{3} \times B_{\mathbb{R}^{3}} (0,C_{*}\varepsilon)^{c}], \nonumber\\
&& \ \ \ \ \ \ \ \ \ \ \ \ \ \  \ \ \ \ \ \ \ \ \ \ \ \ \   \ \ \ \ \ \ \ \    (y,u) \in \overline{\mathcal{O} _{ m,i,j,\varepsilon,\varepsilon,\ell}} \cup [ \mathbb{R}^{3} \times B_{\mathbb{R}^{3}}(0;\varepsilon)]
\big\} \nonumber \\
&=& \inf_{m,i,j,\ell}\min\Big\{ \inf \big\{
|(x,v)-(y,u)| : (x,v) \in (\mathcal{O}_{m,i,j,\varepsilon,C_{*} \varepsilon, \ell})^{c} \cap [\mathbb{R}^{3} \times B_{\mathbb{R}^{3}}(0;C_{*} \varepsilon)^{c}] , \notag \\ 
&&  \ \ \ \ \ \ \ \ \ \ \ \ \ \ \ \ \ \ \ \ \ \ \ \ \ \ \ \ \ \ \ \ \ \ \ \ \ \ \ \ \ \ \ \ (y,u) \in \mathbb{R}^{3} \times B_{\mathbb{R}^{3}} (0;\varepsilon)  
\big\}, \label{dist_O1}\\
&&  \ \ \ \ \ \ \ \ \ \   \ \ \ \ \ \ 
 \inf \big\{ |(x,v)-(y,u)| :   (x,v) \in (\mathcal{O}_{m,i,j,\varepsilon,C_{*}\varepsilon, \ell})^{c}  \ \cap \ [ \mathbb{R}^{3} \times B_{\mathbb{R}^{3}} (0,C_{*}\varepsilon)^{c}], \nonumber \\ 
 &&   \ \ \ \ \ \ \ \ \ \ \ \ \ \ \ \ \ \ \ \ \ \ \ \ \ \ \ \ \ \ \ \ \ \ \ \ \ \ \ \ \ \ \ \  (y,u) \in \overline{\mathcal{O} _{ m,i,j,\varepsilon,\varepsilon,\ell}} \ \cap \ [ \mathbb{R}^{3} \times B_{\mathbb{R}^{3}} (0, \varepsilon)^{c}]   \big\}  \ 
\Big\}. \label{dist_O2}
\end{eqnarray}
Clearly, 
\begin{eqnarray*}
(\ref{dist_O1}) &\geq & \inf \big\{
|(x,v)-(y,u)| : (x,v) \in  \mathbb{R}^{3} \times B_{\mathbb{R}^{3}}(0;C_{*} \varepsilon)^{c}  , \ (y,u) \in \mathbb{R}^{3} \times B_{\mathbb{R}^{3}} (0;\varepsilon)  
\big\} \\
&\geq & \inf\big\{ |v-u| : v \in B_{\mathbb{R}^{3}}(0; C_{*} \varepsilon)^{c}, \  
u \in B_{\mathbb{R}^{3}}(0;\varepsilon)
  \big\}\\
&=& (C_{*}-1) \varepsilon.
\end{eqnarray*}

Now we claim that (\ref{dist_O2}) is bounded below by the minimum of (\ref{dist_O2a}) and (\ref{dist_O2b}):
\begin{eqnarray}
&&(\ref{dist_O2})\nonumber \\
&\geq&  \min  \bigg(  \inf \Big\{
|(x,v)-(y,u)| : (x,v) \in \bigcup_{x\in \mathcal{X}_{m,i,j, \varepsilon, C_{*}\varepsilon},\ell} B_{\mathbb{R}^{3}} (x,C_{*}\varepsilon) \times\big[ \big( \Theta_{m,i,j,\varepsilon,C_{*}\varepsilon, \ell}  \big)^{c} \backslash B_{\mathbb{R}^{3}}(0;C_{*}\varepsilon)\big], \nonumber \\
&& \ \ \ \ \ \ \ \ \  \ \ \ \ \ \ \ \ \ \ \ \ \ \ \ \ \ \ \   (y,u)  \in \Big[\bigcup_{x\in \mathcal{X}_{m,i,j,\varepsilon,\frac{C_{*}}{2} \varepsilon,\ell}}
 {B_{\mathbb{R}^{3}}(x;  \frac{C_{*}}{2}\varepsilon )   } \Big] \ \times  \ 
\big[ \ \overline{\Theta_{m,i,j,\varepsilon,\varepsilon,\ell}} \backslash B_{\mathbb{R}^{3}}(0; \varepsilon)\big]
\Big\}, \ \ \ \ \ \ \label{dist_O2a}  \\
&&  \inf \Big\{     
|(x,v)-(y,u)| :   (x,v) \in \big[ \bigcap_{ x \in\mathcal{X}_{m,i,j,\varepsilon,C_{*}\varepsilon,\ell}} \big(B_{\mathbb{R}^{3}} (x; C_{*} \varepsilon) \big)^{c}\Big] \times \big[  \mathbb{R}^{3} \backslash B_{\mathbb{R}^{3}}(0;C_{*}\varepsilon)   \big]
 , \notag\\
&& \ \ \ \ \ \ \ \ \ \ \ \ \ \ \ \ \ \ \  \ \ \ \ \ \ \ \ \ (y,u)  \in \Big[\bigcup_{x\in \mathcal{X}_{m,i,j,\varepsilon,\frac{C_{*}}{2} \varepsilon,\ell  }  }
 {B_{\mathbb{R}^{3}}(x;  \frac{C_{*}}{2}\varepsilon )   } \Big] \ \times  \ 
\big[ \ \overline{\Theta_{m,i,j,\varepsilon,\varepsilon,\ell}}\backslash B_{\mathbb{R}^{3}}(0; \varepsilon)\big]
\Big\}
 \bigg). \ \ \ \ \ \ \label{dist_O2b}
\end{eqnarray}

 Firstly, we divide $\{x \in ( \mathcal{O}_{m,i,j,\varepsilon,C_{*}\varepsilon,\ell})^{c}\}$ in (\ref{dist_O2}) into two parts: from the definition of $\mathcal{O}_{m,i,j,\varepsilon,C_{*}\varepsilon,\ell}$ in (\ref{O_mijl}), we deduce that 
\begin{equation*}
\begin{split}
 (\mathcal{O}_{m,i,j,\varepsilon,C_{*} \varepsilon, \ell})^{c} 
 \ \  =  &  \  \ 
\Big[ \bigcup_{ x  \in \mathcal{X}_{m,i,j,\varepsilon,C_{*}\varepsilon ,\ell  }} B_{\mathbb{R}^{3}} (x; C_{*} \varepsilon)  \Big] \times \big( \Theta_{m,i,j,\varepsilon,C_{*}\varepsilon, \ell}\big)^{c}\\
&\cup  \   \Big[ \bigcap_{ x \in\mathcal{X}_{m,i,j,\varepsilon,C_{*}\varepsilon,\ell}} \big(B_{\mathbb{R}^{3}} (x; C_{*} \varepsilon) \big)^{c}\Big] \times 
\mathbb{R}^{3}
.
\end{split}
\end{equation*}
Therefore, $(\ref{dist_O2})$ is bounded below by the minimum of following two numbers:
\begin{equation}\label{decom_{O}}
\begin{split}
& \inf \big\{ |(x,v)-(y,u)| :   (x,v) \in  \Big[ \bigcup_{ x  \in \mathcal{X}_{m,i,j,\varepsilon,C_{*}\varepsilon,\ell}} B_{\mathbb{R}^{3}} (x; C_{*} \varepsilon)  \Big] \times [\big( \Theta_{m,i,j,\varepsilon,C_{*}\varepsilon, \ell}\big)^{c} \backslash B_{\mathbb{R}^{3}} (0,C_{*}\varepsilon)^{c}],   \\ 
 &    \ \ \ \ \ \ \ \ \ \ \ \   \ \ \ \ \ \ \ \ \ \ \ \ \ \ \ \  (y,u) \in \overline{\mathcal{O} _{ m,i,j,\varepsilon,\varepsilon,\ell}} \ \cap \ [ \mathbb{R}^{3} \times B_{\mathbb{R}^{3}} (0, \varepsilon)^{c}]   \big\}  \ 
\Big\},\\
& \inf \big\{ |(x,v)-(y,u)| :   (x,v) \in  \Big[ \bigcap_{ x \in\mathcal{X}_{m,i,j,\varepsilon,C_{*}\varepsilon,\ell}} \big(B_{\mathbb{R}^{3}} (x; C_{*} \varepsilon) \big)^{c}\Big] \times 
[  \mathbb{R}^{3} \backslash B_{\mathbb{R}^{3}} (0,C_{*}\varepsilon) ]  ,   \\ 
 &    \ \ \ \ \ \ \ \ \ \ \ \   \ \ \ \ \ \ \ \ \ \ \ \ \ \ \ \  (y,u) \in \overline{\mathcal{O} _{ m,i,j,\varepsilon,\varepsilon,\ell}} \ \cap \ [ \mathbb{R}^{3} \times B_{\mathbb{R}^{3}} (0, \varepsilon)^{c}]   \big\}  \ 
\Big\}.
\end{split}
\end{equation}

Secondly, we consider $\{y \in \overline{ \mathcal{O}_{m,i,j,\e, \e, \ell}}\}$. From (\ref{delta_incls}) with $\e_{1} =\e$, for some $\varsigma= \varsigma(\e, C_{*})>0$ 
\begin{equation}\notag
\overline{\bigcup_{x\in \mathcal{X} _{m,i,j,\varepsilon,\varepsilon,\ell}} B_{\mathbb{R}^{3}} (x;\varepsilon) }  \
\subset \ 
\bigcup_{x\in\mathcal{X}_{m,i,j,\varepsilon,\varepsilon,\ell}} \bigcup_{y \in B_{\mathbb{R}^{3}} (x;\varepsilon)} B_{\mathbb{R}^{3}} (y;\varsigma) \
\subset \ \bigcup_{x\in\mathcal{X}_{m,i,j,\varepsilon,\frac{C_{*} }{2} \varepsilon,\ell}} B_{\mathbb{R}^{3}} (x; \frac{C_{*}}{2}\varepsilon),
\end{equation}
and from the definition of $\mathcal{O}_{m,i,j,\varepsilon,\varepsilon,\ell}$ in (\ref{O_mijl}), we conclude
\begin{equation}\notag
\begin{split}
\overline{\mathcal{O}_{m,i,j,\varepsilon,\varepsilon,\ell}}   \ \ \ \   =  &  \ \ \ \overline{\bigcup_{x\in \mathcal{X} _{m,i,j,\varepsilon,\varepsilon,\ell}} B_{\mathbb{R}^{3}} (x;\varepsilon) } \ \times \  
\overline{\Theta_{m,i,j,\varepsilon,\varepsilon,\ell}} \\
  \subset &   \ \ \   \Big[\bigcup_{x\in \mathcal{X}_{m,i,j,\varepsilon,\frac{C_{*}}{2} \varepsilon,\ell}}
 {B_{\mathbb{R}^{3}}(x;  \frac{C_{*}}{2}\varepsilon )   } \Big] \ \times  \ 
\overline{\Theta_{m,i,j,\varepsilon,\varepsilon,\ell}}.
\end{split}
\end{equation}
Therefore, the first number of (\ref{decom_{O}}) is bounded below by (\ref{dist_O2a}) and the second of (\ref{decom_{O}}) by (\ref{dist_O2b}). This proves the claim.

Now we claim that
\[
(\ref{dist_O2a})\gtrsim   \varepsilon, \ \ \ \text{and} \ (\ref{dist_O2b})\gtrsim   \varepsilon.
\]

Firstly, we prove $(\ref{dist_O2a})  \gtrsim   \varepsilon.$ Let $v \in \big( \Theta_{m,i,j,\varepsilon,C_{*}\varepsilon, \ell}  \big)^{c}\backslash B_{\mathbb{R}^{3}}(0;C_{*}\varepsilon)$. By (\ref{Theta})
\begin{eqnarray*} 
v = r_{v} \cos\theta_{v} \cos \phi_{v}\hat{x}_{1,m,i,j,\varepsilon} +r_{v} \sin \theta_{v} \cos \phi_{v} \hat{x}_{2,m,i,j,\varepsilon} + r_{v} \sin\phi_{v} n_{m,i,j,\varepsilon},
\end{eqnarray*}
where 
\begin{eqnarray*}
&&|r_{v}\sin\phi_{v}|  \geq 8 C_{\eta} C_{*} \varepsilon \ \ \text{and}  \ \ |r_{v}| \leq 1,  \\
&\text{or}&  |\sin \phi_{v}| \geq  8  C_{\eta}   C_{*} \varepsilon \   \ \text{and} \  \ |r_{v}| \geq 1,\\
&\text{or}&|\theta_{v}- \varepsilon \ell |  \geq      C_{*} \varepsilon.
\end{eqnarray*}
Let $u \in \overline{\Theta_{m,i,j,\varepsilon,\varepsilon,\ell}} \backslash B_{\mathbb{R}^{3}}(0,  \varepsilon).$ Again from (\ref{Theta}) we have
\begin{eqnarray*} 
u  =  r_{u} \cos\theta_{u} 
\cos \phi_{u}
\hat{x}_{1,m,i,j,\varepsilon} + r_{u} \sin \theta_{u} \cos \phi_{u}\hat{x}_{2,m,i,j,\varepsilon} + r_{u} \sin\phi_{u} n_{m,i,j,\varepsilon},
\end{eqnarray*}
where
\begin{eqnarray*}
&&|\theta_{u}- \varepsilon \ell |  \leq   \varepsilon,\\
&\text{and}&| r_{u}\sin\phi_{u}|  \leq  8C_{\eta} \varepsilon    \ \  \text{for} \ |r_{u}| \leq 1,\\
&\text{and}&  |\sin \phi_{u}| \leq 8C_{\eta} \varepsilon   \ \ \text{for} \  |r_{u}| \geq 1.
\end{eqnarray*}
If $|\theta_{v}-\e \ell| \geq C_{*} \e$ then clearly $|v-u|\gtrsim \e$ since $|\theta_{u} -\e \ell| \leq \e$.

Now we consider the case of $|\theta_{v} -\e \ell| \leq C_{*} \e.$ 

If $|r_{v} |\leq  1$ and $|r_{u}| \leq 1$, then 
\begin{eqnarray*}
|v-u| &\geq& | (v-u)\cdot n_{m,i,j,\varepsilon}   | 
\geq  |  v \cdot n_{m,i,j,\varepsilon}   | - |  u \cdot n_{m,i,j,\varepsilon}   |
\\
& \geq& |r_{v} \sin \phi_{v}| - |r_{u} \sin\phi_{u}|
\geq 8 C_{\eta} C_{*} \varepsilon - 8 C_{\eta} \varepsilon
\\
& \gtrsim&   \varepsilon.
\end{eqnarray*}
If $|r_{v} | \geq 1$ and $|r_{u}| \leq 1$, then 
\begin{eqnarray*} 
|v-u| &\geq&  |(v-u)\cdot n_{m,i,j,\varepsilon}  | \geq 
| r_{v} \sin\phi_{v} - r_{u} \sin\phi_{u} | \geq | r_{v} \sin\phi_{v} | - |r_{u} \sin\phi_{u}|\\
&\geq& |\sin\phi_{v}| - 8 C_{\eta} \varepsilon
\\
&\geq  &  8 C_{\eta} C_{*} \varepsilon  -  8 C_{\eta}\varepsilon\\
& \gtrsim&  \varepsilon. 
\end{eqnarray*}
If $|r_{v} | \leq 1$ and $|r_{u}|\geq 1$, then $|r_{v} \sin\phi_{v}| \geq 8C_{\eta} C_{*}\varepsilon, \ |r_{v} \cos\phi_{v}| \leq |\cos\phi_{v}|$ and $|u_{n}| \leq \varepsilon + |u| \sin\varepsilon = \varepsilon +( |u_{n}|
+ |u_{\tau}| ) \sin\varepsilon,
$ and $$|u_{n}| \leq \frac{\varepsilon + |u_{\tau}| \sin\varepsilon}{1-\sin\varepsilon} \leq \frac{\varepsilon}{1-\sin\varepsilon}(1+|u_{\tau}|).$$ 

Fix $0< c_{*}\ll1\ll  C_{*}.$ If $C_{*} -c_{*} \geq |u|$, then
\begin{eqnarray*} 
|v-u| &  \geq & |(v-u)\cdot n_{m,i,j,\varepsilon}| \geq |v\cdot n_{m,i,j,\varepsilon} | -| u\cdot n_{m,i,j,\varepsilon} | \\
& \geq    & 
8 C_{\eta} C_{*} \varepsilon - |u| \times 8 C_{\eta} \varepsilon \geq 8C_{\eta} \varepsilon (C_{*} - |u|)
   \\ 
& \geq& 8 C_{\eta}  \varepsilon \times c_{*}.   
\end{eqnarray*}
On the other hand, if $C_{*}-c_{*} \leq |u|$, then 
\begin{eqnarray*} 
|v-u| & \geq &   \big| [u- (u\cdot n_{m,i,j,\varepsilon})n_{m,i,j,\varepsilon} ] -  [v- (v\cdot n_{m,i,j,\varepsilon})n_{m,i,j,\varepsilon} ] \big|\\
& \geq &
|u| |\cos\phi_{u}| - |v| |\cos\phi_{v}|\\
 & \geq & |u|  \sqrt{1-64 (C_{\eta})^{2}\varepsilon^{2} } - |\cos\phi_{v}|\\
 & \geq & (C_{*}-c_{*}) \sqrt{1-64 (C_{\eta})^{2} \varepsilon^{2}} -1\\
 & \gtrsim & 1. 
\end{eqnarray*}
For $|r_{v}|\geq 1$ and $|r_{u}|\geq 1,$ we have
\begin{eqnarray*}
|v-u|&\geq& \min \Big\{ |  \sin \phi_{v} -   \sin\phi_{u}|,  \\
&& \ \ \ \ \ \ \ \ 
|   (  \cos\theta_{v} \cos \phi_{v} -     \cos\theta_{u} \cos \phi_{u}   )
\hat{x}_{1,m,i,j,\varepsilon} + ( \sin \theta_{v} \cos \phi_{v} -
 \sin \theta_{u} \cos \phi_{u}
) \hat{x}_{2,m,i,j,\e}
|
\Big\}\\
&\gtrsim& C_{\eta}(C_{*}-1)  \varepsilon.
\end{eqnarray*} 
Combining all cases, we deduce $(\ref{dist_O2a})  \gtrsim   \varepsilon.$ 

Secondly, we prove $(\ref{dist_O2b})  \gtrsim  \varepsilon.$ The proof is due to
\begin{equation}\notag
\begin{split}
 (\ref{dist_O2b})  & \geq \inf \bigg\{  |x-y| : x \in \bigcap_{ z\in \mathcal{X}_{m,i,j,\varepsilon,C_{*}\varepsilon,\ell} } ( B_{\mathbb{R}^{3}}(z;C_{*}\varepsilon))^{c} , \ 
y \in \bigcup_{z\in \mathcal{X}_{m,i,j, \varepsilon,\frac{C_{*}}{2} \varepsilon,\ell}} B_{\mathbb{R}^{3}} (z; \frac{C_{*}}{2} \varepsilon)
   \bigg\}\\
   & \geq \inf \bigg\{  |x-y| : x \in \bigcap_{ z\in \mathcal{X}_{m,i,j,\varepsilon,\frac{C_{*}}{2}\varepsilon,\ell} } ( B_{\mathbb{R}^{3}}(z;C_{*}\varepsilon))^{c} , \ 
y \in \bigcup_{z\in \mathcal{X}_{m,i,j,\varepsilon, \frac{C_{*}}{2} \varepsilon,\ell  }} B_{\mathbb{R}^{3}} (z; \frac{C_{*}}{2} \varepsilon)
   \bigg\}\\
   & \geq \inf_{z\in  \mathcal{X}_{m,i,j ,\varepsilon, \frac{C_{*}}{2} \varepsilon ,\ell } }
\inf \Big\{  |x-y| : x\in ( B_{\mathbb{R}^{3}}(z;C_{*}\varepsilon))^{c} , \  y \in  B_{\mathbb{R}^{3}} (z; \frac{C_{*}}{2} \varepsilon)\Big\}\\
& \geq \frac{C_{*}}{2} \varepsilon.
\end{split}
\end{equation}

\end{proof}

Recall the standard mollifier $\varphi : \mathbb{R}^{3} \times \mathbb{R}^{3} \rightarrow [0,\infty)$,
\[
\varphi(x,v) : = C \exp \Big( \frac{1}{|x|^{2} + |v|^{2}-1}\Big), \  \text{for} \ \sqrt{|x|^{2} + |v|^{2}} < 1, \ \ \ \ \varphi(x,v) :=0 , \ \text{for} \ \sqrt{|x|^{2} + |v|^{2} }\geq 1,
\]
where the constant $C>0$ is selected so that $\int_{\mathbb{R}^{3} \times \mathbb{R}^{3}} \varphi (x,v)\mathrm{d}v\mathrm{d}x=1.$ 

For each $\varepsilon>0,$ set
\begin{equation}\label{mollifier}
\varphi_\varepsilon(x,v):= (\varepsilon/\tilde{C})^{-6}  \varphi(\frac{\sqrt{|x|^{2} + |v|^{2}}}{  \varepsilon/\tilde{C}}),
\end{equation}
where $\tilde{C}\gg C_{*}\gg 1.$ Clearly $\varphi_{\varepsilon}$ is smooth and bounded and satisfies
\[
\iint_{\mathbb{R}^{3} \times \mathbb{R}^{3}} \varphi_{\varepsilon} (x,v) \mathrm{d}v \mathrm{d}x =1, \ \ \ \mathrm{spt}( \varphi_{\varepsilon}) \subset B_{\mathbb{R}^{3} \times \mathbb{R}^{3}}(0; {\varepsilon}/{\tilde{C}}).
\]

\begin{definition}
We define a smooth cut-off function $\chi_\varepsilon: \bar{\Omega} \times \mathbb{R}^3\rightarrow [0,2]$ as
\begin{equation}\label{cutoff}
\begin{split}
\chi_\varepsilon(x,v) :=& \ \mathbf{1}_{ \bar{\Omega}\times\mathbb{R}^3 \backslash  \mathcal{O}_{\varepsilon, {C}_{*}\varepsilon} }  * \varphi_{\varepsilon} (x,v)\\
=& \ \iint_{\mathbb{R}^{3} \times \mathbb{R}^{3}}
\mathbf{1}_{\bar{\Omega} \times \mathbb{R}^{3} \backslash \mathcal{O}_{\varepsilon, C_{*} \varepsilon}}  (y,u) \varphi_{\varepsilon}(x-y, v-u) \mathrm{d}u \mathrm{d}y
.
\end{split}
\end{equation} 
\end{definition}

The following properties of the cut-off function are crucial for our analysis.

\begin{lemma}\label{cut_off}
There exists $\tilde{C}\gg C_{*} \gg 1$ in (\ref{mollifier}) and (\ref{cutoff}) and $\varepsilon_0 = \varepsilon_0(\Omega)>0$ such that if $0<\varepsilon< \varepsilon_0$ then
\begin{equation}
\mathfrak{S}_{\mathrm{B}} \  \ \subset \ \big\{(x,v) \in  \bar{\Omega} \times \mathbb{R}^3: \chi_\varepsilon(x,v)=0\big\}, \label{S_inc_zero}
\end{equation}
and, for either $\partial=\nabla_x$ or $\partial = \nabla_v$,
\begin{eqnarray} 
\iint_{ {\Omega}\times \mathbb{R}^{3}} [1-\chi_\varepsilon(x,v) ]e^{-\theta|v|^{2}} \mathrm{d}v \mathrm{d}x &   \lesssim_{\Omega } &\varepsilon, \label{chi_W11bound1}\\
 \iint_{\bar{\Omega}\times  \mathbb{R}^{3}} |\partial \chi_\varepsilon(x,v)| e^{-\theta|v|^{2}} \mathrm{d}v\mathrm{d}x &   \lesssim_{\Omega } & 1.\label{chi_W11bound2}
\end{eqnarray} 

\end{lemma}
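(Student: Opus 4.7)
The argument will rely on three ingredients already available: the small-bulk bound \eqref{small_bulk}, the positive-distance bound \eqref{dist_ep}, and the support/normalization of $\varphi_\varepsilon$. Throughout I will fix $\tilde{C} \gg C_*$ also satisfying $\tilde{C} > 1/C_2$, with $C_2$ from \eqref{dist_ep}.

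For \eqref{S_inc_zero} I will argue pointwise. If $(x,v) \in \mathfrak{S}_{\mathrm{B}}$, then by \eqref{dist_ep} the ball of radius $\varepsilon/\tilde{C} < C_2 \varepsilon$ around $(x,v)$ is disjoint from $\bar{\Omega} \times \mathbb{R}^3 \setminus \mathcal{O}_{\varepsilon, C_* \varepsilon}$. Since the integrand in \eqref{cutoff} is supported in this ball, $\chi_\varepsilon(x,v) = 0$.

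For \eqref{chi_W11bound1} and \eqref{chi_W11bound2} I will exploit the decomposition
\[
1 - \chi_\varepsilon \ = \ \big(\mathbf{1}_{\mathcal{O}_{\varepsilon, C_* \varepsilon}} * \varphi_\varepsilon\big) \ + \ \big(\mathbf{1}_{(\bar{\Omega} \times \mathbb{R}^3)^c} * \varphi_\varepsilon\big),
\]
valid because $\mathcal{O}_{\varepsilon, C_* \varepsilon} \subset \bar{\Omega} \times \mathbb{R}^3$ (so the two indicators on the right are disjoint). For the first summand I will switch the order of integration via Fubini, use the elementary bound $e^{-\theta|v|^2} \leq e^{\theta} e^{-\theta|u|^2/2}$ on the support of $\varphi_\varepsilon$ (where $|v - u| \leq \varepsilon/\tilde{C} \leq 1$), and invoke \eqref{small_bulk} with exponent $\theta/2$. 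This will yield a contribution $\lesssim \varepsilon$ in the $L^1$ estimate; for the derivative estimate, pushing $\partial$ onto $\varphi_\varepsilon$ gives $\|\partial \varphi_\varepsilon\|_{L^1} \cdot \iint_{\mathcal{O}_{\varepsilon, C_* \varepsilon}} e^{-\theta |u|^2/2} \, du \, dy \lesssim \varepsilon^{-1} \cdot \varepsilon = 1$.

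The second, \emph{outside} summand is geometric rather than bulk: after integrating the $u$-variable out of the mollifier it depends only on $x$ and is supported in the thin strip $\{x \in \Omega : \mathrm{dist}(x, \partial \Omega) < \varepsilon/\tilde{C}\}$, whose volume is $\lesssim \varepsilon$. The derivative $\partial_v$ annihilates it, while $\partial_x$ produces a pointwise factor of size $\lesssim \varepsilon^{-1}$; after the $x$- and $v$-integrations these two pieces contribute $\lesssim \varepsilon$ to \eqref{chi_W11bound1} and $\lesssim 1$ to \eqref{chi_W11bound2} respectively. The main subtlety I anticipate is the careful bookkeeping of this partition of $(\bar{\Omega} \setminus \mathcal{O}_{\varepsilon, C_* \varepsilon})^c$ into a bulk piece (handled by \eqref{small_bulk}) and a boundary piece (handled by the $\varepsilon$-strip): conflating the two would force one to control an essentially surface contribution using a bulk estimate. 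Once the partition and the value of $\tilde{C}$ are fixed, the remaining calculations reduce to routine Young-type convolution bounds.
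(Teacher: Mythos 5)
Your proposal is correct and rests on the same ingredients the paper uses---\eqref{small_bulk}, \eqref{dist_ep}, and Young-type bounds on the mollifier---but your bookkeeping of the convolution estimates is cleaner and in one place it actually repairs a slip in the printed proof. The paper's proof of \eqref{chi_W11bound1} passes from $1-\mathbf{1}_{\bar{\Omega}\times\mathbb{R}^3\setminus\mathcal{O}_{\varepsilon,C_*\varepsilon}}$ directly to $\mathbf{1}_{\mathcal{O}_{\varepsilon,C_*\varepsilon}}$, silently discarding the $(\bar{\Omega}\times\mathbb{R}^3)^c$ piece that you isolate as a boundary strip of $x$-measure $O(\varepsilon)$; your two-term decomposition makes that contribution explicit. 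More importantly, the paper's proof of \eqref{chi_W11bound2} keeps the complementary indicator $\mathbf{1}_{\bar{\Omega}\times\mathbb{R}^3\setminus\mathcal{O}_{\varepsilon,C_*\varepsilon}}$ (which has $O(1)$ mass) and invokes the claim $|\partial\varphi_\varepsilon| = O(\varepsilon^{-6}\tilde{C}^6)\mathbf{1}_{B_{\mathbb{R}^6}(0;\varepsilon/\tilde{C})}$; but with $\varphi_\varepsilon = \lambda^{-6}\varphi(\cdot/\lambda)$, $\lambda=\varepsilon/\tilde{C}$, the correct pointwise size is $O(\lambda^{-7})$, so the printed chain yields $O(\varepsilon^{-1})$ rather than $O(1)$. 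Your remedy---use $\int\partial\varphi_\varepsilon=0$ to rewrite $\partial\chi_\varepsilon = -\mathbf{1}_{\mathcal{O}_{\varepsilon,C_*\varepsilon}\cup(\bar{\Omega}\times\mathbb{R}^3)^c} * \partial\varphi_\varepsilon$, apply Young via $\|\partial\varphi_\varepsilon\|_{L^1}\sim\varepsilon^{-1}$, and let \eqref{small_bulk} produce the compensating $\varepsilon$---is the argument that actually closes, and it is the same device the paper deploys for the boundary analogue \eqref{chi_W11bound_boundary2} in Lemma~\ref{small_boundary_lemma}, so your bulk version is the consistent one.

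Two small points to tidy in the write-up. By the explicit construction \eqref{O_ep}, $\mathcal{O}_{\varepsilon,C_*\varepsilon}$ contains $\mathbb{R}^3\times B_{\mathbb{R}^3}(0;C_*\varepsilon)$ and unions of balls that protrude slightly beyond $\bar{\Omega}$, so the literal inclusion $\mathcal{O}_{\varepsilon,C_*\varepsilon}\subset\bar{\Omega}\times\mathbb{R}^3$ asserted in Lemma~\ref{open_cover} does not hold verbatim; your disjoint-indicator identity should therefore be replaced by the inequality $\mathbf{1}_{A\cup B}\leq \mathbf{1}_A+\mathbf{1}_B$, which is all the estimate uses. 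Similarly, after Fubini the $(y,u)$-domain for the bulk term is the $\varepsilon/\tilde{C}$-neighbourhood of $\bar{\Omega}\times\mathbb{R}^3$ rather than $\Omega\times\mathbb{R}^3$; the overshoot in $y$ has measure $O(\varepsilon)$ and can be absorbed alongside the boundary-strip contribution, but it is worth a sentence so that \eqref{small_bulk} is applied to a set it actually covers.
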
 

\begin{proof}
Firstly we prove (\ref{S_inc_zero}). Let $(x,v) \in \mathfrak{S}_{\mathrm{B}}.$ Due to (\ref{mollifier}) if $|(x,v)-(y,u)| \geq  \varepsilon / \tilde{C}$ then $\varphi_{\varepsilon} (x-y, v-u) =0.$ 
Therefore
\begin{equation}\notag
\begin{split}
(\ref{cutoff}) = \iint_{B_{\mathbb{R}^{6}}((x,v); \varepsilon/\tilde{C})  } \mathbf{1}_{\bar{\Omega} \times \mathbb{R}^{3} \backslash \mathcal{O}_{\varepsilon, C_{*} \varepsilon} } (y,u) \varphi_{\varepsilon} (x-y, v-u) \mathrm{d}y \mathrm{d}u.
\end{split}
\end{equation}
On the other hand, due to (\ref{dist_ep}) with $\varepsilon_{1} = \varepsilon$ and $\tilde{C}\gg C_{*}$, we have $(y,u) \in \mathcal{O}_{\varepsilon, C_{*} \varepsilon}$ and 
\[
\mathbf{1}_{\bar{\Omega} \times \mathbb{R}^{3} \backslash \mathcal{O}_{\varepsilon, C_{*} \varepsilon} } (y,u)\equiv0, \ \text{ on } \  (y,u) \in B_{\mathbb{R}^{6}}((x,v); \varepsilon/\tilde{C}).
\]
Therefore we conclude $\chi_{\varepsilon}(x,v)=0$ and (\ref{S_inc_zero}).

Secondly we deduce (\ref{chi_W11bound1}). We use (\ref{small_bulk}) with $\varepsilon_{1} = \varepsilon$ to have
\begin{equation*}
\begin{split}
&\iint_{\mathbb{R}^{3} \times \mathbb{R}^{3}}
\iint_{\mathbb{R}^{3} \times \mathbb{R}^{3}} 
\big[1-\mathbf{1}_{\bar{\Omega} \times \mathbb{R}^{3} \backslash \mathcal{O}_{\varepsilon, C_{*} \varepsilon}} (y,u)\big] \varphi_{\varepsilon}(x-y, v-u) e^{-\theta |v|^{2}} \mathrm{d}u
 \mathrm{d}y
 \mathrm{d}v \mathrm{d}x\\
& \leq \iint_{\mathbb{R}^{3} \times \mathbb{R}^{3}} \mathbf{1}_{  \mathcal{O}_{\varepsilon, C_{*} \varepsilon}} (y,u)
e^{-\frac{\theta}{2}| u|^{2}}
 \mathrm{d}u \mathrm{d}y  \iint_{\mathbb{R}^{3} \times \mathbb{R}^{3}}
\varphi_{\varepsilon} (x-y, v-u) e^{  \theta |v-u|^{2}}
 \mathrm{d}v \mathrm{d}x
\\
&\leq \ C_{1}  \frac{ \varepsilon}{2}\iint_{B_{\mathbb{R}^{6}}(0; \varepsilon/ \tilde{C})  }
\varphi_{\varepsilon} (x , v )  e^{\theta  {\varepsilon}^{2} / {\tilde{C}}^{2}}
 \mathrm{d}v \mathrm{d}x\\
 &\lesssim \  \varepsilon,
\end{split}
\end{equation*}
where we used
\begin{equation}\label{exponent}
-\theta |v|^{2}=\theta|v-u|^{2}- \theta|v-u|^{2} -\theta |v|^{2}  \leq\theta|v-u|^{2}   -\frac{\theta}{2} |u|^{2}.
\end{equation}

Thirdly we prove (\ref{chi_W11bound2}). Note that  \begin{equation}\notag
\begin{split}
 {|}\partial \varphi_{\varepsilon} (x,v)  {|} = (\varepsilon / \tilde{C})^{-6}  \frac{O(|(x,v)|)}{|(x,v)|}\varphi^{\prime} (\frac{\sqrt{|x|^{2} + |v|^{2}}}{ \varepsilon / \tilde{C}}) = O(\varepsilon^{-6} \tilde{C}^{6}) \mathbf{1}_{B_{\mathbb{R}^{6}} (0; \varepsilon / \tilde{C})  } (x,v),
\end{split}
\end{equation}
and the LHS of (\ref{chi_W11bound2}) equals 
\begin{equation}\notag
\begin{split}
& \iint_{\mathbb{R}^{3} \times \mathbb{R}^{3}}
\iint_{\mathbb{R}^{3} \times \mathbb{R}^{3}} 
 \mathbf{1}_{\bar{\Omega} \times \mathbb{R}^{3} \backslash \mathcal{O}_{\varepsilon, C_{*} \varepsilon}} (y,u) \partial\big[ \varphi_{\varepsilon}(x-y, v-u) {\big]} e^{-\theta |v|^{2}}  \mathrm{d}u
 \mathrm{d}y
 \mathrm{d}v \mathrm{d}x
\\
&= \ O(\varepsilon^{-6} \tilde{C}^{6})\iint_{\mathbb{R}^{3} \times \mathbb{R}^{3}}
 \mathbf{1}_{\bar{\Omega} \times \mathbb{R}^{3} \backslash \mathcal{O}_{\varepsilon, C_{*} \varepsilon}} (y,u) e^{-  \frac{\theta}{2}|u|^{2}}
 \iint_{\mathbb{R}^{3} \times \mathbb{R}^{3}} 
\mathbf{1}_{B_{\mathbb{R}^{6}} (0; \varepsilon/ \tilde{C})  }(x-y,v-u)  
e^{\theta |v-u|^{2}}
\mathrm{d}u \mathrm{d}y \mathrm{d}v \mathrm{d}x\\
& = \  O (\varepsilon^{-6})  \iint_{\mathbb{R}^{3} \times \mathbb{R}^{3}} 
\mathbf{1}_{B_{\mathbb{R}^{6}} (0; \varepsilon/ \tilde{C})  }(x ,v )  
  \mathrm{d}v \mathrm{d}x\\
& =  \ O (\varepsilon^{-6}) O(\varepsilon^{6}) \\
& \lesssim \ 1.
\end{split}
\end{equation}\end{proof}

\begin{lemma}\label{small_boundary_lemma}
With the same constants $\tilde{C}\gg C_{*} \gg 1$ in Lemma \ref{cut_off} and $0< \varepsilon< \varepsilon_{0},$
\begin{equation}\label{S_inc_zero_boundary}
\mathfrak{S}_{\mathrm{B}} \cap [ \partial\Omega \times \mathbb{R}^{3} ]  \ \subset \ \big\{  (x,v) \in \partial\Omega \times \mathbb{R}^{3} :  \chi_{\varepsilon}(x,v)=0 \big\}.
\end{equation}
Moreover if $|(x,v)-(y,u)|\leq \varepsilon/ \tilde{C}$ for $\tilde{C} \gg C_{*}\gg 1$ then 
\begin{equation}\label{small_boundary}
\int_{\partial\Omega} \int_{n(x)\cdot v <0} \mathbf{1}_{  \mathcal{O}_{\varepsilon, C_{*} \varepsilon}}(x-y, v-u) e^{- \theta |v-u|^{2}} 
|n(x-y) \cdot (v-u)|
\mathrm{d}v \mathrm{d}S_{x} \ \lesssim  \ \varepsilon,
\end{equation}
and 
\begin{eqnarray}
\int_{ \gamma_{-}  }[1- \chi_\varepsilon(x,v) ]e^{-\theta|v|^{2}}  \mathrm{d}\gamma &   \lesssim_{\Omega }  & \varepsilon,\label{chi_W11bound_boundary1}\\ 
\int_{     \gamma_{-}     } |\partial \chi_\varepsilon(x,v)| e^{-\theta|v|^{2}} \mathrm{d}\gamma & 
  \lesssim_{\Omega }& 1.\label{chi_W11bound_boundary2}
\end{eqnarray} 
\end{lemma}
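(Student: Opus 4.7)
The lemma bundles four claims: the vanishing \eqref{S_inc_zero_boundary} of $\chi_\varepsilon$ on $\mathfrak{S}_{\mathrm{B}}\cap[\partial\Omega\times\R^3]$; the key trace estimate \eqref{small_boundary}; and the boundary $L^1$ estimates \eqref{chi_W11bound_boundary1}--\eqref{chi_W11bound_boundary2}. The plan is to dispatch the first claim by the convolution-support argument of Lemma \ref{cut_off}, then prove the geometric heart \eqref{small_boundary}, and finally reduce the last two estimates to \eqref{small_boundary} via Fubini applied to the mollifier representation \eqref{cutoff}.

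For \eqref{S_inc_zero_boundary}, I would repeat the proof of \eqref{S_inc_zero} verbatim: for $(x,v)\in\mathfrak{S}_{\mathrm{B}}$ with $x\in\partial\Omega$, the distance bound \eqref{dist_ep} together with $\tilde{C}\gg C_*$ gives $B_{\R^6}((x,v);\varepsilon/\tilde{C})\subset \mathcal{O}_{\varepsilon,C_*\varepsilon}$, so the indicator $\mathbf{1}_{\bar{\Omega}\times\R^3\setminus\mathcal{O}_{\varepsilon,C_*\varepsilon}}$ vanishes on the support of $\varphi_\varepsilon(x-\cdot,v-\cdot)$ and therefore $\chi_\varepsilon(x,v)=0$. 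Nothing in that argument used whether $x$ lies inside $\Omega$ or on $\partial\Omega$.

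The substance is \eqref{small_boundary}. I would decompose $\mathcal{O}_{\varepsilon,C_*\varepsilon}$ using the finite union \eqref{O_ep}, and for each piece $\mathcal{O}_{m,i,j,\varepsilon,C_*\varepsilon,\ell}$ exploit that the velocity factor $\Theta_{m,i,j,\varepsilon,C_*\varepsilon,\ell}$ in \eqref{Theta} forces $v-u$ to be almost tangent to $\partial\Omega$ at the representative point, i.e.\ $|(v-u)\cdot n_{m,i,j,\varepsilon}|\lesssim\varepsilon\,(|v-u|\vee 1)$, while the position factor confines $x-y$ to a $C_*\varepsilon$-tube around a straight trajectory emanating from the patch \eqref{Rij}. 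I would then split the admissible boundary points $x\in\partial\Omega$ into a \emph{grazing-start} regime, where $x-y$ is within arclength $\lesssim\sqrt{\varepsilon}$ of the starting patch so that $n(x-y)\approx n_{m,i,j,\varepsilon}$ by \eqref{eta_C2}, giving $|n(x-y)\cdot(v-u)|\lesssim\varepsilon$ and, by curvature, a surface slice through the tube of area $\lesssim\varepsilon^{3/2}$; and a \emph{re-entry} regime, where transversality at the re-entry point yields surface slice area $\lesssim\varepsilon^2$ but only the trivial weight $|n(x-y)\cdot(v-u)|\lesssim 1$ is available. Combining these with the velocity estimate $\int \mathbf{1}_{\Theta_{m,i,j,\varepsilon,C_*\varepsilon,\ell}}e^{-\theta|v-u|^2}\,\mathrm{d}v\lesssim\varepsilon^2$ from the proof of \eqref{small_bulk}, each piece contributes at most $\varepsilon^{9/2}$ (grazing) plus $\varepsilon^4$ (re-entry), and summing over the $O(\varepsilon^{-3})$ patches $(m,i,j,\ell)$ produces the total bound $\lesssim\varepsilon$.

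Assuming \eqref{small_boundary}, the last two estimates are routine. For \eqref{chi_W11bound_boundary1}, I would write $1-\chi_\varepsilon = \mathbf{1}_{\mathcal{O}_{\varepsilon,C_*\varepsilon}}*\varphi_\varepsilon$ (modulo a harmless boundary-layer term from $\mathbf{1}_{\bar{\Omega}}$), apply Fubini to bring the mollifier outside the $\gamma_-$ integral, use $|n(x)\cdot v|\leq |n(x-y')\cdot(v-u')|+O(\varepsilon/\tilde{C})\langle v\rangle$ together with \eqref{exponent} to rewrite both the weight and the exponential in terms of $v-u'$, and invoke \eqref{small_boundary} on the inner integral; since $\int\varphi_\varepsilon=1$ this yields the bound $\lesssim\varepsilon$. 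For \eqref{chi_W11bound_boundary2} I would apply the same Fubini to $\partial\chi_\varepsilon = \mathbf{1}_{\bar{\Omega}\setminus\mathcal{O}_{\varepsilon,C_*\varepsilon}}*\partial\varphi_\varepsilon$, and use $\iint|\partial\varphi_\varepsilon|\lesssim \varepsilon^{-1}$: the $\varepsilon^{-1}$ exactly compensates the $\varepsilon$ from \eqref{small_boundary} and produces the $O(1)$ bound. The hard part is the geometric decomposition of $\mathcal{X}_{m,i,j,\varepsilon,C_*\varepsilon,\ell}\cap(\partial\Omega-y)$ into the grazing-start and re-entry pieces, where the curvature-driven $\varepsilon^{3/2}$ at the start must be balanced against the transversality-driven $\varepsilon^2$ at the re-entry to match the weight $|n(x-y)\cdot(v-u)|$ in the right way.
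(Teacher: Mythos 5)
Your treatment of \eqref{S_inc_zero_boundary}, \eqref{chi_W11bound_boundary1} and \eqref{chi_W11bound_boundary2} matches the paper: \eqref{S_inc_zero_boundary} is verbatim the argument for \eqref{S_inc_zero}, and the last two follow from \eqref{small_boundary} by Fubini on the mollifier, $|n(x)\cdot v|\leq|n(x-y)\cdot(v-u)|+O(\varepsilon/\tilde C)\langle v\rangle$, and $\iint|\partial\varphi_\varepsilon|\lesssim\varepsilon^{-1}$. You are right that \eqref{small_boundary} is the crux.

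Your proposal for \eqref{small_boundary}, however, takes a genuinely different and, as written, incomplete route. You fix a tube $\mathcal{O}_{m,i,j,\varepsilon,C_*\varepsilon,\ell}$ and try to control where it meets $\partial\Omega$, splitting into ``grazing-start'' (within arclength $\sim\sqrt\varepsilon$ of the starting patch) and ``re-entry'' (transversal exit). This dichotomy is not exhaustive for non-convex domains: the boundary can come within $C_*\varepsilon$ of the trajectory at intermediate points, and the exit itself need not be transversal. Moreover the $\varepsilon^{3/2}$ area estimate for the grazing-start slice relies on a lower curvature bound that is not available; when $\partial\Omega$ is flat along a trajectory the intersection strip has area $O(\varepsilon)$, not $O(\varepsilon^{3/2})$ (the numerology still scrapes through because the weight $|n\cdot(v-u)|$ remains $O(\varepsilon)$ there, but your stated bound is wrong and your decomposition leaves the hard case open). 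The paper avoids all of this by working dually: it fixes a small boundary rectangle $\mathcal{R}_{m,i,j,\varepsilon,C_*\varepsilon}$ and controls the \emph{set of velocities}. The key fact (Lemma \ref{cone_lemma}, which you do not use) is that if $((x_1,x_2,\eta_{m_0}(x_1,x_2))-y,v)\in\mathcal{O}_{\varepsilon,C_*\varepsilon}$ with $|y|\leq\varepsilon/\tilde C$, and the angle $n_{m_0}(0,0)\cdot\tfrac{v}{|v|}$ is bounded away from $0$ by $s_*C_2\sqrt\varepsilon$, then in fact $|n_{m_0}(0,0)\cdot\tfrac{v}{|v|}|\leq C_4\sqrt\varepsilon$ unless $|v|<\varepsilon^{1/3}$. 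Together with the opposite regime (small angle), this confines $v$ to a band of angular measure $\lesssim\sqrt\varepsilon$ with weight $|n\cdot v|\lesssim\sqrt\varepsilon|v|$, giving the $\varepsilon^3$-per-rectangle bound \eqref{claim_small_boundary} and hence \eqref{small_boundary} after summing over the $O(\varepsilon^{-2})$ rectangles per patch. You should prove a statement like Lemma \ref{cone_lemma}, or at least make the re-entry/intermediate-grazing geometry rigorous without assuming nonvanishing curvature, before the trace bound \eqref{small_boundary} can be regarded as established.
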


The following fact is crucial to prove Lemma \ref{small_boundary_lemma} and especially (\ref{small_boundary}):

\begin{lemma}\label{cone_lemma}
 We fix $m_{0} =1,2,\cdots, M_{\Omega, \delta}$ in (\ref{boundary_decompose}). From (\ref{Um}), we may assume (up to rotations and translations) there exists a $C^{2}-$function $\eta_{m_{0} } : [-\delta, \delta] \times [-\delta, \delta]   \rightarrow \mathbb{R},$ whose graph is the boundary $\mathcal{U}_{m_{0} } \cap \partial\Omega$.

Let $(x_{1},x_{2}) \in \mathcal{A}_{m_{0} } \ \cap \ [- {\delta}/{2}, {\delta}/{2}] \times [-{\delta}/{2}, {\delta}/{2}]$ and $(x_{1},x_{2}) \in \mathcal{R}_{m_{0} ,i_{0} ,j_{0} , \varepsilon, C_{*} \varepsilon}$ for $|i_{0} |,|j_{0} |\leq N_{\varepsilon}$. (see (\ref{lattice}), (\ref{Rij}), and (\ref{A_R}))
 
Suppose $i)$ $|y| \leq \varepsilon/ \tilde{C} $ and
\begin{equation}\label{incl_O}
\big((x_{1}, x_{2}, \eta_{m_{0} }(x_{1},x_{2})) - y,v\big)  \ \in \ \mathcal{O}_{\varepsilon, C_{*} \varepsilon},
\end{equation}
and $ii)$ for large but fixed $s_{*} \gg1$.
\begin{equation}\label{C2_bound}
- 1 \leq n_{m_{0} }(0,0)\cdot \frac{v}{|v|} \leq - s_{*}  {C_{2}}  \sqrt{\varepsilon},  \ \ \ \text{with} \ C_{2}   : =  \sqrt{\frac{8 C_{*}}{3}} \big[1+ || {\eta }_{m_{0}}  ||_{C^{2} (\mathcal{A}_{m _{0}})}\big]^{1/2}.
\end{equation}

Then either $|v|<  \varepsilon^{1/3}$ 
or there exists $(i,j) \in [-N_{1} + i_{0}, N_{1} + i_{0}] \times [-N_{1} + j_{0}, N_{1} + j_{0}],$ with
\begin{equation}
N_{1}:=  \big\lfloor \frac{8C_{3}}{\sqrt{\varepsilon}}  \big\rfloor, \ \ 
C_{3}   : =    
\frac{4 C_{*} + 8C_{*} \big[1+  || \eta_{m _{0}} ||_{C^{1} (\mathcal{A}_{m_{0} })} \big]^{1/2} + 2/\tilde{C}  }{C_{2}}
,\label{C3}
\end{equation}
such that
\[
\big((x_{1},x_{2},\eta_{m_{0} }(x_{1},x_{2}))-y,v\big) \ \in \ \bigcup_{0\leq \ell \leq L_{\varepsilon}} \mathcal{O}_{m_{0} ,i,j,\varepsilon, C_{*} \varepsilon, \ell} \ \cap \ \bar{\Omega} \times \{v \in \mathbb{R}^{3} : |v| \geq  \varepsilon^{1/3}\},
\]
and
\begin{equation}\label{C4}
\big|n_{m_{0} }(0,0) \cdot \frac{v}{|v|}\big| \ \leq \ C_{4} \sqrt{\varepsilon} \ \ \ \text{with} \ C_{4} = C_{3} \big[1+ || {\eta_{m_{0} }}  ||_{C^{2} (\mathcal{A}_{m_{0} })}\big] .
\end{equation}

\end{lemma}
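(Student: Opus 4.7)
The plan is to unpack the hypothesis $(\tilde{x}, v) \in \mathcal{O}_{\varepsilon, C_{*}\varepsilon}$ in \eqref{incl_O}, where I set $\tilde{x} := (x_1, x_2, \eta_{m_0}(x_1, x_2)) - y$. By the decomposition \eqref{O_ep}, either $v \in B_{\mathbb{R}^3}(0; C_{*}\varepsilon)$, giving $|v| \le C_{*}\varepsilon + \varepsilon/\tilde{C} < \varepsilon^{1/3}$ for $\varepsilon$ small and the first alternative; or $(\tilde{x}, v) \in \mathcal{O}_{m, i', j', \varepsilon, C_{*}\varepsilon, \ell'}$ for some indices $(m, i', j', \ell')$ a priori unrelated to $(m_0, i_0, j_0)$. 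I focus on this second case and reconstruct $(i, j)$ inside patch $m_0$ within the claimed window.

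The core analytic step is to locate $(i, j)$ with $\big|\frac{v}{|v|} \cdot n_{m_0, i, j, \varepsilon}\big| < 8 C_\eta C_{*}\varepsilon$, so that $v \in \Theta_{m_0, i, j, \varepsilon, C_{*}\varepsilon, \ell}$ for an appropriate $\ell$. The splitting
\[
\frac{v}{|v|}\cdot n_{m_0, i, j, \varepsilon} = \frac{v}{|v|}\cdot n_{m_0}(0, 0) + \frac{v}{|v|}\cdot \bigl( n_{m_0, i, j, \varepsilon} - n_{m_0}(0, 0) \bigr),
\]
combined with Taylor expansion via \eqref{eta_C2}, shows that the second term varies linearly in the lattice displacement $(\varepsilon i, \varepsilon j)$ with slope of order $\|\eta_{m_0}\|_{C^2}$ times the tangential length of $v/|v|$. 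Hypothesis (ii) places the first term in $[-1, -s_{*} C_2 \sqrt{\varepsilon}]$. Moving the lattice point in the direction that maximizes the normal variation, an intermediate-value argument produces the desired crossing to zero at a lattice displacement at most $8 C_3\sqrt{\varepsilon}$ from $(x_1, x_2)$; the constants $C_2$ and $C_3$ are calibrated so that the $\sqrt{\varepsilon}$-gap of (ii) and the Hessian-bounded slope combine to force this scale. The degenerate case in which the tangential component of $v$ is too small to drive the crossing is trapped by the alternative $|v| < \varepsilon^{1/3}$. Since $(x_1, x_2) \in \mathcal{R}_{m_0, i_0, j_0, \varepsilon, C_{*}\varepsilon}$, the resulting $(i, j)$ sits inside the window of radius $N_1 = \lfloor 8 C_3/\sqrt{\varepsilon}\rfloor$ around $(i_0, j_0)$; the conclusion bound $\big|n_{m_0}(0, 0)\cdot \frac{v}{|v|}\big| \le C_4\sqrt{\varepsilon}$ with $C_4 = C_3 (1 + \|\eta_{m_0}\|_{C^2})$ follows by the triangle inequality applied to the same Taylor expansion.

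It remains to verify the spatial inclusion $\tilde{x} \in \bigcup_{x' \in \mathcal{X}_{m_0, i, j, \varepsilon, C_{*}\varepsilon, \ell}} B_{\mathbb{R}^3}(x'; C_{*}\varepsilon)$. This uses the original inclusion in $\mathcal{O}_{m, i', j', \varepsilon, C_{*}\varepsilon, \ell'}$ and re-parametrizes the tangential trajectory through $\tilde{x}$, starting from the new base point $(c_{m_0, i, j, \varepsilon}, \eta_{m_0}(c_{m_0, i, j, \varepsilon}))$ in the direction of $v/|v|$ projected to the tangent plane there. The base-point shift is $O(\sqrt{\varepsilon})$, the direction correction is $O(\varepsilon)$, the $|y|$-rounding is $O(\varepsilon/\tilde{C})$, and the lattice rounding contributes $O(\varepsilon)$; for $\tilde{C} \gg C_{*}$ all of these fit inside the $C_{*}\varepsilon$-cushion defining the $\mathcal{X}$-tube.

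The main obstacle is the intermediate-value argument in the second paragraph: the crossing must occur on the $\sqrt{\varepsilon}$-scale, which hinges on the quadratic variation of the outward normal off $(0,0)$ --- precisely the reason the constants $C_2, C_3, C_4$ carry the $(1 + \|\eta_{m_0}\|_{C^2})^{1/2}$ and $(1+\|\eta_{m_0}\|_{C^2})$ factors. Threading these constants cleanly through the Taylor expansion and verifying that every perturbation in the last step stays inside the $C_{*}\varepsilon$-cushion is the delicate bookkeeping on which the lemma rests.
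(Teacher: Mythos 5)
The paper proves Lemma~\ref{cone_lemma} by working with the \emph{spatial} trajectory: from the hypothesis $(\tilde x,v)\in\mathcal O_{m,i',j',\varepsilon,C_*\varepsilon,\ell'}$ it extracts a point $z$ on the tube near $\tilde x$ and the tube direction $\hat u$, defines $\Phi(\tau)=z_3-\hat u_3\tau-\eta_{m_0}(z_1-\hat u_1\tau,z_2-\hat u_2\tau)$, and runs the intermediate value theorem on $\Phi$. The angular gap \eqref{C2_bound} forces $-\hat u_3\lesssim -\sqrt\varepsilon$, so $\Phi$ decreases linearly at rate $\sqrt\varepsilon$ while the $C^2$-Hessian contributes only $O(\tau^2)$; hence the trajectory re-hits $\partial\Omega$ at $\tau_0\leq C_3\sqrt\varepsilon$. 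Reading off the lattice index of the exit point $x_{\mathbf b}(z,\hat u)$ gives \emph{both} the window $|i-i_0|,|j-j_0|\leq N_1$ and the fact that the tube through $\tilde x$ already emanates from $\mathcal{R}_{m_0,i,j,\varepsilon,C_*\varepsilon}$, so the $\mathcal O_{m_0,i,j,\varepsilon,C_*\varepsilon,\ell}$-inclusion is inherited from the hypothesis rather than re-proved.

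Your proposal runs the IVT on a different quantity: the lattice-indexed normal alignment $(i,j)\mapsto \frac{v}{|v|}\cdot n_{m_0,i,j,\varepsilon}$. This is a genuinely different route, and it has a gap. The gradient of that map in $(i,j)$ is, to leading order, $D^2\eta_{m_0}(0,0)\cdot\frac{v_\tau}{|v|}$ (tangential part of $v$ contracted with the Hessian). Nothing in the hypotheses prevents that from vanishing — $\partial\Omega$ may be locally flat, or cylindrical with axis along $v_\tau$, or $v_\tau$ may lie in the kernel of the Hessian — in which case the alignment does not move off $-s_*C_2\sqrt\varepsilon$ as $(i,j)$ ranges over the $N_1$ window and your crossing never occurs. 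The lemma must nevertheless be proved in these cases; the paper's $\Phi$-IVT is immune, because the rate $\Phi'(0)\lesssim-\sqrt\varepsilon$ comes from the angular gap, not from curvature.

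There is a second, independent gap. Even when your IVT does cross, it produces a one-parameter family of admissible $(i,j)$, with no mechanism to select the one for which $\tilde x$ actually lies in the $C_*\varepsilon$-fattened tube $\bigcup_{x'\in\mathcal X_{m_0,i,j,\varepsilon,C_*\varepsilon,\ell}}B(x';C_*\varepsilon)$. Your spatial paragraph asserts that "the base-point shift is $O(\sqrt\varepsilon)$ ... all of these fit inside the $C_*\varepsilon$-cushion," but this conflates longitudinal and transverse displacement: a shift of the base point \emph{along} the tube direction is admissible at any length, while a shift \emph{across} it must be $O(\varepsilon)$, and $O(\sqrt\varepsilon)\gg C_*\varepsilon$. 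In the paper this decomposition is precisely what the $\Phi$-IVT performs (the root $\tau_0\sim\sqrt\varepsilon$ is a longitudinal parameter, so there is no transverse $O(\sqrt\varepsilon)$ error to absorb), whereas in your sketch it is unaddressed. In short: finding $(i,j)$ from the velocity condition and then trying to re-derive the spatial inclusion reverses the logical flow that makes the paper's argument close; the spatial trajectory is the primitive object and the velocity alignment is a consequence, not the other way around.
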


Remark that the constant $N_{1}$ in (\ref{C3}) does not depend on  $x,y,v$.

 \begin{proof}[Proof of Lemma \ref{cone_lemma}]
 Without loss of generality ({up to rotations and translations}), we may assume 
 \begin{equation}\label{cone_WLOG}
 (i_{0},j_{0})=(0,0) \ \text{ and }   \ \eta_{m_{0}}(0,0)=0 \ \text{ and } \ \nabla \eta_{m_{0}} (0,0)=0. 
 \end{equation}
 
 Consider the case of $|v|\geq  \varepsilon^{1/3}.$ Since $\big((x_{1}, x_{2}, \eta_{m_{0}} (x_{1},x_{2}))-y,v\big)\in \mathcal{O}_{\varepsilon, C_{*} \varepsilon}$ we use the definition of $\mathcal{O}_{\varepsilon, C_{*} \varepsilon}$ in (\ref{O_ep}) to have 
\begin{equation}\label{conclusion_1}
\text{ either } \ \ \underbrace{|v| < C_{*} \varepsilon}_{(\ref{conclusion_1})-(i)}  \ \ \text{ or } \ \ \underbrace{(x-y,v) \in \bigcup_{m,i,j,\ell} \mathcal{O}_{m,i,j,\varepsilon,C_{*} \varepsilon, \ell}}_{(\ref{conclusion_1})-(ii)}.
\end{equation}
For small $0< \varepsilon \ll 1$, we can exclude the case of $(\ref{conclusion_1})-(i)$ since $|v|>  \varepsilon^{1/3}\gg C_{*} \varepsilon.$

Consider the case of $(\ref{conclusion_1})-(ii)$. In this case, we claim that
\begin{equation}\label{same_m_0}
\big((x_{1},x_{2}, \eta_{m_{0} }(x_{1},x_{2}))-y,v\big) \ \in  \ \bigcup_{i,j,\ell} \mathcal{O}_{m_{0} , i,j,\varepsilon, C_{*} \varepsilon,\ell}. 
\end{equation}
From $(\ref{conclusion_1})-(ii)$ and the definition of $\mathcal{O}_{m_{0}, i,j, \e, C_{*} \e, \ell}$ in (\ref{O_mijl}), there exist $m,i,j,\ell$ such that
\[
\big((x_{1},x_{2}, \eta_{m_{0}}(x_{1},x_{2}))-y,v\big) \ \in  \ \Big[ \bigcup_{p\in \mathcal{X}_{m,i,j,\varepsilon, C_{*} \varepsilon,\ell}} B_{ {\mathbb{R}^{3}} }(p;C_{*} \varepsilon) \Big] \times \Theta_{m,i,j,\varepsilon,C_{*} \varepsilon,\ell}.
\]
In particular, there exists $p \in \mathcal{X}_{m,i,j,\varepsilon, C_{*}\varepsilon,\ell}$ satisfying $$\big|p- \big((x_{1},x_{2}, \eta_{m_{0}}(x_{1},x_{2}))-y \big)\big|< C_{*} \varepsilon.$$

By the definition of $\mathcal{X}_{m,i,j,\varepsilon, C_{*}\varepsilon,\ell}$ in (\ref{cal_X}), 
\[
p= (\bar{p}_{1}, \bar{p}_{2},   {\eta_{m }}  (\bar{p}_{1}, \bar{p}_{2})) 
+ \bar{\tau} \big[ \cos \bar{\theta} \hat{x}_{1,m,i,j,\varepsilon} + \sin\bar{\theta} \hat{x}_{2,m,i,j,\varepsilon}\big] + \bar{s} n_{m,i,j,\varepsilon},
\]
for some 
\begin{eqnarray*}
(\bar{p}_{1}, \bar{p}_{2})&\in&\mathcal{R}_{m,i,j,\varepsilon,C_{*} \varepsilon},\\
\bar{\theta} &\in& (\varepsilon \ell- C_{*} \varepsilon, \varepsilon \ell + C_{* } \varepsilon),\\
\bar{\tau} &\in& [0,t_{\mathbf{f}} ( (\bar{p}_{1} , \bar{p}_{2}, \eta_{m } (\bar{p}_{1}, \bar{p}_{2})), \cos\bar{\theta} \hat{x}_{1,m,i,j,\varepsilon} + \sin\bar{\theta} \hat{x}_{2,m,i,j,\varepsilon}  ) ],\\
\bar{s}  &\in& [-\e,\e]. 
\end{eqnarray*}

By the definition of $t_{\mathbf{f}}$ in (\ref{forwardexit}),
\[
z : = p- \bar{s} n_{m,i,j,\varepsilon} = (\bar{p}_{1}, \bar{p}_{2},  {\eta_{m }}  (\bar{p}_{1}, \bar{p}_{2})) 
+ \bar{\tau} \big[ \cos \bar{\theta} \hat{x}_{1,m,i,j,\varepsilon} + \sin\bar{\theta} \hat{x}_{2,m,i,j,\varepsilon}\big] \in \Omega.
\]
And 
\begin{equation}\label{z-x}
\begin{split}
&|z- \big((x_{1},x_{2}, \eta_{m_{0}}(x_{1},x_{2}))-y \big)|\\
& \leq |z-p| + 
|p- \big((x_{1},x_{2}, \eta_{m_{0}}(x_{1},x_{2}))-y \big)  |\\
& \leq 2C_{*}\varepsilon.
\end{split}
\end{equation}
From (\ref{cone_WLOG}), (\ref{z-x}), and $|y| \leq \varepsilon/ \tilde{C}$, we deduce 
\begin{equation}\notag
\begin{split}
&|z- (0,0, \eta_{m_{0}}(0,0))| \\
&\leq |z- \big((x_{1},x_{2}, \eta_{m_{0}}(x_{1},x_{2}))-y \big)| + | (x_{1},x_{2}, \eta_{m_{0}}(x_{1},x_{2}))  - (0,0, \eta_{m_{0}}(0,0))| +|y|\\
&\leq 2C_{*} \varepsilon +  4 C_{*} \varepsilon (1+ || \eta_{m_{0}}||_{C^{1}(\mathcal{A}_{m_{0}})})  + \varepsilon/ \tilde{C}.
\end{split} 
\end{equation}
 
 Denote $(\bar{z}_{1}, \bar{z}_{2}) = (\bar{p}_{1}, \bar{p}_{2})$. By the definition of $t_{\mathbf{b}}$ and $t_{\mathbf{f}}$ in (\ref{backexit}) and (\ref{forwardexit}) 
\begin{equation}\label{z_b}
x_{\mathbf{b}}(z, \cos \bar{\theta} \hat{x}_{1,m,i,j,\varepsilon} + \sin\bar{\theta} \hat{x}_{2,m,i,j,\varepsilon}+0 n_{m,i,j,\varepsilon} ) = (\bar{z}_{1}, \bar{z}_{2},  {\eta_{m_0}}  (\bar{z}_{1}, \bar{z}_{2})).
\end{equation}

On the other hand, by the definition of $\Theta_{m,i,j,\varepsilon ,C_{*} \varepsilon, \ell}$ in (\ref{Theta}), 
\begin{equation}\label{v_mod_v}
\frac{v}{|v|}= \cos\theta_{v} \cos \phi_{v} \hat{x}_{1,m,i,j,\varepsilon} + \sin\theta_{v} \cos\phi_{v} \hat{x}_{2,m,i,j,\varepsilon} + \sin\phi_{v} n_{m,i,j,\varepsilon}, \    \text{ with } \ |\theta_{v} - \varepsilon \ell| < C_{*} \varepsilon,
\end{equation}
and
\begin{equation}\notag
\begin{split}
|v\cdot n_{m,i,j,\varepsilon}| &< 8 C_{\eta} C_{*} \varepsilon, \   \ \text{for} \ \  \varepsilon^{1/3} \leq |v| \leq 1,\\
\big| \frac{v}{|v|}\cdot n_{m,i,j,\varepsilon}\big| & < 8 C_{\eta} C_{*} \varepsilon,  \ \   \text{for} \ \ 1 \leq|v|.
\end{split}
\end{equation} 

Therefore, for $0< \varepsilon \ll 1$,
\begin{equation}\label{v_n_1}
\big| \frac{v}{|v|}\cdot n_{m,i,j,\varepsilon} \big| =  |\sin\phi_{v}| < \max\big\{    {8C_{\eta} C_{*}}  \varepsilon^{2/3},  8 C_{\eta} C_{*} \varepsilon\big\} \leq  {16 C_{\eta} C_{*}}  \varepsilon^{2/3}.
\end{equation}

Now we estimate as
\begin{equation}\notag
\begin{split}
&n_{m_{0}}(0,0)\cdot (\cos \bar{\theta} \hat{x}_{1,m,i,j,\varepsilon} + \sin \bar{\theta} \hat{x}_{2,m,i,j,\varepsilon} +0 n_{m,i,j,\varepsilon}  )\\
\leq& \ n_{m_{0}}(0,0) \cdot \frac{v}{|v|} 
+  n_{m}(0,0)  
 \cdot \underbrace{\big(\frac{v}{|v|}-  (\cos \bar{\theta} \hat{x}_{1,m,i,j,\varepsilon} + \sin \bar{\theta} \hat{x}_{2,m,i,j,\varepsilon} +0 n_{m,i,j,\varepsilon}  )\big)}_{\mathbf{(a)}}.
\end{split}
\end{equation}

We use (\ref{v_mod_v}), (\ref{v_n_1}), and $\bar{\theta} \in (\varepsilon \ell - C_{*} \varepsilon, \varepsilon \ell + C_{*} \varepsilon)$ to conclude that, for $0 < \varepsilon\ll 1$,
\begin{eqnarray*}
\mathbf{(a)} &\leq& 2 \big\{ | \cos \theta_{v} - \cos \bar{\theta}| + |\cos\theta_{v} ||\cos \phi_{v} -1| 
+ | \sin \theta_{v} - \sin \bar{\theta}| + |\sin \theta_{v}||\cos \phi_{v} -1| 
  + |\sin \phi_{v}|  \big\}\\
  &\leq&  
  2\{
  4C_{*} \e + 16 C_{\eta} C_{*} \e^{2/3} + 2(16)^{2} C_{\eta}^{2} C_{*}^{2} \e^{4/3} 
  \}
  \\
  &\leq&  {200 C_{\eta} C_{*}} \varepsilon^{2/3}.
\end{eqnarray*}
Finally from (\ref{C2_bound}), for $0< \varepsilon \ll 1,$
\begin{equation}\label{n_0_u}
\begin{split}
 -1 & \  \leq \ n_{m_{0}}(0,0)\cdot (\cos \bar{\theta} \hat{x}_{1,m,i,j,\varepsilon} + \sin \bar{\theta} \hat{x}_{2,m,i,j,\varepsilon} +0 n_{m,i,j,\varepsilon}  )\\
 &  \ \leq  \ 
 - s_{*} \times  C_{2} \sqrt{\varepsilon} +  {400 C_{\eta} C_{*}}  \varepsilon^{2/3}\\
 &  \ \leq  \ 
  -\frac{s_{*}C_{2}}{2}\sqrt{\varepsilon}.
 \end{split}
\end{equation}

Now we are ready to prove the first claim (\ref{same_m_0}). Denote
\[
\hat{u} := \cos\bar{\theta} \hat{x}_{1,m,i,j,\varepsilon} + \sin \bar{\theta} \hat{x}_{2,m,i,j,\varepsilon}.
\]
Recall that $|z|\leq (2C_{*} + 4C_{*} [1+ || \eta_{m_{0}}||_{C^{1} (\mathcal{A}_{m_{0}})}] + 1/ \tilde{C} )\varepsilon$ and $z\in \Omega.$ Therefore for $0< \varepsilon \ll 1$ the function $\eta_{m_{0}}$ is defined around $(z_{1},z_{2})$ and $z_{3} >\eta_{m_{0}}(z_{1},z_{2}).$

We define, for $|\tau| \ll1$, 
\begin{equation}
\Phi(\tau) =   z_{3} - \hat{u}_{3} \tau - \eta_{m_{0}}(z_{1}-\hat{u}_{1} \tau, z_{2} - \hat{u}_{2} \tau).
\end{equation}
Clearly $\Phi(0) >0.$ Expanding $\Phi(\tau)$ in $\tau$, from $ - \hat{u}_{3}=n_{m_{0}}(0,0)\cdot (\cos \bar{\theta} \hat{x}_{1,m,i,j,\varepsilon} + \sin \bar{\theta} \hat{x}_{2,m,i,j,\varepsilon}   )$, and (\ref{n_0_u}), we have
\begin{equation}\notag
\begin{split}
\Phi(\tau) & \leq - \hat{u}_{3} \tau + |z_{3} | + |\eta_{m_{0}}(z_{1}-\hat{u}_{1} \tau, z_{2} -\hat{u}_{2} \tau)|\\
&\leq 
-s_{*} \times\frac{C_{2}}{2}\sqrt{\varepsilon} \tau \\
& \ \ + (2C_{*} + 4C_{*} [1+ || \eta_{m_{0}}||_{C^{1} (\mathcal{A}_{m_{0}})}] + 1/ \tilde{C}  )\varepsilon\\
& \  \ + || \eta_{m_{0}} ||_{C^{2}(\mathcal{A}_{m_{0}})}(2C_{*} + 4C_{*} [1+ || \eta_{m_{0}}||_{C^{1} (\mathcal{A}_{m_{0}})}] + 1/ \tilde{C}  )^{2}\varepsilon^{2}\\
& \ \ + || \eta_{m_{0}} ||_{C^{2}(\mathcal{A}_{m_{0}})} |\tau|^{2},
\end{split}
\end{equation}
where we have used
\begin{equation}\notag
\begin{split}
& \eta_{m_{0}}(z_{1}  -\hat{u}_{1} \tau,  z_{2} - \hat{u}_{2} \tau)\\
   =&  \ \eta_{m_{0}}(z_{1}   , z_{2}   )  +\int_{0}^{\tau} \frac{d}{ds} \eta_{m_{0}} (z_{1}  - \hat{u}_{1}s, z_{2}  - \hat{u}_{2} s) \mathrm{d}s\\\
 =&   \ \eta_{m_{0}}(z_{1}  , z_{2}   ) -    (\hat{u}_{1},\hat{u}_{2}) \cdot \nabla \eta_{m_{0}} (z_{1} , z_{2}   )   {\tau} + \int_{0}^{\tau} \int_{0}^{s}  \frac{d}{d s_{1}^{ \ 2}} \eta_{m} ( z_{1}  -\hat{u}_{1} s_{1}, z_{2} - \hat{u}_{2} s_{1}   )  \mathrm{d}s_{1}  \mathrm{d}s  \\
\leq & \  || \eta_{m_{0}} ||_{C^{2}(\mathcal{A}_{m_{0}})} \frac{|z|^{2}}{2} 
- (\hat{u}_{1}, \hat{u}_{2})\cdot \nabla \eta_{m_{0}}(0,0) | \tau|
+ || \eta_{m_{0}}||_{C^{2}(\mathcal{A}_{m_{0}})} |z||\tau|
+ || \eta_{m_{0}} ||_{C^{2} (\mathcal{A}_{m_{0}})} \frac{|\tau|^{2}}{2}\\
\leq & \   {|| \eta_{m_{0}}||_{C^{2}(\mathcal{A}_{m_{0}})}}   \Big( |z|^{2}+|\tau|^{2}\Big) 
.
\end{split}
\end{equation}
Now we plug $\tau= \frac{1}{s_{*}}\times C_{3}\sqrt{\varepsilon}$ with the constant $C_{3}$ in (\ref{C3}) to have, for $s_{*}\gg 1$ and $0< \varepsilon \ll1,$
\begin{equation}\notag
\begin{split}
\Phi(\tau) & \leq - \Big[\frac{C_{2} C_{3}}{2} - \big(2C_{*} + 4C_{*} [1+ || \eta_{m_{0}}||_{C^{1} (\mathcal{A}_{m_{0}})}] + 1/ \tilde{C}  \big)  -  \frac{ || \eta_{m_{0}} ||_{C^{2}(\mathcal{A}_{m_{0}})}  C_{3}^{2}  }{(s_{*})^{2}} \Big] \varepsilon + O(\varepsilon^{2})\\
& <0.
\end{split}
\end{equation}
By the mean value theorem, there exists at least one $\tau\in (0, C_{3}\sqrt{\varepsilon}]$ satisfying $\Phi(\tau)=0.$ We choose the smallest one of them and denote it as $\tau_{0} \in(0, C_{3}\sqrt{\varepsilon}].$ By this definition and (\ref{z_b}), for $0< \varepsilon \ll 1,$
\begin{equation}\notag
\begin{split}
x_{\mathbf{b}}(z, \hat{u}) &= x_{\mathbf{b}}(z, \cos\bar{\theta} \hat{x}_{1,m,i,j,\varepsilon} + \sin\bar{\theta} \hat{x}_{2,m,i,j,\varepsilon})\\
&= z- \tau_{0} \hat{u}\\
&= \big( z_{1} -\tau_{0}  \hat{u}_{1} , z_{2} - \tau_{0} \hat{u}_{2}  , z_{3} -\tau_{0} \hat{u}_{3}  \big).
\end{split}
\end{equation}
 Therefore, $x_{\mathbf{b}}(z, \hat{u})\in \partial\Omega \cap \mathcal{U}_{m_{0}}$ and this proves (\ref{same_m_0}).

For $0< \varepsilon \ll 1$
\begin{equation}\notag
\begin{split}
|\big( z_{1} -\tau_{0}  \hat{u}_{1} , z_{2} - \tau_{0} \hat{u}_{2}    \big)|\leq &
\big(  2C_{*} + 4C_{*} (1+ || \eta_{m_{0}} ||_{C^{1} (\mathcal{A}_{m})} +1/\tilde{C}  ) \big)\varepsilon + C_{3}\sqrt{\varepsilon}\\
\leq &2{C_{3}}  \sqrt{\varepsilon}.
\end{split}
\end{equation}
Moreover, 
\[
\big( z_{1} -\tau_{0}  \hat{u}_{1} , z_{2} - \tau_{0} \hat{u}_{2}    \big) \in \mathcal{R}_{m_{0}, i,j, \varepsilon, C_{*}\varepsilon},
\]
for
\[
|i-i_{0}|, |j-j_{0}| \ \leq \   (2{C_{3}} \sqrt{\varepsilon}) / \varepsilon \ \leq \  2{C_{3}}   \frac{1}{\sqrt{\varepsilon}} \leq N_{1}.
\]

We only need to prove (\ref{C4}). From (\ref{v_n_1}) and (\ref{C3})
\begin{equation}\notag
\begin{split}
\big|  n_{m_{0}}(0,0)\cdot\frac{v}{|v|}\big| & \leq \  \big|  n_{m_{0},i,j,\varepsilon, C_{*} \varepsilon}  \cdot\frac{v}{|v|}\big|  +  \big|  (n_{m_{0}}(0,0) - n_{m_{0},i,j,\varepsilon, C_{*} \varepsilon})\cdot\frac{v}{|v|}\big| \\
& \leq  \ {16 C_{\eta} C_{*}}  \varepsilon^{2/3} + || n_{m_{0}} ||_{C^{1} (\mathcal{A}_{m_{0}})} | N_{1} \varepsilon + C_{*} \varepsilon |\\
& \leq \    {16 C_{\eta} C_{*}} \varepsilon^{2/3} + || n_{m_{0}} ||_{C^{1} (\mathcal{A}_{m_{0}})} \big\{ 2C_{3} \sqrt{\varepsilon} + C_{* } \varepsilon  \big\}\\
& \leq \ 10C_{3  }  (1+ || \eta_{m_{0}} ||_{C^{2} (\mathcal{A}_{m_{0}})} )  \sqrt \varepsilon\\
&\ \leq \ \ C_{4} \sqrt{\varepsilon}, 
\end{split}
\end{equation}
and (\ref{C4}) follows.
 \end{proof}

\begin{proof}[\textbf{Proof of Lemma \ref{small_boundary_lemma}}]

The first statement (\ref{S_inc_zero_boundary}) is clear from (\ref{S_inc_zero}). Once we assume (\ref{small_boundary}) then it is easy to prove (\ref{chi_W11bound_boundary1}), (\ref{chi_W11bound_boundary2}):

Firstly we prove (\ref{chi_W11bound_boundary1}). Due to properties of the standard mollifier (\ref{mollifier}), we obtain
\begin{equation}\notag
\begin{split}
&\iint_{x\in \partial\Omega, n(x)\cdot v <0}
\big[1- \chi_{\varepsilon} (x,v)\big]e^{-\theta |v|^{2}}
  |n(x)\cdot v|\mathrm{d}S_{x} \mathrm{d}v\\
=&  \  \iint_{x\in \partial\Omega, n(x)\cdot v <0}
\iint_{\mathbb{R}^{3} \times \mathbb{R}^{3}}
\big[ 1- \mathbf{1}_{\bar{\Omega} \times \mathbb{R}^{3} \backslash \mathcal{O}_{\varepsilon, C_{*} \varepsilon}} (x-y,v-u)  \big] \varphi_{\varepsilon} (y,u)
e^{-\theta |v|^{2}}
  \mathrm{d}u \mathrm{d}y |n(x)\cdot v|\mathrm{d}S_{x} \mathrm{d}v\\
  \leq & \ \iint_{\mathbb{R}^{3} \times \mathbb{R}^{3}} \varphi_{\varepsilon} ( y,  u) e^{\theta | u|^{2}}  \mathrm{d}u \mathrm{d}y
  \iint_{x\in\partial\Omega, n(x)\cdot v<0}
  \mathbf{1}_{\mathcal{O}_{\varepsilon, C_{*} \varepsilon}} (x-y,v-u) e^{-\frac{\theta }{2}|v-u|^{2} }  |n(x)\cdot v| \mathrm{d}S_{x} \mathrm{d}v\\
  = & \ 
  \iint_{B_{\mathbb{R}^{6}}(0; \varepsilon / \tilde{C})   } \varphi_{\varepsilon} ( y,  u) e^{ \frac{\theta}{2} | u|^{2}}  \mathrm{d}u \mathrm{d}y\\
  & \times
  \iint_{x\in\partial\Omega, n(x)\cdot v<0}
  \mathbf{1}_{\mathcal{O}_{\varepsilon, C_{*} \varepsilon}} (x-y,v-u) e^{-\frac{\theta }{4}|v-u|^{2} } e^{ -\frac{\theta}{2}|v|^{2}}  |n(x)\cdot v| \mathrm{d}S_{x} \mathrm{d}v,
  \end{split}
  \end{equation}
  where we used
  \begin{eqnarray*}
  -\theta |v|^{2} &\leq& -\frac{\theta}{2}|v|^{2} -\Big(\frac{\theta}{2}|v|^{2}  - \frac{\theta}{4} |v-u|^{2}\Big) - \frac{\theta}{4} |v-u|^{2}\\
  &\leq& -\frac{\theta}{2}|v|^{2} -
  \Big(
   \frac{\theta}{2} |v|^{2} -   \frac{\theta}{2} |v|^{2} -  \frac{\theta}{2} |u|^{2}
  \Big) 
  - \frac{\theta}{4} |v-u|^{2}\\
  &\leq& -\frac{\theta}{2}|v|^{2} + \frac{\theta}{2}|u|^{2}   - \frac{\theta}{4} |v-u|^{2}.
  \end{eqnarray*}
  Since $|y|+ |u| \leq \varepsilon / \tilde{C}$ and $n(x)\cdot v<0$, we have
  \begin{equation}\notag
  \begin{split}
  n(x)\cdot v &=  n(x)\cdot v - n(x-y)\cdot (v-u) + n(x-y)\cdot (v-u)\\
  & =   n(x-y)\cdot (v-u) +[n(x)- n(x-y)] \cdot v + n(x-y)\cdot u\\
  &=    n(x-y)\cdot (v-u) + O(\frac{\varepsilon}{\tilde{C}})( 1+|v|). 
  \end{split}
  \end{equation}
  Therefore, we use (\ref{small_boundary}) to bound (\ref{chi_W11bound_boundary1}) further as
  \begin{equation}\notag
  \begin{split}
(\ref{chi_W11bound_boundary1})  \leq \ \ &   \iint_{B_{\mathbb{R}^{6}}(0; \varepsilon / \tilde{C})   } \varphi_{\varepsilon} ( y,  u) e^{ \frac{\theta}{2} | u|^{2}}  \mathrm{d}u \mathrm{d}y \\
  & \times 
  \iint_{x\in\partial\Omega, n(x)\cdot v<0}
  \mathbf{1}_{\mathcal{O}_{\varepsilon, C_{*} \varepsilon}} (x-y,v-u) e^{-\frac{\theta }{4}|v-u|^{2} } e^{ -\frac{\theta}{2}|v|^{2}}  |n(x-y)\cdot (v-u)| \mathrm{d}S_{x} \mathrm{d}v\\
   + & \ O(\frac{\varepsilon}{\tilde{C}}) e^{ \frac{\theta\varepsilon^{2}}{2\tilde{C}^{2}}} \times \mathrm{m}_{3} (\partial\Omega) \times \int_{\mathbb{R}^{3}} (1+|v|) e^{-\frac{\theta}{2}|v|^{2}} \mathrm{d}v\\
   \lesssim_{\Omega} & \ \ \varepsilon \times e^{\frac{\theta \varepsilon^{2}}{2 (\tilde{C})^{2}} }\\
    \lesssim_{\Omega} &  \ \  \varepsilon.
  \end{split}
  \end{equation}
 
Secondly we prove (\ref{chi_W11bound_boundary2}). Following the same proof of (\ref{chi_W11bound_boundary1}) , we deduce
\begin{equation}\notag
\begin{split}
&\Big|\iint_{x\in \partial\Omega, n(x)\cdot v <0}
\partial  \chi_{\varepsilon} (x,v) e^{-\theta |v|^{2}}
  |n(x)\cdot v|\mathrm{d}S_{x} \mathrm{d}v\Big|\\
  =& \ \Big|\iint_{x\in \partial\Omega, n(x)\cdot v <0}
\partial \big[ \chi_{\varepsilon} (x,v) -1\big]e^{-\theta |v|^{2}}
  |n(x)\cdot v|\mathrm{d}S_{x} \mathrm{d}v\Big| \\
=& \ 
\Big|  \iint_{x\in \partial\Omega, n(x)\cdot v <0}
\partial \Big[
\iint_{\mathbb{R}^{3} \times \mathbb{R}^{3}}
  \mathbf{1}_{  \mathcal{O}_{\varepsilon, C_{*} \varepsilon}} (y,u)  \varphi_{\varepsilon} (x-y,v-u)
  \mathrm{d}u \mathrm{d}y 
\Big]
e^{-\theta |v|^{2}} |n(x)\cdot v| \mathrm{d}S_{x} \mathrm{d}v
\Big|
\\  
\leq& \  \Big| \iint_{x\in \partial\Omega, n(x)\cdot v <0}
\iint_{\mathbb{R}^{3} \times \mathbb{R}^{3}}
  \mathbf{1}_{  \mathcal{O}_{\varepsilon, C_{*} \varepsilon}} (x-y,v-u)   |\partial \varphi_{\varepsilon} (y,u)|
  \mathrm{d}u \mathrm{d}y  |n(x)\cdot v| e^{-\theta |v|^{2}}\mathrm{d}S_{x} \mathrm{d}v\Big|\\
  =& \ 
  \iint_{B_{\mathbb{R}^{6}}(0; \varepsilon / \tilde{C})   } | \partial \varphi_{\varepsilon} ( y,  u) | e^{ \frac{\theta}{2} | u|^{2}}  \mathrm{d}u \mathrm{d}y \\
  & \times 
  \iint_{x\in\partial\Omega, n(x)\cdot v<0}
  \mathbf{1}_{\mathcal{O}_{\varepsilon, C_{*} \varepsilon}} (x-y,v-u) e^{-\frac{\theta }{4}|v-u|^{2} } e^{ -\frac{\theta}{2}|v|^{2}}  |n(x)\cdot v| \mathrm{d}S_{x} \mathrm{d}v\\
\lesssim& \  \sup_{(y,u) \in B_{\mathbb{R}^{6}}(0; \varepsilon / \tilde{C})  }
  \iint_{x\in\partial\Omega, n(x)\cdot v<0}
  \mathbf{1}_{\mathcal{O}_{\varepsilon, C_{*} \varepsilon}} (x-y,v-u) e^{-\frac{\theta }{4}|v-u|^{2} } e^{ -\frac{\theta}{2}|v|^{2}}  (1+|v|) \mathrm{d}S_{x} \mathrm{d}v\\
  +  & O(\frac{1}{\varepsilon}) \sup_{(y,u) \in B_{\mathbb{R}^{6}}(0; \varepsilon / \tilde{C})  }
    \iint_{x\in\partial\Omega, n(x)\cdot v<0}
  \mathbf{1}_{\mathcal{O}_{\varepsilon, C_{*} \varepsilon}} (x-y,v-u) e^{-\frac{\theta }{4}|v-u|^{2} } e^{ -\frac{\theta}{2}|v|^{2}}   |n(x-y)\cdot (v-u)|\mathrm{d}S_{x} \mathrm{d}v\\
 \lesssim  & \ \ 1. 
 \end{split}
\end{equation}

 \vspace{8pt}
 
 \noindent\textit{Proof of (\ref{small_boundary}).} Let $|(y,u)|\leq \varepsilon/ \tilde{C}.$ We use (\ref{boundary_decompose}) to decompose
 \begin{equation}\notag
 \begin{split}
 (\ref{small_boundary})  & \leq \sum_{m=1}^{M_{\Omega, \delta}} \int_{\mathcal{U}_{m} \cap \partial\Omega} \int_{n_{m}(x)\cdot v<0} 
 \mathbf{1}_{\mathcal{O}_{\varepsilon, C_{*} \varepsilon}} (x-y, v-u) e^{-\theta |v-u|^{2}}  e^{-\frac{\theta}{2}|v|^{2}} |n_{m}(x-y)\cdot (v-u)|
 \mathrm{d}v \mathrm{d}S_{x}\\
 & \leq M_{\Omega, \delta} \times  \sup_{m}
\int_{\mathcal{U}_{m} \cap \partial\Omega} \int_{n_{m}(x)\cdot v<0} 
 \mathbf{1}_{\mathcal{O}_{\varepsilon, C_{*} \varepsilon}} (x-y, v-u) e^{-\theta |v-u|^{2}} e^{-\frac{\theta}{2}|v|^{2}} |n_{m}(x-y)\cdot (v-u)|
 \mathrm{d}v \mathrm{d}S_{x}\\
 & \lesssim_{\Omega} \frac{1}{\delta^{2}}  \sup_{m}
\int_{\mathcal{U}_{m} \cap \partial\Omega} \int_{n_{m}(x)\cdot v<0} 
 \mathbf{1}_{\mathcal{O}_{\varepsilon, C_{*} \varepsilon}} (x-y, v-u) e^{-\theta |v-u|^{2}} e^{-\frac{\theta}{2}|v|^{2}} |n_{m}(x-y)\cdot (v-u)|
 \mathrm{d}v \mathrm{d}S_{x}.
\end{split}
 \end{equation}
For fixed $m=1,2,\cdots, M_{\Omega, \delta},$ we use (\ref{Um}) and (\ref{A_R}) again to decompose
\begin{equation}\notag
\begin{split}
&\int_{\mathcal{U}_{m} \cap \partial\Omega} \int_{n_{m}(x)\cdot v<0} 
 \mathbf{1}_{\mathcal{O}_{\varepsilon, C_{*} \varepsilon}} (x-y, v-u) e^{-\theta |v-u|^{2}} e^{-\frac{\theta}{2}|v|^{2}} |n_{m}(x-y)\cdot (v-u)|
 \mathrm{d}v \mathrm{d}S_{x}\\
 = &\int_{\mathcal{A}_{m}} \int_{n_{m}(x_{1},x_{2})\cdot v <0} 
 \mathbf{1}_{\mathcal{O}_{\varepsilon, C_{*} \varepsilon}} (x_{1}-y_{1},x_{2}-y_{2}, \eta_{m}(x_{1}, x_{2}) -y_{3}, v-u) \\
 & \ \ \ \ \ \ \ \ \ \ \ \ \ \ \ \ \ \ \ \ \ \ \ \ 
\times e^{-\theta |v-u|^{2}} e^{-\frac{\theta}{2}|v|^{2}} |n_{m}(x-y)\cdot (v-u)|
 \mathrm{d}v
 \sqrt{1+ | \nabla \eta_{m} (x_{1}, x_{2})|^{2}} \mathrm{d}x_{1} \mathrm{d}x_{2}\\
\leq & \sum_{- N_{\varepsilon} \leq i,j \leq N_{\varepsilon}}
\int_{\mathcal{R}_{m,i,j,\varepsilon, C_{*} \varepsilon}} \int_{n_{m}(x_{1},x_{2})\cdot v <0} 
 \mathbf{1}_{\mathcal{O}_{\varepsilon, C_{*} \varepsilon}} (x_{1}-y_{1},x_{2}-y_{2}, \eta_{m}(x_{1}, x_{2}) -y_{3}, v-u) \\
 &  \ \ \ \ \ \ \ \ \ \ \ \ \ \ \ \ \ \ \ \ \ \ \ \ 
\times e^{-\theta |v-u|^{2}} e^{-\frac{\theta}{2}|v|^{2}} |n_{m}(x-y)\cdot (v-u)|
 \mathrm{d}v
 \sqrt{1+ | \nabla \eta_{m} (x_{1}, x_{2})|^{2}} \mathrm{d}x_{1} \mathrm{d}x_{2}\\
\lesssim & \  \frac{\delta^{2}}{\varepsilon^{2}}\sup_{{- N_{\varepsilon} \leq i,j \leq N_{\varepsilon}}}\int_{\mathcal{R}_{m,i,j,\varepsilon, C_{*} \varepsilon}} \int_{n_{m}(x_{1},x_{2})\cdot v <0} 
 \mathbf{1}_{\mathcal{O}_{\varepsilon, C_{*} \varepsilon}} (x_{1}-y_{1},x_{2}-y_{2}, \eta_{m}(x_{1}, x_{2}) -y_{3}, v-u) \\
 &   \ \ \ \ \ \ \ \ \ \ \ \ \ \ \ \ \ \ \ \ \ \ \ \ 
\times e^{-\theta |v-u|^{2}} e^{-\frac{\theta}{2}|v|^{2}} |n_{m}(x-y)\cdot (v-u)|
 \mathrm{d}v
 \sqrt{1+ | \nabla \eta_{m} (x_{1}, x_{2})|^{2}} \mathrm{d}x_{1} \mathrm{d}x_{2},\\
\end{split}
\end{equation} 
where $n_{m}(x_{1},x_{2}) = \frac{1}{\sqrt{1+ |\partial_{1} \eta_{m}(x_{1},x_{2})|^{2} + |\partial_{1} \eta_{m}(x_{1},x_{2})|^{2}  }}
\big( \partial_{1} \eta_{m}(x_{1},x_{2}), \partial_{2} \eta_{m}(x_{1}, x_{2}), -1  \big).$

We fix $i,j$. Without loss of generality (up to rotations and translations), we may assume
\[
c_{m,i,j,\varepsilon} = (0,0), \ \ \partial_{1} \eta_{m}(0,0) = 0 = \partial_{2} \eta_{m}(0,0), \ \ n_{m,i,j,\varepsilon} = (0,0,-1).
\]
We claim
\begin{equation}\label{claim_small_boundary}
\begin{split}
&\int_{[-C_{*}\varepsilon, C_{*} \varepsilon]^{2}} 
\int_{n_{m}(x_{1},x_{2})\cdot (v+u)<0} \mathbf{1}_{\mathcal{O}_{\varepsilon, C_{*} \varepsilon}}(x_{1}-y_{1}, x_{2} - y_{2}, \eta_{m}(x_{1},x_{2}) -y_{3}, v )
 \\
 &
\ \ \ \ \ \ \ \ \ \ \ \ \ \ \ \ \ \ \ \ \ \ \ \ 
\times e^{-\theta |v |^{2}} e^{-\frac{\theta}{2}|v+u|^{2}} |n_{m}(x-y)\cdot v|
 \mathrm{d}v
 \sqrt{1+ | \nabla \eta_{m} (x_{1}, x_{2})|^{2}} \mathrm{d}x_{1} \mathrm{d}x_{2} 
 \\
 &\lesssim \varepsilon^{3}.
\end{split}
\end{equation}
Once we prove (\ref{claim_small_boundary}), due to the above estimates for the decomposition, we conclude (\ref{small_boundary}) directly.

For $(x_{1},x_{2}) \in [-C_{*} \varepsilon, C_{*} \varepsilon]^{2}, \ |(y,u)| < \varepsilon / \tilde{C}$, and $n_{m}(x_{1},x_{2})\cdot (v+u) <0,$ we deduce 
\begin{equation}\label{bdry_step1}
\begin{split}
n_{m,i,j,\varepsilon} \cdot  v& = n_{m}(0,0)\cdot v\\
&= n_{m}(x_{1},x_{2}) \cdot (v+u) + \big[ n_{m}(0,0)\cdot v -n_{m}(x_{1},x_{2}) \cdot ( v+u)\big]\\
&<0 + | n_{m}(x_{1},x_{2}) | |u| +  |n_{m}(0,0) - n_{m}(x_{1},x_{2})||v|\\
&\leq   {\varepsilon}/{\tilde{C}} + 2 C_{*} \varepsilon || \eta_{m}||_{C^{2}  ([-C_{*} \varepsilon, C_{*} \varepsilon]^{2}) }  |v|\\
& \leq C_{5}(1+|v|)\varepsilon,
\end{split}
\end{equation}
where $C_{5}= \max\big\{ 1/\tilde{C}, 2C_{*}|| \eta_{m}||_{C^{2}  ([-C_{*} \varepsilon, C_{*} \varepsilon]^{2}) }   \big\}$. Therefore 
\[
(\ref{claim_small_boundary}) \ \leq  \ \int_{[-C_{*} \varepsilon, C_{*} \varepsilon]^{2}}
\int_{n_{m,i,j,\varepsilon} \cdot v< C_{5} (1+|v|)\varepsilon} \cdots.
\]

According to Lemma \ref{cone_lemma}, we decompose 
\begin{equation}\label{claim_small_boundary_split}
\begin{split}
&\int_{[-C_{*}\varepsilon, C_{*} \varepsilon]^{2}} 
\int_{n_{m}(0,0) \cdot v \leq C_{5} (1+|v|)\varepsilon} \mathbf{1}_{\mathcal{O}_{\varepsilon, C_{*} \varepsilon}}(x_{1}-y_{1}, x_{2} - y_{2}, \eta_{m}(x_{1},x_{2}) -y_{3}, v )
 \\
 &
\ \ \ \ \ \ \ \ \ \ \ \ \ \ \ \ \ \ \ \ \ \ \ \ 
\times e^{-\theta |v |^{2}} e^{-\frac{\theta}{2}|v+u|^{2}} |n_{m}(x-y)\cdot  v |
 \mathrm{d}v
 \sqrt{1+ | \nabla \eta_{m} (x_{1}, x_{2})|^{2}} \mathrm{d}x_{1} \mathrm{d}x_{2} \\
 =& \underbrace{ \int_{[-C_{*}\varepsilon, C_{*} \varepsilon]^{2}} 
\int_{\big\{ { -s_{*}C_{2} \sqrt{\varepsilon}  \leq n_{m}(0,0)\cdot \frac{v}{|v|}\leq  C_{5}\frac{1+|v|}{|v|}\varepsilon } \big\} } \cdots  }_{\mathbf{(I)}}\\
&
 +   \underbrace{
\int_{[-C_{*}\varepsilon, C_{*} \varepsilon]^{2}} 
\int_{  \left\{   -1 \leq n_{m}(0,0)\cdot \frac{v}{|v|} \leq -s_{*}C_{2} \sqrt{\varepsilon}   \right\}}  \cdots 
}_{\mathbf{(II)}}
.
\end{split}
\end{equation}
First we consider $\mathbf{(I)}$. If $-s_{*}C_{2} \sqrt{\varepsilon}\leq n_{m}(0,0)\cdot \frac{v}{|v|} \leq 0$ then $0 \leq v_{3} = - n_{m}(0,0)\cdot v \leq s_* C_2|v| \sqrt{\varepsilon}$ and for $0< \varepsilon \ll 1$ 
\[
0  \ \leq  \ v_{3}  \ \leq \ 2 s_* C_2 \sqrt{|v_{1}|^{2} + |v_{2}|^{2}} \sqrt{\varepsilon}.
\]
Moreover
\begin{eqnarray*}
|n_{m}(x-y)\cdot v|& \leq& |n_{m}(0,0)\cdot v | +||  n_{m} ||_{C^{1} ( [-C_{*}\varepsilon, C_{*} \varepsilon]^{2} )} (C_{*} + {1}/{\tilde{C}}) |v|\varepsilon\\
& \leq& s_* C_2|v| \sqrt{\varepsilon}+4|| \eta_{m} ||_{C^{2} ( [-C_{*}\varepsilon, C_{*} \varepsilon]^{2} )} (C_{*} +  {1}/{\tilde{C}})|v| \varepsilon.
\end{eqnarray*}
If $n_{m}(0,0) \cdot \frac{v}{|v|} \leq C_{5} \frac{1+ |v|}{|v|}\varepsilon$ then for $0 < \varepsilon \ll 1$
\[
|v_{3} |=|n_{m}(0,0) \cdot v| \leq 2C_{5} (1+ \sqrt{ |v_{1}|^{2}+|v_{2}|^{2} }) \varepsilon.
\]
Therefore,
\begin{equation}\label{mathbf_I}
\begin{split}
\mathbf{(I)} &= \int_{[-C_{*}\varepsilon, C_{*} \varepsilon]^{2}}  
\int_{0 \leq v_{3} \leq 2s_* C_2 \sqrt{ |v_{1}|^{2} + |v_{2}|^{2}  } \sqrt{\varepsilon}  } 
 e^{-\theta|v|^{2}} \big\{ s_* C_2|v| \sqrt{\varepsilon}+4|| \eta_{m} ||_{C^{2} ( [-C_{*}\varepsilon, C_{*} \varepsilon]^{2} )} (C_{*} +  {1}/{\tilde{C}})|v| \varepsilon   \big\}
\\  & \ + 
\int_{[-C_{*}\varepsilon, C_{*} \varepsilon]^{2}} 
 \int_{ |v_{3}| \leq 2 C_{5} (1+ \sqrt{|v_{1}|^{2}  + |v_{2}|^{2} }) \varepsilon }  e^{-\theta |v|^{2}} \\
&  \lesssim  \mathrm{m}_{2} ( [-C_{*}\varepsilon, C_{*} \varepsilon]^{2}  ) \times \sqrt{\varepsilon} \iint_{\mathbb{R}^{2}} \mathrm{d}v_{1} \mathrm{d}v_{2} \ e^{- \frac{\theta}{2}|v_{1}|^{2}}e^{- \frac{\theta}{2}|v_{2}|^{2}} \int_{0}^{2s_* C_2 \sqrt{|v_{1}|^{2} + |v_{2}|^{2} } \sqrt{\varepsilon}} \mathrm{d}v_{3}\\
&  \ +  \mathrm{m}_{2}  ( [-C_{*}\varepsilon, C_{*} \varepsilon]^{2}  ) \times\iint_{\mathbb{R}^{2}} \mathrm{d}v_{1} \mathrm{d}v_{2} \ e^{-  {\theta} |v_{1}|^{2}}e^{-  {\theta} |v_{2}|^{2}} \int_{0}^{2C_{5} (1+\sqrt{|v_{1}|^{2} + |v_{2}|^{2} }) {\varepsilon}} \mathrm{d}v_{3}\\
& \lesssim \varepsilon^{3}.
\end{split}
\end{equation}

We decompose $\mathbf{(II)}$, according to Lemma \ref{cone_lemma}:
\begin{equation}\notag
\mathbf{(II)} = \int_{[-C_{*}\varepsilon, C_{*} \varepsilon]^{2}} \int_{|v|< \varepsilon^{1/3}} + \int_{[-C_{*}\varepsilon, C_{*} \varepsilon]^{2}} 
\int_{  \left\{   -1 \leq n_{m}(0,0)\cdot \frac{v}{|v|} \leq -s_* C_2 \sqrt{\varepsilon}      \ \text{and}  \ |v|\geq \varepsilon^{1/3} \right\}  } .
\end{equation}
The first term is clear bounded by $O(1) \varepsilon^{3}.$ For the second term we use (\ref{C4}) to have 
\begin{eqnarray*}
 &&\big\{   -1 \leq n_{m}(0,0)\cdot \frac{v}{|v|} \leq -s_* C_2 \sqrt{\varepsilon}      \ \text{and}  \ |v|\geq \varepsilon^{1/3} \big\}\\
 &   \subset &  \big\{   |n_{m}(0,0)\cdot \frac{v}{|v|} |\leq C_{4} \sqrt{\varepsilon}      \ \text{and}  \ |v|\geq \varepsilon^{1/3} \big\}.
\end{eqnarray*}
Therefore we follow the same proof for $(\ref{mathbf_I})$ to obtain 
\begin{eqnarray} 
\mathbf{(II)} & \lesssim& \varepsilon^{3} +  \int_{[-C_{*}\varepsilon, C_{*} \varepsilon]^{2}}    \int_{ | v_{3} |\leq 2C_{4} \sqrt{ |v_{1}|^{2} + |v_{2}|^{2}  } \sqrt{\varepsilon}  } \notag \\
&&   \ \ \ \ \ \ \ \ \ \ \ \ \ \ \ \ \ \ \ \ \ \ \ \times 
 e^{-\theta|v|^{2}} \big\{ C_{4}|v| \sqrt{\varepsilon}+4|| \eta_{m} ||_{C^{2} ( [-C_{*}\varepsilon, C_{*} \varepsilon]^{2} )} (C_{*} +  {1}/{\tilde{C}})|v| \varepsilon   \big\} \notag\\
& \lesssim& \varepsilon^{3}. \label{mathbf_II} 
\end{eqnarray}
 We conclude (\ref{claim_small_boundary_split}) from (\ref{mathbf_I}) and (\ref{mathbf_II}).  
\end{proof}

\section{Linear and Nonlinear Estimates}

We consider $f$ solving (\ref{transport}) and satisfying the in-flow boundary condition
\begin{equation}\label{inflow}
f(t,x,v)|_{\gamma_-} = g(t,x,v).
\end{equation}

Let $\{\tau_{1}(x), \tau_{2}(x)\}$ be bases of the tangent space at $x\in \partial\Omega$ (therefore $\{\tau_{1}(x), \tau_{2}(x), n(x)\}$ is orthonormal bases of $\mathbb{R}^{3}).$ Denote $\partial_{\tau_{i}}$ to be the (tangential) $\tau_{i}-$directional derivative and $\partial_{n}$ to be the normal derivative.

\begin{lemma}\label{inflow_linear}
Assume {$\mathcal{U}$ is an open subset of $\mathbb{R}^{3} \times \mathbb{R}^{3}$} such that $\mathfrak{S}_{\mathrm{B}} \subset \mathcal{U}$. Assume 
\begin{equation}\label{vanishing}
f_{0}(x,v) \equiv 0, \  g(t,x,v) \equiv 0, \  H(t,x,v) \equiv 0, \  \ \text{for   } \ \ (t,x,v)\in [0,T]\times\{\mathcal{U}\cap (\bar{\Omega}\times\R^3)\}.  
\end{equation}
{Assume further that} for $0<\theta< \frac{1}{4},$
\[
e^{\theta|v|^{2}} f_0 \in L^\infty(\Omega\times\mathbb{R}^3), \   e^{\theta|v|^{2}} g \in L^\infty([0,T]\times\gamma_-), \ e^{\theta|v|^{2}} H \in L^\infty([0,T]\times\Omega\times\mathbb{R}^3),
\]
and 
\begin{equation}\notag
\begin{split}
& \nabla_x f_0 , \ \nabla_v f_0  \in \ L^1(\Omega\times\mathbb{R}^3),\\
 &\partial_{\tau_{i}} g, \ \frac{1}{n(x)\cdot v} \Big\{-\partial_t g -\sum_{i} (v\cdot \tau_i) \partial_{\tau_i}g -\nu g + H\Big\}, \   \nabla_v g,
 e^{-\theta|v|^{2}} \nabla_{x} \nu, e^{-\theta|v|^{2}}\nabla_{v} \nu
   \ \in \ L^1([0,T]\times\gamma_-),\\
 &\nabla_x H, \ \nabla_v H, \ {e^{-\theta|v|^{2}}\nabla_{x} \nu, \ e^{-\theta|v|^{2}}\nabla_{v} \nu} \ \in \ L^1([0,T]\times \Omega\times\mathbb{R}^3).
\end{split}
\end{equation}
Then there exists a unique solution $f$ to the transport equation (\ref{transport}) with in-flow {boundary condition} (\ref{inflow}) such that $e^{\theta |v|^{2}}f \in C^{0} ([0,T] \times \bar{\Omega} \times \mathbb{R}^{3})$ and $\nabla_x f, \ \nabla_v f \in C^0([0,T];L^1(\Omega\times\mathbb{R}^3))$ and the traces satisfy
\begin{equation}\notag
\begin{split}
 &\nabla_ xf  = \nabla_ x g, \ \ \nabla_v f  = \nabla_v g , \ \ \text{on} \ \gamma_-,\\
&\nabla_x f(0,x,v) = \nabla_x f_0, \ \ \nabla_v f(0,x,v) = \nabla_v f_0 , \ \ \text{in} \ \Omega\times\mathbb{R}^3,
\end{split}
\end{equation}
where $\nabla_x g$  {is defined} by 
\begin{equation}\notag\label{gboundary}
\nabla_{x} g   = \sum_{i=1,2} \tau_{i} \partial_{\tau_{i}} g +  \frac{ n}{n\cdot v}  \big\{-\partial_t g -\sum_{i} (v\cdot \tau_i) \partial_{\tau_i}g -\nu g + H \big\}.
\end{equation}
Moreover
{ 
\begin{eqnarray}
&& ||\nabla _{x}f(t)||_1 +\int_{0}^{t}|\nabla _{x}f|_{\gamma
_{+},1 }+\int_{0}^{t}||\nu  \nabla _{x}f||_1 \nonumber\\
&&  \ \ \ \ \ \ \ \ \ \ \ \ \ \   =||\nabla _{x}f_{0}||_1 +\int_{0}^{t}|\nabla _{x}g|_{\gamma
_{-},1 } + \int_{0}^{t}\iint_{\Omega \times \mathbb{R}^{3}}  \mathrm{sgn}(\nabla_x f) \big\{\nabla
_{x}H - \nabla_{x} \nu f   \big\}, \ \ \label{global_x}   \\
&& ||\nabla _{v}f(t)||_1 +\int_{0}^{t}|\nabla _{v}f|_{\gamma
_{+},1 } + \int_{0}^{t}||\nu  \nabla _{v}f||_1   \notag
\\
&& \ \ \ \ \ \ \ \ \ \ \ \ \ \  =||\nabla _{v}f_{0}||_1 +\int_{0}^{t}|\nabla _{v}g|_{\gamma
_{-},1 } + \int_{0}^{t}\iint_{\Omega \times \mathbb{R}^{3}}\mathrm{sgn}(\nabla_v f)\big\{\nabla
_{v}H-\nabla _{x}f-\nabla _{v}\nu f\big\} .\notag \\
 \label{global_v}
\end{eqnarray}}
\end{lemma}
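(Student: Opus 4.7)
My plan is to construct $f$ explicitly along characteristics, differentiate the transport equation, and apply Lemma \ref{Green}. First I would write the representation
\begin{equation*}
f(t,x,v) = \mathbf{1}_{t<t_{\mathbf{b}}}\, e^{-\int_0^t \nu\,\mathrm{d}\sigma}\, f_0(x-tv,v) + \mathbf{1}_{t\ge t_{\mathbf{b}}}\, e^{-\int_{t-t_{\mathbf{b}}}^t \nu\,\mathrm{d}\sigma}\, g(t-t_{\mathbf{b}},x_{\mathbf{b}},v) + \int_{\max(0,t-t_{\mathbf{b}})}^{t} e^{-\int_s^t \nu\,\mathrm{d}\sigma}\, H(s,x-(t-s)v,v)\,\mathrm{d}s,
\end{equation*}
which gives uniqueness at once. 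By Lemma \ref{le:backward_derivatives}, $(t_{\mathbf{b}},x_{\mathbf{b}})$ is $C^1$ on the complement of $\mathfrak{S}_{\mathrm{B}}$, so $f$ is continuous and differentiable there. The only possible discontinuity of $f$ occurs when the characteristic transitions between the `$t<t_{\mathbf{b}}$' and `$t\ge t_{\mathbf{b}}$' branches of the formula, i.e.\ along $\mathfrak{S}_{\mathrm{B}}$; the vanishing hypothesis \eqref{vanishing} forces $g\equiv 0$ near $\gamma_{0}$ and $f_{0}\equiv 0$ near $\mathcal{U}\cap\partial\Omega$, so the two branches agree there and $e^{\theta|v|^{2}}f$ is continuous globally.

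Second, I would compute the boundary traces. On $\gamma_{-}$, $g$ satisfies $\partial_{t}g + v\cdot\nabla_{x}g + \nu g = H$ (inherited from \eqref{transport}); decomposing $\nabla_{x}g = \sum_{i}\tau_{i}\,\partial_{\tau_{i}}g + n\,\partial_{n}g$ and solving for $\partial_{n}g$ recovers the stated formula for $\nabla_{x}g|_{\gamma_{-}}$. The apparent singularity $1/(n\cdot v)$ is harmless because its numerator vanishes on the open set $\mathcal{U}\cap\gamma_{-}\supset\gamma_{0}$ by \eqref{vanishing}, so the expression is well defined in $L^{1}([0,T]\times\gamma_{-})$. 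Similarly $\nabla_{v}f|_{\gamma_{-}}=\nabla_{v}g\in L^{1}$, and $\nabla_{x,v}f|_{t=0}=\nabla_{x,v}f_{0}$ by direct differentiation of the Duhamel formula.

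Third, differentiating \eqref{transport} componentwise yields
\begin{equation*}
\{\partial_{t} + v\cdot\nabla_{x} + \nu\}\nabla_{x}f = \nabla_{x}H - (\nabla_{x}\nu)\, f,
\end{equation*}
\begin{equation*}
\{\partial_{t} + v\cdot\nabla_{x} + \nu\}\nabla_{v}f = \nabla_{v}H - \nabla_{x}f - (\nabla_{v}\nu)\, f,
\end{equation*}
both equations holding in $L^{1}([0,T]\times\Omega\times\mathbb{R}^{3})$ by the regularity obtained in Steps 1--2. Applying Lemma \ref{Green} with $p=1$ to each scalar component and summing with the convention $|F|^{-1}F=\mathrm{sgn}(F)$ directly produces \eqref{global_x}--\eqref{global_v}. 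The main obstacle is justifying that the distributional derivatives really satisfy these transport equations up to $\partial\Omega$ and admit $L^{1}$ traces on $\gamma_{\pm}$; this is exactly where the hypothesis $\mathfrak{S}_{\mathrm{B}}\subset\mathcal{U}$ together with \eqref{vanishing} plays its role, confining every potentially singular factor---the $1/(n\cdot v)$ in $\partial_{n}g$ and the blowing-up Jacobians of $(t_{\mathbf{b}},x_{\mathbf{b}})$ on $\mathfrak{S}_{\mathrm{B}}$---to a set where $f$, $g$, $H$, and all their derivatives vanish identically.
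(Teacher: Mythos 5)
Your proposal follows essentially the same route as the paper: write the Duhamel representation along characteristics, use the vanishing hypothesis on $\mathcal{U}\supset\mathfrak{S}_{\mathrm{B}}$ to neutralize the singular factors coming from $\nabla_{x,v}t_{\mathbf{b}}$ and $1/(n\cdot v)$, identify the boundary traces, and apply Green's identity in $L^{1}$. The one step the paper makes more explicit than you do is the crucial $L^{1}$ bound on $\nabla_{x,v}f$: the paper differentiates the Duhamel formula term by term to get the explicit expressions \eqref{compute_f_x}--\eqref{compute_f_v}, bounds their $L^{1}$ norms, and only then invokes Lemma \ref{le:ukai} to obtain the incoming traces that Lemma \ref{Green} requires, whereas you assume this integrability and gesture at where the hypothesis enters without carrying out the estimate.
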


\begin{proof}
We use the Duhamel formula of $f$:
\begin{equation}\label{duhamel_linear}
\begin{split}
f(t,x,v) =& \ \mathbf{1}_{\{t<t_{\mathbf{b}} (x,v)\}} e^{-\int^{t}_{0}\nu(t-\tau, x-\tau v,v) \mathrm{d}\tau }f_0(x-tv,v)\\
&  + \mathbf{1}_{\{t>t_{\mathbf{b}} (x,v) \}} e^{- \int^{t_{\mathbf{b}}(x,v)}_{0}\nu(t-\tau, x-\tau v,v)  \mathrm{d}\tau } g(t- t_{\mathbf{b}} (x,v), x_{\mathbf{b}} (x,v),v)\\
&  + \int_0^{\min\{t,t_{\mathbf{b}} (x,v)\}} e^{- \int_{0}^{s} \nu(t-\tau, x-\tau v,v) \mathrm{d}\tau}H(t-s,x-sv,v) \mathrm{d}s.
\end{split}
\end{equation}

Following Proposition 1 of \cite{GKTT}, we have, on $\{t\neq t_{\mathbf{b}}\}$

\begin{equation}\label{compute_f_x}
\begin{split}
&\nabla _{x}f(t,x,v)\1_{\{t\neq \tb\}}\\
 & =  \1_{\{t< \tb\}}e^{-\int^{t}_{0}\nu(t-\tau, x-\tau v,v) \mathrm{d}\tau}\left\{\nabla _{x}f_{0}( x-tv,v) -\left(\int^{t}_{0} \nabla_x\nu (t-\tau, x-\tau v,v) \mathrm{d}\tau\right) f_{0}( x-tv,v)\right\} \\
& \ + \1_{\{t>t_B\}}e^{-\int^{\tb}_{0}\nu(t-\tau, x-\tau v,v) \mathrm{d}\tau} \Big\{ \sum_{i=1}^2
 \tau_i \partial_{\tau_i} g - \frac{n(\xb)}{v\cdot n(x_B)} \big\{\partial_t g +  \sum_{i=1}^2 (v\cdot \tau_i) \partial_{\tau_i}g +\nu g -H \big\}
\Big\}(t-\tb,\xb,v) \\
&\ -  \1_{\{t>t_B\}} e^{-\int^{\tb}_{0}\nu(t-\tau, x-\tau v,v) \mathrm{d}\tau} \left(\int^{\tb}_{0}\nabla_x\nu (t-\tau, x-\tau v,v) \mathrm{d}\tau\right)g(t-\tb,\xb,v)\\
& \ + \int_{0}^{\min(t,t_B)} e^{-\int^{s}_{0}\nu(t-\tau, x-\tau v,v) \mathrm{d}\tau} \nabla_{x}H(t-s,x-vs,v)\ud s \\ &-  \int_{0}^{\min(t,t_B)} e^{-\int^{s}_{0}\nu(t-\tau, x-\tau v,v) \mathrm{d}\tau} \left(\int^{s}_{0}\nabla_x\nu (s-\tau, x-\tau v,v) \mathrm{d}\tau\right)H(t-s,x-vs,v) \ud s ,
\end{split}
\end{equation}
\begin{equation}\label{compute_f_v}
\begin{split}
& \nabla _{v}f(t,x,v) \1_{\{t\neq \tb\}} \\
 & =   \1_{\{t< \tb\}}e^{-\int^{t}_{0}\nu(t-\tau, x-\tau v,v) \mathrm{d}\tau}[-t\nabla_{x}f_{0}  + \nabla_{v}f_{0}]( x-tv,v) \\
 &\ - \1_{\{t< \tb\}}e^{-\int^{t}_{0}\nu(t-\tau, x-\tau v,v) \mathrm{d}\tau} \int^{t}_{0} \left\{-\tau\nabla_x\nu +\nabla_v\nu\right\}(t-\tau, x-\tau v,v) \mathrm{d}\tau f_{0}( x-tv,v) \\
& \ -
\1_{\{t>\tb\}} \tb e^{-\int^{\tb}_{0}\nu(t-\tau, x-\tau v,v) \mathrm{d}\tau} \Big\{ \sum_{i=1}^2
 \tau_i \partial_{\tau_i} g - \frac{n(\xb)}{v\cdot n(\xb)} \big\{\partial_t g +  \sum_{i=1}^2 (v\cdot \tau_i) \partial_{\tau_i}g +\nu g -H \big\}
\Big\}(t-\tb,\xb,v)
 \\
& \ + \1_{\{t>\tb\}}e^{-\int^{\tb}_{0}\nu(t-\tau, x-\tau v,v) \mathrm{d}\tau}\Big\{  \nabla_v g(t-\tb,\xb,v) \Big\}\\
&\
-  \1_{\{t>\tb\}}e^{-\int^{\tb}_{0}\nu(t-\tau, x-\tau v,v) \mathrm{d}\tau}\Big\{ \int^{\tb}_{0}\left\{-\tau \nabla_x\nu+\nabla_v\nu \right\}(t-\tau, x-\tau v,v) \mathrm{d}\tau\Big\} g(t-\tb,\xb,v)
 \\
& \ + \:\int_{0}^{\min(t,\tb)} e^{-\int^{s}_{0}\nu(t-\tau, x-\tau v,v) \mathrm{d}\tau}\{\nabla_{v}H -s\nabla_{x}H  \}(t-s,x-vs,v) \ud s \\
&\ -\:\int_{0}^{\min(t,\tb)} e^{-\int^{s}_{0}\nu(t-\tau, x-\tau v,v) \mathrm{d}\tau}\{\int^{s}_{0}\left\{-\tau \nabla_x\nu + \nabla_v\nu\right\}(t-\tau, x-\tau v,v)\mathrm{d}\tau\}H(t-s,x-vs,v) \ud s
\end{split}
\end{equation}

 Therefore, we have{ 
\begin{equation}
\begin{split}
\|\nabla_{x}f(t)\1_{\{t\neq t_{\mathbf{b}}\}}\|_1  \lesssim& \ \|\nabla_{x}f_{0}\|_1 
+ t \{|| e^{\theta |v|^{2}} f_{0} ||_{\infty}+|| e^{\theta |v|^{2}} g ||_{\infty}  \}
 \\
 &  + \int_{0}^{t}  \Big|
  \sum_{i=1}^2
 \tau_i \partial_{\tau_i} g - \frac{n }{v\cdot n } \big\{\partial_t g +  \sum_{i=1}^2 (v\cdot \tau_i) \partial_{\tau_i}g +\nu g -H \big\}
   \Big|_{\gamma_-,1}\\
   &
+   \int_{0}^{t} \|\nabla_{x}H(s) \|_1 
+ \int_{0}^{t} s   ||e^{\theta|v|^{2}}H(s)||_{\infty} 
\end{split}
\end{equation}

\begin{equation}\notag
\begin{split}
\|\nabla_{v}f(t)\1_{\{t\neq t_{\mathbf{b}}\}}\|_1  \lesssim& \ t \|\nabla_{x}f_{0}\|_1  + \|\nabla_{v}f_{0}\|_1  +  
 t   
 \|e^{\theta |v|^2}f_{0}\|_\infty     \\
+ & t  \int_{0}^{t}   \Big|
\Big\{ \sum_{i=1}^2
 \tau_i \partial_{\tau_i} g - \frac{n }{v\cdot n } \big\{\partial_t g +  \sum_{i=1}^2 (v\cdot \tau_i) \partial_{\tau_i}g +\nu g -H \big\}
\Big\}
  \Big|_{\gamma_{-},1}    \\
+ &  \int_{0}^{t}  |\nabla_{v}g|_{\gamma_{-},1}  +  t^2 \sup_{0\le s\le t}|e^{\theta |v|^2}g(s)|_{\gamma_{-},\infty}\\
   +&   \int_{0}^{t} \|\nabla_{x}H\|_1
 +    \int_{0}^{t} \|\nabla_{v}H\|_1   + C \int_{0}^{t} \|e^{\theta |v|^2}H\|_\infty    .
\end{split}
\end{equation}}
From our assumption, $f_0, \ g,$ and $H$ have compact supports and the RHS are bounded. Therefore
\[
\partial f \mathbf{1}_{\{t\neq t_{\mathbf{b}}\}} = [\partial_t f \mathbf{1}_{\{t\neq t_{\mathbf{b}}\}}, \ \nabla_x f \mathbf{1}_{\{t\neq t_{\mathbf{b}}\}}, \ \nabla_v f \mathbf{1}_{\{t\neq t_{\mathbf{b}}\}}] \ {\in \ L^\infty ([0,T]; L^1(\Omega\times\mathbb{R}^3))}.
\]

Since $\partial f \equiv 0$ around $\{t=t_{\mathbf{b}}\}$ clearly $\partial f \mathbf{1}_{\{t\neq t_{\mathbf{b}}\}}$ is the distributional derivative of $f$.{ Therefore $\nabla_x f$ and $\nabla_v f$ lie in $L^\infty([0,T];L^1(\Omega\times\R^3))$; this allows us to apply Lemma \ref{le:ukai} to compute the traces on the incoming boundary in $L^1([0,T];L^1(\gamma_-,\ud \gamma))$ (by taking limits of the flow along the characteristics: see the proof of Proposition 1 in \cite{GKTT} for details). Then, by Green's identity (Lemma \ref{Green}) we know that $\nabla_x f$ and $\nabla_v f$ lie in $C^0([0,T];L^1(\Omega\times\R^3))$ and we get \eqref{global_x} and \eqref{global_v}.

}

\end{proof}

Now we are ready to prove the main theorem.

\begin{proof}[\textbf{Proof of Theorem 1}]
For $f_{0} \in BV(\Omega\times \mathbb{R}^{3})$ and $|| e^{\theta|v|^{2}}f_{0}||_{\infty}<\infty$ we choose $f_{0}^{\varepsilon}\in BV(\Omega\times\mathbb{R}^{3}) \cap C^{\infty}(\Omega\times\mathbb{R}^{3})$ satisfying $|| e^{\theta|v|^{2} } [ f_{0}^{\varepsilon}- f_{0} ] ||_{\infty}\rightarrow 0$ and $|| \nabla_{x,v} f^{\varepsilon}_{0}||_{1} \rightarrow ||  f_{0}||_{\tilde{BV}}$.  

Consider the sequence {$f^{\varepsilon, m}$ defined by $f^{\varepsilon,0}=$ and for all $m\ge0$,} 
\begin{equation}\label{ep_m}
\begin{split}
\partial_{t} f^{\varepsilon, m+1} + v\cdot \nabla_{x} f^{\varepsilon, m+1} +
\nu(\sqrt{\mu} f^{\varepsilon, m}) f^{\varepsilon, m+1}  = \chi_{\varepsilon}\Gamma_{\mathrm{gain}} (f^{\varepsilon, m}, f^{\varepsilon,m}),& \ \ \text{in} \ \ \Omega \times \mathbb{R}^{3},\\
 f^{\varepsilon, m} (0,x,v)  = \chi_{\varepsilon} f_{0}^{\varepsilon} (x,v) ,& \ \ \text{in} \ \ \Omega \times \mathbb{R}^{3},\\
f^{\varepsilon, m+1}(t,x,v)  = \chi_{\varepsilon}(x,v) c_{\mu } \sqrt{\mu(v)} \int_{n(x)\cdot u>0} f^{\varepsilon,m} (t,x,u) \sqrt{\mu} \{ n\cdot u\} \mathrm{d}u,&  \ \ \text{on} \ \ \gamma_{-},
\end{split}
\end{equation}
where $\chi_{\varepsilon}$ is {defined in} (\ref{cutoff}). Clearly for {a fixed} $0<\varepsilon \ll 1,$ {$(f^{\varepsilon,m})_m$ is Cauchy for the norm $\sup_{0 \leq t \leq T}||e^{\theta|v|^{2}} \cdot||_{\infty}$ for $0 < \theta < \frac{1}{4}$ and some $0<T\ll 1$} (Lemma 6 of \cite{GKTT}). Therefore $f^{\varepsilon, m} \rightarrow f^{\varepsilon}$ {for the norm} $\sup_{0 \leq t \leq T}||e^{\theta|v|^{2}} \cdot||_{\infty} $ and $f^{\varepsilon}$ satisfies (\ref{ep_m}) {with $f^{\varepsilon, m+1}$ and $f^{\varepsilon, m}$ replaced by $f^{\varepsilon}$}.
Since $|\chi_{\varepsilon}|\leq 1$ for $0< \varepsilon \ll1 $, $\sup_{0 \leq t \leq T}|| e^{\theta|v|^{2}}f^{\varepsilon}(t)||_{\infty}$ is uniformly {bounded in $\varepsilon$} for $0<\varepsilon \ll 1$ and $0 < T \ll 1.$ Therefore $f^{\varepsilon}\rightarrow f$ weak$-*$ up to {a subsequence} and the limiting function $f$ solves the Boltzmann equation with the diffuse boundary condition in the sense of {distributions}.

Now we consider the derivatives of {the solution $f^{\varepsilon, m}$ of (\ref{ep_m})}. Due to the smooth approximation $f_{0}^{\varepsilon}$ of the initial datum $f_{0}$ and the cut-off $\chi_{\varepsilon}$, {$f^{\varepsilon,m}$ is smooth} by Lemma \ref{inflow_linear}. We take derivatives $\partial \in \{\partial_{x}, \partial_{v}\}$ to have
\begin{equation}\notag
\begin{split}
&\big[\partial_{t} + v\cdot \nabla_{x} + \nu( \sqrt{\mu} f^{\varepsilon,m})\big] \partial f^{\varepsilon, m+1}\\
& =  - \partial v \cdot \nabla_{x} f^{\varepsilon, m+1} - \nu (\partial [\sqrt{\mu} f^{\varepsilon,m}]) f^{\varepsilon, m+1}
 +O(1) e^{-\theta |v|^{2}}\partial \nu || e^{\theta|v|^{2}} f^{\e, m}||_{\infty} || e^{\theta|v|^{2}} f^{\e, m+1}||_{\infty}
 \\
& \ \ \ \ \ \ +\partial \chi_{\varepsilon} \Gamma_{\mathrm{gain}}(f^{\varepsilon,m}, f^{\varepsilon,m}) + \chi_{\varepsilon} \partial [  \Gamma_{\mathrm{gain}} ( f^{\varepsilon,m} ,   f^{\varepsilon,m}) ] ,\\
&\partial f^{ \varepsilon,m+1} (0,x,v) =   \partial \chi_{\varepsilon} f_{0}^{\varepsilon} (x,v) + \chi_{\varepsilon} \partial f_{0}^{\varepsilon} (x,v), 
\end{split}
\end{equation} 
and, {for all $(x,v)\in\gamma_{-},$}
\begin{equation}\notag
\begin{split} |\partial f^{\varepsilon,m+1} (t,x,v)| \lesssim& \ \sqrt{\mu(v)} \left(1+ \frac{\langle
v\rangle }{|n(x)\cdot v|}\right) \int_{n(x)\cdot u>0} |\partial f^{\varepsilon,m }  (t,x,u)|
\mu(u)^{\frac{1}{4}} \{n(x)\cdot u\} \mathrm{d}u\\
& + \frac{\langle
v\rangle^{\kappa } e^{-C_{\theta} |v|^{2}}}{|n(x)\cdot v|} || e^{\theta |v|^{2}} f_0||_\infty 
 + |\partial \chi_{\varepsilon} (x,v) \sqrt{\mu (v)}| P( || e^{\theta |v|^{2}} f_0||_\infty ),
\end{split}
\end{equation} for some polynomial $P$. 
Then by Proposition \ref{inflow_linear} and $\sqrt{\mu}f^{\varepsilon,m} \geq 0,$
\begin{equation}\label{energy_est}
\begin{split}
& || \partial f^{\varepsilon, m+1}(t) ||_{1} + \int_{0}^{t} |\partial f^{\varepsilon, m+1}(s) |_{\gamma_{+},1}\\
 \lesssim & \ 
|| e^{-\theta^{\prime} |v|^{2}}\partial \chi_{\varepsilon} ||_{1} || e^{\theta^{\prime}|v|^{2}} f_{0} ||_{\infty} + || \partial f_{0}^{\varepsilon}  ||_{1} + \int_{0}^{t} |\partial f^{\varepsilon, m+1}(s) |_{\gamma_{-}}
  \\
 &+\int_{0}^{t} || \partial f^{\varepsilon, m+1}(s) ||_{1} \mathrm{d}s    +  P ( || e^{\theta |v|^{2}} f^{\varepsilon,m} ||_{\infty})\\
 & \ \ \ \ \ \times \Big\{ t+  t   \iint_{\Omega\times \mathbb{R}^{3}} 
  e^{-C_{\theta }|v|^{2}}|\partial \chi_{\varepsilon}  |      + \int_{0}^{t} || \partial f^{\varepsilon, m }(s) ||_{1} \mathrm{d}s  \Big\},
\end{split}
\end{equation}
where we have used Lemma 5 of \cite{GKTT} {: for any function $g=g(x,v)$}
\begin{eqnarray*}
&&|| \partial  \Gamma_{\mathrm{gain}}(g , g )  ||_{1}\\
 &\lesssim  &
|| e^{\theta|v|^{2}} g||_{\infty} \Big\{ |\partial x| || \nabla_{x} g ||_{1} + |\partial v| || \nabla_{v} g ||_{1}   \Big\} + \langle v\rangle^{\kappa} e^{-\theta|v|^{2}} |\partial v| || e^{\theta|v|^{2}} g||_{\infty}^{2},\\
&&\iint_{\Omega \times \mathbb{R}^{3}} | \nu(  \partial [\sqrt{\mu} g]  )   g | \mathrm{d}v \mathrm{d}x \\& \lesssim &
|| e^{\theta |v|^{2}} g  ||_{\infty} \int_{\Omega} 
\int_{\mathbb{R}^{3}}
\int_{\mathbb{R}^{3}} e^{- \frac{\theta}{4} |v-u|^{2}} |\partial g  (u)| \mathrm{d}u
\mathrm{d}v
\mathrm{d}x  \lesssim  || e^{\theta |v|^{2}} g  ||_{\infty}  || \partial g  ||_{1}.
\end{eqnarray*}
 
Applying Lemma \ref{cut_off} and Lemma \ref{small_boundary_lemma} to (\ref{energy_est}), we obtain
\begin{equation}\label{energy_est_1}
\begin{split}
& || \partial f^{\varepsilon, m+1}(t) ||_{1} + \int_{0}^{t} |\partial f^{\varepsilon, m+1}(s) |_{\gamma_{+},1}\\
 \lesssim & \ 
 || e^{\theta^{\prime}|v|^{2}} f_{0} ||_{\infty} + ||   f_{0}  ||_{BV} + \int_{0}^{t} |\partial f^{\varepsilon, m+1}(s) |_{\gamma_{-},1}\\
& +  t[1+  P ( || e^{\theta^{\prime} |v|^{2}} f_{0} ||_{\infty})]\sup_{0 \leq s \leq t}|| \partial f^{\varepsilon, m+1}(s) ||_{1} \mathrm{d}s    +  tP ( || e^{\theta^{\prime} |v|^{2}} f_{0}||_{\infty}).
\end{split}
\end{equation}
On the other hand, we apply {Proposition \ref{double_iteration} and Lemma \ref{le:ukai}} to bound
 \begin{equation}\label{boundary_nonlin}
 \begin{split}
  \int_{0}^{t} |\partial f^{\varepsilon, m+1} |_{\gamma_{-},1}  
&  \lesssim   O(\delta) \int_{0}^{t} |\partial f^{\varepsilon,m-1 } |_{\gamma_{+},1}  + C_{ \delta}  \{ || f_{0} ||_{BV} + t P(|| e^{\theta^{\prime} |v|^{2}} f_{0} ||_{\infty}  )\}\\
  & \ + C_{ \delta} t [1+ P(|| e^{\theta^{\prime} |v|^{2}} f_{0} ||_{\infty}  )   ] \max_{i=m,m-1} \sup_{0 \leq s \leq t} || \partial f^{\varepsilon, i} (s)||_{1}.
   \end{split}
 \end{equation}
 
 Finally from (\ref{energy_est_1}) and (\ref{boundary_nonlin}), {chosing $\delta \ll 1$ and $T:=T(f_0)$ small enough, we have for all $0\le t\le T$}
 \begin{equation}\notag
 \begin{split}
 &\sup_{0 \leq s \leq t}|| \partial f^{\varepsilon, m+1}(s) ||_{1} + \int_{0}^{t} |\partial f^{\varepsilon, m+1}(s)|_{\gamma ,1} \\
 \leq & \ C  {\{|| f_{0} ||_{BV} +P ( || e^{\theta^{\prime} |v|^{2}} f_{0}||_{\infty}) \}}  \\  + & \frac{1}{8}\max_{i=m,m-1} \Big\{   \sup_{0 \leq s \leq t} ||\partial f^{\varepsilon, i} (s) ||_{1}+ \int_{0}^{t} |\partial f^{\varepsilon, i}|_{\gamma ,1}  \Big\}.
 \end{split}
 \end{equation}
Now we use the fact from \cite{EGKM}: Suppose $a_{i}\geq 0, D \geq 0$ and $A_{i} = \max \{a_{i},\cdots, a_{i-(k-1)}\}$ for fixed $k \in\mathbb{N}$. If $a_{m+1} \leq \frac{1}{8} A_{m} + D$ then $A_{m} \leq \frac{1}{8} A_{0} + \left(\frac{8}{7}\right)^{2} D$ for $m/k \gg 1.$ Hence
\begin{equation}\label{uniform_f}
  \sup_{0 \leq s \leq t} || \partial f ^{\varepsilon, m} (s) ||_{1} + 
\int_{0}^{t} |\partial f^{\varepsilon,m}(s)|_{\gamma,1}
\  \lesssim \ || f_{0}||_{BV}   + P (|| e^{\theta|v|^{2}}  f_{0} ||_{\infty})
 \ \ \ \ \text{for all } m\in\mathbb{N}.
\end{equation}

Now we pass the {to limit in $m$ and then in $\varepsilon$} to conclude the main theorem. It is standard that $BV(\Omega\times\mathbb{R}^{3})$ has  
$i)$ a \textit{compactness} property: 
\begin{equation}\label{compact_bulk}
\begin{split}
&\text{Suppose } f^{k} \in BV  \text{ and } \sup_{k}||f^{k} ||_{{BV}}< \infty \\ & \ \ \ \ \ \ \ \ \ \ \ \ \ \ \ \text{ then } \exists \ f\in BV \text{ with } f^{k } \rightarrow f \text{ in } \ L^{1}   \text{ up to subsequence,}
\end{split}
\end{equation}
and $ii)$ a \textit{lower semicontinuity} property:
\begin{equation}\label{lower_semi_bulk}
\text{Suppose } f^{k} \in   BV  \text{ and }  f^{k} \rightarrow f \text{ in }  L^{1}_{loc}   \  \text{ then }  \ 
|| f  ||_{\tilde{BV}} \leq \liminf_{k \rightarrow \infty} || f^{k} ||_{\tilde{BV}}. 
\end{equation}
Applying (\ref{compact_bulk}) and (\ref{lower_semi_bulk}) we conclude
\begin{equation}\notag
  \sup_{0 \leq s \leq t} ||   f   (s) ||_{BV}  
\  \lesssim \ || f_{0}||_{BV}   + P (|| e^{\theta|v|^{2}}  f_{0} ||_{\infty}) .
\end{equation}

For the boundary term we use the \textit{weak compactness of measures}: If $\sigma^{k}$ is a signed Radon measures on $\partial\Omega \times \mathbb{R}^{3}$ satisfying $\sup_{k} \sigma^{k}(\partial\Omega \times \mathbb{R}^{3})<\infty$ then there exists a Radon measure $\sigma$ such that $\sigma^{k}\rightharpoonup \sigma$ in $\mathcal{M}$.

More precisely we define, for any Lebesgue-measurable set $A\subset \partial\Omega \times \mathbb{R}^{3}$,  
\begin{eqnarray*}
\sigma^{\varepsilon, m} (A) &=&
 \Big(    \sigma_{x^{1}}^{\varepsilon, m} (A) ,
  \sigma_{x^{2}}^{\varepsilon, m} (A),
   \sigma_{x^{3}}^{\varepsilon, m} (A),
    \sigma_{v^{1}}^{\varepsilon, m} (A) ,
  \sigma_{v^{2}}^{\varepsilon, m} (A),
   \sigma_{v^{3}}^{\varepsilon, m} (A) 
 \Big)^{T}\\
  &: =& \int_{A} \nabla_{x,v} f^{\varepsilon,m}  \mathrm{d}\gamma
  \in \mathbb{R}^{6}.
\end{eqnarray*}
Then there exists a Radon measure $\sigma$ such that $\sigma^{\varepsilon,m} \rightharpoonup \sigma$ in $\mathcal{M}$, i.e. 
\begin{equation}\label{weak_measure}
\int_{\partial\Omega \times \mathbb{R}^{3}} g \partial f^{\varepsilon, m} \mathrm{d}\gamma\rightarrow   \int_{\partial\Omega \times \mathbb{R}^{3}} g \mathrm{d}\sigma \ \ \  \text{ for all } \ g \in C^{0}_{c}(\partial\Omega \times \mathbb{R}^{3}).
\end{equation}
It is standard (Hahn's decomposition theorem) to decompose $\sigma= \sigma_{+} -  \sigma_{-}$ with $\sigma_{\pm} \geq 0.$ Denote $|\sigma|_{\mathcal{M}(\gamma)} =  \sigma_{+}(\partial\Omega \times \mathbb{R}^{3}) +  \sigma_{-}(\partial\Omega \times \mathbb{R}^{3}).$ Then by the lower semicontinuity property of measures we have $| \sigma|_{\mathcal{M}(\gamma)} \leq \liminf | \sigma^{\varepsilon, m} |_{\mathcal{M}(\gamma)}= \liminf | \partial f^{\varepsilon,m} |_{L^{1}(\gamma)} \lesssim || f_{0}||_{BV} + P(|| e^{\theta |v|^{2}} f_{0} ||_{\infty}).$ Due to (\ref{weak_measure}), the (distributional) derivatives $\nabla_{x,v} f|_{\gamma}$ equals the Radon measure $\sigma$ on $\partial\Omega\times \mathbb{R}^{3}$ in the sense of distribution.

\end{proof}

\section*{{Appendix. $\mathfrak{S}_{\mathrm{B}}$ is a Co-Dimension 1 subset}}
We prove \textbf{Remark 1}. It suffices to show that $\mathfrak{S}_{\mathrm{B}} \cap \bar{\Omega} \times \mathbb{R}^{3}$ is a co-dimension 1 submanifold of $\bar{\Omega } \times \mathbb{R}^{3}$. More precisely we will show that if $(x_{0},v_{0}) \in \Omega \times \mathbb{R}^{3}$ satisfies $n(x_{\mathbf{b}}(x_{0},v_{0}))\cdot v_{0} =0$ and the boundary is strictly non-convex (\ref{nonconvex}) at $(x_{\mathbf{b}}(x_{0},v_{0}),v_{0})$ then there exists $0<\varepsilon\ll 1$ such that the following set is a 5 dimensional submanifold:
\begin{equation}\label{singularset3}
\big\{ (x,v) \in 
\mathfrak{S}_{\mathrm{B}}\cap B((x_{0},v_{0});\varepsilon):
\xb(x,v) \sim \xb(x_{0},v_{0})
\big\} \subset \bar{\Omega} \times \mathbb{R}^{3}.
\end{equation}

Without loss of generality we may assume $\xb(x_{0},v_{0}) =(0,0,0)= \mathbf{0}$ and $v_{0} = \mathbf{e}_{1}$ and $n(0,0,0) = - \mathbf{e}_{3}$ so that $\partial\Omega$ is locally a graph of a function $\eta : \mathbb{R}^{2} \rightarrow \mathbb{R}$ and $\nabla \eta (0,0)=\mathbf{0}$. Therefore the strictly non-convex condition (\ref{nonconvex}) at $(\xb(x_{0},v_{0}), v_{0})=(\mathbf{0}, \mathbf{e}_{1})$ implies 
\begin{eqnarray} 
\partial_{1}\partial_{1} \eta (0,0) \neq0.\label{nonconvex2} 
\end{eqnarray}

Clearly, $(\ref{singularset3})$ is contained in
\begin{eqnarray} 
\big\{(x+s v,v) \in \bar{\Omega}\times\mathbb{R}^3: x\in \partial\Omega, \ n(x) \cdot v =0 , \  (x,v)\sim ({x}_{0},{v}_{0}), \ s \in [0,\infty)\big\}.\label{singularset2} 
\end{eqnarray}

Consider $(x,v) \sim (x_{0},v_{0})$. We choose two basis for the tangent space:
\begin{eqnarray*}
{\tau}_1 & =&  \frac{1}{\sqrt{1+|\nabla {\eta}|^2}} \left(\begin{array}{ccc} 1 \\ 0 \\ \partial_{1}{\eta}\end{array}\right), \\
{\tau}_2  & =&  \frac{1}{\sqrt{1+|\nabla{\eta}|^2 }\sqrt{1+(\partial_{1}{\eta})^2}} \left(\begin{array}{ccc}
-\partial_{1} {\eta} \partial_{2} {\eta} \\
1+ (\partial_{1}{\eta})^2 \\
\partial_{2} {\eta} \end{array}\right).\end{eqnarray*}

For $( {x}_1, {x}_2, \theta, r_{v},s) \in \mathbb{R}^2 \times [0,2\pi)\times [0,\infty)\times [0,\infty)$ we write $(x+sv,v)$ in (\ref{singularset2}) as
\begin{equation}\nonumber
\begin{split}
 & {X}( {x}_1, {x}_2, \theta, r_{v},s) :=  \left(\begin{array}{ccc}  {x}_1 \\  {x}_2 \\  {\eta}( {x_1}, {x}_2)
\end{array}\right) +s r_{v} \cos\theta \  {\tau}_1 ( {x}_1, {x}_2) +s r_{v} \sin\theta \  {\tau}_2( {x}_1,  {x}_2),\\
&  {V}( {x}_1, {x}_2, \theta, r_{v},s):= r_{v}  \cos\theta \  {\tau}_1 ( {x}_1, {x}_2) + r_{v} \sin\theta \  {\tau}_2 ( {x}_1, {x}_2).
\end{split}
\end{equation}
In order to prove Remark 1 it suffices to show that the followings are linearly independent
\begin{equation}\nonumber
\begin{split}
 \binom{ \partial_{ {x}_1} {X}}{  \partial_{ {x}_1} {V}}, \ \binom{ \partial_{ {x}_2} {X}}{ \partial_{ {x}_2} {V}}, \ \binom{ \partial_\theta  {X}}{  \partial_\theta  {V}}
 ,  \ \binom{ \partial_s  {X}}{ \partial_s  {V}}, \ \binom{ \partial_{ r_{v}} {X}}{ \partial_{ r_{v}} {V}} \in \mathbb{R}^{6}.
\end{split}
\end{equation}
It suffices to show that the normal is non-vanishing:
\begin{equation}\nonumber
\begin{split}
\mathcal{N}:=   \det\left(\begin{array}{cccccc}
\mathbf{e}_1 & \mathbf{e}_2 & \mathbf{e}_3 & \mathbf{e}_4 & \mathbf{e}_5 & \mathbf{e}_6\\
\partial_{ {x}_1}  {X}_1  & \partial_{ {x} _1}  {X}_2& \partial_{  {x}_1}  {X}_3 & \partial_{ {x}_1}   {V}_1 & \partial_{  {x}_1}  {V}_2 & \partial_{  {x}_1}  {V}_3\\
\partial_{ {x}_2} {X}_1  & \partial_{  {x}_2}  {X}_2& \partial_{  {x}_2}  {X}_3 & \partial_{  {x}_2}   {V}_1 & \partial_{  {x}_2}  {V}_2 & \partial_{  {x}_2}  {V}_3\\
\partial_\theta {X}_1  & \partial_\theta  {X}_2& \partial_\theta {X}_3 & \partial_{\theta}  {V}_1 & \partial_{\theta}  {V}_2 & \partial_{\theta}  {V}_3\\
\partial_s  {X}_1 & \partial_{s}  {X}_2 & \partial_{s}  {X}_3 & 0 &  0 & 0\\
 \partial_{r_{v}}  {X}_1 & \partial_{r_{v}}  {X}_2 & \partial_{r_{v}}  {X}_3 & \partial_{r_{v}}  {V}_1 & \partial_{r_{v}}  {V}_2 & \partial_{r_{v}}  {V}_3
\end{array}\right).
\end{split}
\end{equation}

To compute the normal we need to know
\begin{equation}\nonumber
\begin{split}
\partial_{1} {\tau}_1( {x}_1, {x}_2) &= \frac{\partial_{1}^2 {\eta}}{[1+(\nabla{\eta})^2]^{3/2}} \left(\begin{array}{ccc} -\partial_{1} {\eta} \\ 0 \\ 1 \end{array}\right)
+ \frac{\partial_{2} \eta}{[1+( \nabla{\eta})^2]^{3/2}} 
\left(\begin{array}{c}
0 \\ 0 \\ \partial_{2} \eta \partial_{1}^{2} \eta - \partial_{1} \eta \partial_{1} \partial_{2} \eta
\end{array}
\right)
,\\
\partial_{2} {\tau}_1({x}_1,{x}_2) &=
\frac{1}{ [1+( \nabla\eta)^2]^{1/2} } \left(\begin{array}{c}0\\ 0 \\ \partial_{1} \partial_{2} \eta \end{array} \right) 
- \frac{1}{ [1+( \nabla\eta)^2]^{3/2}}
\left(\begin{array}{c} 
\nabla \eta \cdot \nabla \partial_{2} \eta \\ 0 \\ \partial_{1} \eta \nabla \eta \cdot \nabla \partial_{2} \eta
\end{array}\right)
,\\
\partial_{1}({\tau}_2)_1 & = \frac{(\partial_{1}{\eta})^2 \partial_{2}{\eta} \partial_{1}^2 {\eta}}{[1+(\partial_{1}{\eta})^2]^{3/2}[1+|\nabla {\eta}|^2]^{1/2}} + \frac{ (\partial_{1}{\eta})^2 \partial_{2}{\eta} \partial_{1}^2 {\eta} + \partial_{1} {\eta} (\partial_{2}{\eta})^2 \partial_{1}\partial_{2}{\eta} }{[1+(\partial_{1}{\eta})^2]^{1/2}[1+|\nabla {\eta}|^2]^{3/2}}\\
& \  - \frac{ \partial_{1}^2 {\eta} \partial_{2} {\eta} + \partial_{1} {\eta} \partial_{1}\partial_{2}{\eta} }{[1+(\partial_{1}{\eta})^2]^{1/2}[1+|\nabla {\eta}|^2]^{1/2}},\\
\partial_{2}({\tau}_2)_1 & = \frac{\partial_{1}{\eta} \partial_{2}{\eta} \partial_{1}\partial_{2}{\eta}}{[1+(\partial_{1}{\eta})^2]^{3/2}[1+|\nabla {\eta}|^2]^{1/2}} + \frac{ (\partial_{1}{\eta})^2 \partial_{2}{\eta} \partial_{1}\partial_{2} {\eta} + \partial_{1} {\eta} (\partial_{2}{\eta})^2 \partial_{2}^2{\eta} }{[1+(\partial_{1}{\eta})^2]^{1/2}[1+|\nabla {\eta}|^2]^{3/2}}\\
& \  - \frac{ \partial_{1}\partial_{2} {\eta} \partial_{2} {\eta} + \partial_{1} {\eta} \partial_{2}^2{\eta} }{[1+(\partial_{1}{\eta})^2]^{1/2}[1+|\nabla {\eta}|^2]^{1/2}},\\
\partial_{1}(\tau_2)_2 & = \frac{\partial_{1}{\eta} \partial_{1}^2 {\eta}}{[1+(\partial_{1}{\eta})^2]^{1/2}[1+|\nabla {\eta}|^2]^{1/2}} - \frac{[1+(\partial_{1}{\eta})^2]^{1/2}[ \partial_{1} {\eta} \partial_{1}^2 {\eta} + \partial_{2}{\eta} \partial_{1}\partial_{2}{\eta} ]}{[1+|\nabla {\eta}|^2]^{3/2}},\\
\partial_{2}(\tau_2)_2 & = \frac{\partial_{1}{\eta} \partial_{1}\partial_{2} {\eta}}{[1+(\partial_{1}{\eta})^2]^{1/2}[1+|\nabla {\eta}|^2]^{1/2}} - \frac{[1+(\partial_{1}{\eta})^2]^{1/2}[ \partial_{1} {\eta} \partial_{1}\partial_{2} {\eta} + \partial_{2}{\eta} \partial_{2}^2{\eta}] }{[1+|\nabla {\eta}|^2]^{3/2}},\\
\partial_{1}(\tau_2)_3 & = -\frac{\partial_{1}{\eta} \partial_{2}{\eta} \partial_{1}^2{\eta}}{[1+(\partial_{1}{\eta})^2]^{3/2}[1+|\nabla {\eta}|^2]^{1/2}} - \frac{ \partial_{1}{\eta}  \partial_{2}{\eta} \partial_{1}^2 {\eta} + (\partial_{2}{\eta})^2 \partial_{1}\partial_{2}{\eta} }{[1+(\partial_{1}{\eta})^2]^{1/2}[1+|\nabla {\eta}|^2]^{3/2}}\\
& \  + \frac{ \partial_{1}\partial_{2} {\eta} }{[1+(\partial_{1}{\eta})^2]^{1/2}[1+|\nabla {\eta}|^2]^{1/2}},\\
\partial_{2}(\tau_2)_3 & = -\frac{\partial_{1}{\eta} \partial_{2}{\eta} \partial_{2}^2{\eta}}{[1+(\partial_{1}{\eta})^2]^{3/2}[1+|\nabla {\eta}|^2]^{1/2}} - \frac{ \partial_{1}{\eta}  \partial_{2}{\eta} \partial_{1}\partial_{2} {\eta} + (\partial_{2}{\eta})^2 \partial_{2}^2{\eta} }{[1+(\partial_{1}{\eta})^2]^{1/2}[1+|\nabla {\eta}|^2]^{3/2}}\\
& \  + \frac{  \partial_{2}^2 {\eta} }{[1+(\partial_{1}{\eta})^2]^{1/2}[1+|\nabla {\eta}|^2]^{1/2}}.
\end{split}
\end{equation}

We evaluate the normal at $({x}_1,{x}_2,\theta,s,r_{v})=(0,0,0, s,r_{v})$. Since $\partial_{1}{\eta}(0,0)=0= {\partial}_{2} \eta(0, 0)$,
\begin{equation}\nonumber
\begin{split}
&n(0, 0) = \mathbf{e}_3, \ \ {\tau}_1(0, 0) = \mathbf{e}_1, \ \ {\tau}_2(0, 0) = \mathbf{e}_2, \\
& \partial_{1} {\tau}_1(0, 0) = \partial_{1} \partial_{1} {\eta}(0, 0) \mathbf{e}_3, \ \ \partial_{2} {\tau}_1(0, 0)= \partial_{1} \partial_{2} {\eta} (0, 0) \mathbf{e}_3,\\
& \partial_{1} {\tau}_2(0,0)= \partial_{1} \partial_{2} {\eta}(0,0) \mathbf{e}_3 , \ \ \partial_{2} {\tau}_2(0,0) = \partial_{2} \partial_{2} \eta(0, 0) \mathbf{e}_3.
\end{split}
\end{equation}
Due to (\ref{nonconvex2}) we have
\begin{equation}\notag
\begin{split}
{\mathcal{N}}(0,0,0,s, r_{v})  =\det \left(\begin{array}{cccccc}
\mathbf{e}_1 & \mathbf{e}_2 & \mathbf{e}_3 & \mathbf{e}_4 & \mathbf{e}_5 & \mathbf{e}_6\\
1 & 0 & -s\partial_{ 1} \partial_{1} {\eta} & 0 & 0 & -r_{v} \partial_{1} \partial_{1} {\eta}\\
0 & 1 & -s \partial_{1}\partial_{2} {\eta} & 0 & 0 &  -r_{v}\partial_{1}\partial_{2}{\eta}
\\
0 & s& 0 & 0 & r_{v} & 0 \\
r_{v} & 0 & 0 & 0 & 0 & 0\\
s & 0 & 0 & 1 & 0 & 0
\end{array}\right) 
 =
 \left(\begin{array}{cccc}
0 \\ 0 \\ r_{v}^2 \partial_{1}  \partial_{1}  {\eta}(0,0) \\ 0 \\ 0 \\ sr_{v} \partial_{1}  \partial_{1}  {\eta}(0,0)
\end{array}\right) \neq 0.
\end{split}
\end{equation}
Therefore $\mathcal{N}(x_{1},x_{2}, \theta, s, r_{v}) \neq 0$ for $(x_{1},x_{2},\theta) \sim (0,0,0)$. This proves the claim.

\bigskip

\noindent \textbf{Acknowledgements}: 
This project was initiated during the Kinetic Program at ICERM, 2011. Y. Guo's research is supported in part by NSFC grant $\#$10828103 and NSF grant $\#$DMS-1209437. C.Kim's research is supported in part by the Herchel Smith fund at the University of Cambridge. He thanks Brown University and the Academia Sinica for the kind hospitality and support during his stay. A. Trescases thanks the Division of Applied Mathematics, Brown University for the kind hospitality during her visit.

\vspace{20pt}
 
\noindent{Division of Applied Mathematics, Brown University,
Providence, RI 02812, U.S.A., Yan$\underline{~}$Guo@brown.edu }\newline

\noindent{Department of Mathematics,
University of Wisconsin, Madison, WI 53706, U.S.A, ckim@math.wisc.edu}\newline

\noindent{{CEREMADE, Universit\'e Paris Dauphine, Place du Mar\'echal De Lattre De Tassigny, 75775,  Paris, France, tonon@ceremade.dauphine.fr} \newline}

\noindent{CMLA, ENS Cachan, 61, avenue du Pr\'{e}sident Wilson, 94235
Cachan, France, trescase@cmla.ens-cachan.fr}

 \end{document}